\newtheorem{theorem}{Theorem}
\newtheorem{proposition}[theorem]{Proposition}
\newtheorem{lemma}[theorem]{Lemma}
\newtheorem{claim}{Claim}
\newcommand{\mb}[1]{\mathbb{#1}}
\def\hM{\hat{M}}
\def\Nbb{N_{\bullet\bullet}}
\def\Nbw{N_{\bullet\circ}}
\def\Nwb{N_{\circ\bullet}}
\def\Nww{N_{\circ\circ}}
\def\Sbw{S_{\bullet\circ}}
\def\Sww{S_{\circ\circ}}
\def\Sbb{S_{\bullet\bullet}}
\def\C2{C^{(2)}}
\def\Rb{R_{\bullet}} 
\def\Rw{R_{\circ}}
\def\rb{r_{\bullet}} 
\def\rw{r_{\circ}}
\def\Minf{M^{\infty}}
\def\cB{\mathcal{B}}
\def\cH{\mathcal{H}}
\def\cO{\mathcal{O}}
\def\cD{\mathcal{D}}
\def\cQ{\mathcal{Q}}
\def\cT{\mathcal{T}}
\def\cU{\mathcal{U}}
\def\cX{\mathcal{X}}
\def\cUbal{\cU^{\mathrm{Bal}}}
\def\cXbal{\cX^{\mathrm{Bal}}}
\def\cTri{\cT^{(3)}}
\def\cUbaltri{\cU^{\mathrm{Bal},3}}
\def\cHtri{\cH^{(3)}}
\def\cUh{\widehat{\cU}}
\def\zb{z_{\bullet}}
\def\zw{z_{\circ}}
\def\tb{t_{\bullet}}
\def\tw{t_{\circ}}
\def\zZ{\mathbb{Z}}
\def\Pi{P^{(i)}}
\def\Cbbi{C_{\bullet\bullet}^{(i)}}
\def\Cbwi{C_{\bullet\circ}^{(i)}}
\def\Cwwi{C_{\circ\circ}^{(i)}}
\newcounter{sclaim}
\newcounter{ssclaim}
\newenvironment{proof}{\noindent \setcounter{ssclaim}{0}\emph{Proof.}\ }{\hfill
    $\Box$\vspace{1em}}
        \title{A bijection for essentially 3-connected toroidal maps
          \thanks{This work was
      supported by the ANR grant GATO
      ANR-16-CE40-0009-01.}}
\author{Nicolas Bonichon\thanks{LaBRI UMR 5800, Universit\'e Bordeaux, 351 cours de la Lib\'eration 33405 Talence Cedex, France.  \texttt{bonichon@labri.fr}}\ ,  
\'Eric Fusy\thanks{LIGM UMR 8049, Universit\'e Gustave Eiffel,   
5 boulevard Descartes, 77454 Marne-la-Vallée Cedex 2, France. \texttt{eric.fusy@u-pem.fr}}\ ,
Benjamin
  L\'ev\^eque\thanks{G-SCOP UMR 5272, Universit\'e Grenoble Alpes, 46 avenue F\'elix Viallet 
38031 Grenoble Cedex 1, France. \texttt{benjamin.leveque@cnrs.fr}} 
}
\begin{document}
\maketitle
\begin{abstract}
We present a bijection for toroidal maps that are essentially $3$-connected ($3$-connected in the periodic planar  
representation). Our construction actually proceeds on certain closely related bipartite toroidal maps with all faces of degree $4$ except for a hexagonal root-face. 
We show that these maps are in bijection with certain well-characterized bipartite unicellular maps. Our bijection, closely related to the recent one by Bonichon and L\'ev\^eque for essentially 4-connected toroidal triangulations, can be seen as the toroidal counterpart of the one  developed in the planar case by Fusy, Poulalhon and Schaeffer, and it extends the one recently proposed by Fusy and L\'ev\^eque for essentially simple toroidal triangulations. Moreover, we show that rooted essentially $3$-connected toroidal maps can be decomposed into two pieces, a toroidal part that is treated by our bijection, and a planar part that is treated by the above-mentioned planar case bijection. 
 This yields a combinatorial derivation for the bivariate generating function of rooted essentially $3$-connected toroidal maps, counted by vertices and faces. 
\end{abstract}


\vspace{.3cm}

\section{Introduction}
Bijective combinatorics of planar maps is a very active research topic, with several applications such as scaling limit results~\cite{le2013uniqueness,miermont2013brownian,addario2017scaling,bettinelli2017compact}, efficient random generation~\cite{schaeffer1999random} and mesh encoding~\cite{PS03b,aleardi2008succinct}
(as well as giving combinatorial interpretations of beautiful counting formulas first discovered by Tutte~\cite{T62a,Tu63} 
in the 60's). In these bijections a map is typically encoded by a decorated tree structure.  
Bijections for planar maps with no girth nor connectivity\footnote{The girth of a graph $G$ is the length of a shortest cycle in $G$; and $G$ is called $k$-connected if one needs to delete at least $k$ vertices to disconnect it.}  conditions rely on geodesic labellings~\cite{Sc97,BoDiGu04}   
and can be extended to any fixed genus~\cite{CMS09,chapuy2009asymptotic,Lep18,chapuy2017bijection} (where the map is encoded by a decorated unicellular map). On the other hand, when a girth condition or connectivity condition is imposed the bijections typically rely (see~\cite{AlPo13,BF12} for general methodologies) on the existence of a certain `canonical' orientation with prescribed outdegree conditions, such as Schnyder orientations 
(introduced by Schnyder~\cite{S89} for simple planar triangulations, and later
generalized to 3-connected planar maps~\cite{Fe03} and to $d$-angulations of girth $d$~\cite{BF12,bernardi2012schnyder}), separating decompositions~\cite{de1995bipolar} or transversal structures~\cite{kant1997regular,Fu07b}; and it is not known how to specify such a canonical orientation in any fixed genus 
(we also mention the powerful approach by Bouttier and Guitter~\cite{bouttier2014irreducible} based on 
 decomposing so-called \emph{slices}; this method, which yields a unified combinatorial decomposition for 
irreducible maps with control on the girth and face-degrees, is yet to be extended to higher genus). 
 
However, it has recently appeared that in the particular case of genus
$1$ (toroidal maps) a notion of canonical orientation amenable to a
bijective construction can be found, with the nice property that the
outdegree condition on vertices is completely homogenous (which is not
the case in the planar case, where the outdegree conditions differ for
vertices incident to the outer face). This has been done for
essentially simple\footnote{For a toroidal map $M$, with $M^{\infty}$
  the corresponding infinite periodic planar representation, we say
  that $M$ is essentially simple if $M^{\infty}$ is simple, and more
  generally that $M$ has essential girth $d$ if $M^{\infty}$ has girth
  $d$, and we say that $M$ is essentially $k$-connected if
  $M^{\infty}$ is $k$-connected.} triangulations~\cite{DGL15} (based
on earlier work on toroidal Schnyder woods~\cite{GL14}, with graph drawing
applications) and more generally for maps with essential
girth $d\geq 1$ and root-face degree $d$~\cite{FL18}, and for
essentially 4-connected triangulations~\cite{BoLe18}. It has also been
shown in~\cite{FL18} that, similarly as in the planar
case~\cite{BF12}, such bijections (at least, the one in~\cite{FL18})
can be obtained by specializing a certain `meta'-bijection that is
derived from a construction due to Bernardi and Chapuy~\cite{BC11}.

In this article we apply this methodology to the 
family of essentially 3-connected toroidal maps, where our bijection can be seen as the natural toroidal counterpart of the bijection developed in~\cite{FuPoScL} between unrooted binary trees and so-called irreducible quadrangular dissections of the hexagon, which are closely related to  3-connected planar maps. Precisely, we describe in Section~\ref{sec:bij_psi} a bijection $\psi$ (based on local closure operations very similar to those used in the planar case construction~\cite{FuPoScL})  between a certain family of unicellular maps and a certain family $\cH$ of bipartite toroidal maps with one face of degree $6$ and all the other faces of degree $4$ (and having the property of being 
`essentially irreducible', see Section~\ref{sec:defs} for definitions). This construction actually bears a strong resemblance with the recent bijection for essentially 4-connected toroidal triangulations described in~\cite{BoLe18}
(this resemblance already appeared for the planar counterpart constructions given respectively
in~\cite{FuPoScL} and in~\cite{Fu07b}), and we conjecture that a unified bijection for `essentially irreducible' toroidal $d$-angulations should exist.   
 
In Section~\ref{sec:link} we then show that any 
rooted essentially $3$-connected toroidal map can be decomposed (via the associated bipartite quadrangulation) into two parts by cutting along a certain `maximal' hexagon: 
a planar part that can be treated by the planar case bijection~\cite{FuPoScL}, and a toroidal part that can be treated by our bijection $\psi$. We then obtain in Section~\ref{sec:counting}  
 a combinatorial derivation of the bivariate generating function of rooted essentially 3-connected toroidal maps counted by vertices and faces. 

Similarly as in the planar case~\cite{FuPoScL}, the inverse bijection
$\phi$, which starts from $\cH$  and is described in Section~\ref{sec:inverse}, relies on canonical 3-biorientations (edges are either simply directed or bidirected, every vertex has outdegree $3$), to which the above-mentioned
meta-bijection can be specialized. These biorientations are closely related to so-called balanced toroidal Schnyder orientations whose existence has been shown in~\cite{GL14,LevHDR}.  

\section{Preliminaries on maps}\label{sec:defs}
A \emph{map} $M$ of genus $g$ is given by the embedding (up to
orientation-preserving homeomorphism) of a connected graph $G$
(possibly with loops and multiple edges) on a compact orientable
surface $\Sigma$, such that all components of $\Sigma\backslash G$ are
homeomorphic to topogical disks, which are called the \emph{faces} of
$M$. The \emph{genus} of $M$ is the genus of the underlying surface
$\Sigma$; maps of genus $0$ and $1$ are called planar and toroidal,
respectively. Since the universal cover of the torus is the periodic
plane, a toroidal map $M$ can also be seen as an infinite periodic
planar map which we denote by $\Minf$, see Figure~\ref{fig:maps}(a)
(in our figures, toroidal maps are drawn on the flat torus
$\mathbb{R}^2/\mathbb{Z}^2$, i.e., the unit square where the opposite
sides are identified; and the drawing lifts to a periodic planar
drawing upon replicating the square to tile the whole plane).
   
A \emph{corner} of $M$ is the angular sector between two consecutive half-edges around a vertex. 
The \emph{degree} of a vertex or face of $M$ is the number of corners that are incident to it. 
A map is called \emph{rooted} if it has a marked corner, and is called \emph{face-rooted} if it has a marked
face. 
A \emph{triangulation} (resp. \emph{quadrangulation}) is a map with all faces of degree $3$
 (resp. of degree $4$). A map is called \emph{6-quadrangular} if it has one face of degree $6$
and all the other faces have degree $4$ (we will refer to the face of degree $6$ as the root-face). 

\begin{figure}
\center
\includegraphics[scale=0.77]{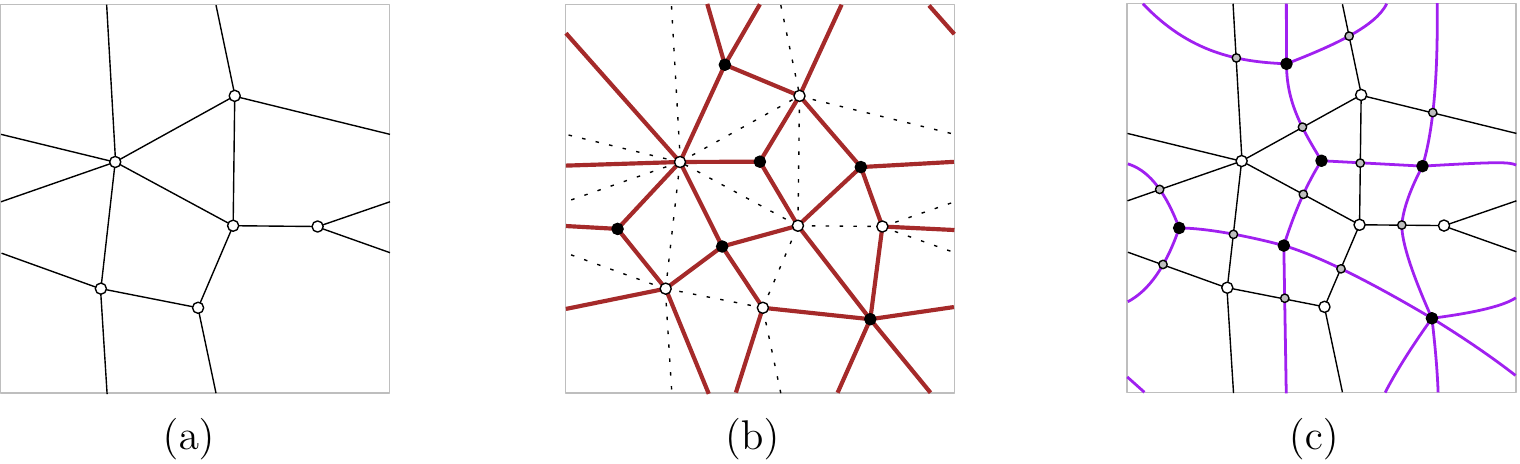}
\caption{(a) A toroidal map $M$. (b) The angular map of $M$. (c) The derived map $\hM$.}
\label{fig:maps}
\end{figure}

A graph or map is called \emph{bipartite} if its vertices are partitioned into black and white vertices, 
such that every edge connects a black vertex to a white vertex.
For $M$ a map (whose vertices are white), the \emph{angular map} of $M$ is the bipartite 
map $Q$ obtained by inserting a black vertex $v_f$
 inside each face $f$ of $M$, and connecting $v_f$ to the vertices of $M$ at every corner around $f$,
and then deleting all the edges of $M$ (see Figure~\ref{fig:maps}(b)). 
It is well known that 
 the mapping from $M$ to $Q$ is a bijection between maps of genus $g$ with $i$ vertices and $j$ faces, 
and bipartite quadrangulations of genus $g$ with $i$ white vertices and $j$ black vertices.
  
On the other hand, the \emph{dual} of $M$ is the map $M^*$ obtained
by inserting a vertex $v_f$ in each face $f$ of $M$, and then for each edge $e$ of $M$, with $f,f'$ the 
faces on each side of $e$, drawing an edge $e^*$ accross $e$ that connects $v_f$ to $v_{f'}$,
and finally deleting the vertices and edges of $M$. 
The \emph{derived map} of $M$ is the map $\hM$ obtained by superimposing $M$ with $M^*$ (see Figure~\ref{fig:maps}(c)). There are 3 types of vertices in $\hM$: the vertices of $M$ are called \emph{primal vertices}, the vertices of $M^*$
are called \emph{dual vertices}, and the vertices of degree $4$ at the intersection of each edge $e$ with its dual edge $e^*$, are called \emph{edge-vertices}. 
Note that $\hM$ is actually a quadrangulation. In fact, it is easy to see that it is 
the angular map of the angular map of $M$.  
Note that $M^*$, $Q$, and $\hM$ all have the same genus as $M$.

A (possibly infinite) map is called \emph{$3$-connected} if it is simple and one needs to delete at least $3$ of its vertices to disconnect it. 
A toroidal map $M$ is called \emph{essentially $3$-connected}\footnote{In graph theory articles the term \emph{essentially $k$-connected} is used with a different meaning.} if $\Minf$ is $3$-connected. 
For instance the map in Figure~\ref{fig:maps}(a)
is not $3$-connected (since it has a double edge) but is essentially $3$-connected\footnote{A visual characterisation is that a toroidal map is essentially $3$-connected iff it admits a periodic planar straight-line drawing that is strictly convex (i.e., where all corners have angle smaller than $\pi$), which is satisfied in Figure~\ref{fig:maps}(a).}.  
A (possibly infinite) map is called \emph{irreducible} if its girth equals its minimum face-degree $d$,    
and there is no cycle of length $d$ apart from face-contours.

A \emph{contractible region} is a region homeomorphic to an open disk.
A \emph{contractible} closed walk of $M$
(resp. of $\Minf$) is defined as a 
 closed walk $W$ having a contractible region
on its right, which is called the interior of $W$.

A toroidal map $M$ is called \emph{essentially irreducible} if $\Minf$ is irreducible. Equivalently
in $M$, every closed walk that delimits a contractible region on its right side has length at least $d$ (with $d$ the minimum face-degree), 
and has length $d$ iff the enclosed region is a face.
 

In genus $0$ it is known that a map is 3-connected iff its angular map is irreducible. 
By an easy adaptation of the arguments to the periodic planar case
 (see also~\cite{RoVi90} for the higher genus case) one obtains:

\begin{claim}\label{claim:bijMQ}
For $M$ a toroidal map, $M$ is essentially $3$-connected iff its angular map 
is essentially irreducible.
\end{claim}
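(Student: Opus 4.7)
My plan is to reduce to the genus-zero case by working on the universal cover. Since the angular-map construction commutes with universal covers, the angular map of $\Minf$ equals $Q^\infty$, where $Q$ is the angular map of $M$. By definition, $M$ is essentially $3$-connected iff $\Minf$ is $3$-connected, and $Q$ is essentially irreducible iff $Q^\infty$ is irreducible. So it suffices to show the following: an infinite periodic simple planar map $N$ is $3$-connected iff its angular map is irreducible. This is the periodic analog of a well-known genus-zero statement (see e.g.~\cite{FuPoScL}), and the classical proof is essentially local and topological, so the task is to verify that it still goes through in the periodic planar setting.

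The core of the argument is a dictionary that translates the relevant cycles of the angular map into combinatorial defects of $N$. Since the angular map is bipartite, every cycle has even length. A $2$-cycle uses a white vertex $u$ of $N$ and a black vertex $v_f$ sitting inside a face $f$, so it records that $u$ is incident to $f$ at two distinct corners; this corresponds to a loop at $u$, or to $u$ being a cutvertex of $N$ that borders $f$ on both sides. A $4$-cycle $C$ is forced by bipartiteness to alternate two white vertices $u,v$ and two black vertices $v_{f_1},v_{f_2}$, witnessing that $u$ and $v$ both lie on the boundary of $f_1$ and of $f_2$. Such a $C$ bounds a face of the angular map iff $uv$ is an edge of $N$ with $f_1,f_2$ on its two sides. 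Otherwise, $C$ is a simple closed curve in the plane and therefore bounds a topological disk $D$ whose interior meets $N$ only at $u$ and $v$; this means either a nontrivial portion of $N$ sits inside $D$, giving $\{u,v\}$ as a $2$-separator, or $D$ contains only an edge $uv$, giving a multi-edge $uv$.

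Both directions of the equivalence then follow. If $N$ is $3$-connected it has no loops, no multi-edges, no cutvertices and no $2$-separators; the dictionary rules out $2$-cycles and non-facial $4$-cycles in the angular map, which is therefore irreducible. Conversely, if the angular map is irreducible, none of the four defect types can occur, so $N$ is simple and its connectivity is at least $3$. The only delicate point I foresee is the topological step that a simple $4$-cycle of the angular map bounds a disk: this is obvious for $N$ on the plane but fails on the torus, where a simple $4$-cycle of $Q$ could be non-contractible and would not yield a genuine separator of $M$. Passing to the universal cover cleanly sidesteps this subtlety, and is precisely why \emph{essential} $3$-connectivity (rather than $3$-connectivity of $M$ itself) is the correct toroidal analog for this equivalence.
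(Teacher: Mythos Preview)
Your proposal is correct and follows essentially the same approach as the paper. The paper does not spell out a proof of this claim; it simply asserts that the genus-$0$ equivalence (a map is $3$-connected iff its angular map is irreducible) adapts ``by an easy adaptation of the arguments to the periodic planar case'' and cites~\cite{RoVi90} for the higher-genus statement. Your argument is precisely that adaptation made explicit: you pass to the universal cover, observe that the angular-map construction commutes with it, and then run the standard planar dictionary between $2$-cycles/$4$-cycles of the angular map and loops, cutvertices, multi-edges, and $2$-separators of the map.
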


\section{Main results}
We let $\cH$ be the family of toroidal bipartite 6-quadrangular
maps that are essentially irreducible and such that, apart from the root-face contour, there is no 
other closed walk of length $6$ enclosing a contractible region containing the root-face.  
We let $\cQ$
be the family of toroidal bipartite quadrangulations that are essentially irreducible. 
 We denote by $\cT$ the family of  essentially $3$-connected toroidal maps.

A map is called \emph{unicellular}
if it has a unique face, and \emph{precubic} if all its vertices have degree in $\{1,3\}$. 
In a precubic map, the vertices of degree $1$ are called \emph{leaves} and those of degree $3$
are called \emph{nodes}; the edges incident to a leaf are called \emph{pending edges}, the other ones are called
\emph{plain edges}. We let $\cU$ be the family of precubic bipartite unicellular toroidal maps.
For $U\in\cU$ we say that $U$ is \emph{balanced} if any
(non-contractible) cycle $C$ of $U$ has the same number of incident edges on both sides, see
Figure~\ref{fig:closure}(a) for an example.   
Let $\cUbal$ be the set of balanced
elements in $\cU$.  

\subsection{Bijection $\psi$ from $\cUbal$ to $\cH$}\label{sec:bij_psi}
We describe here a bijection $\psi$ from $\cUbal$ to $\cH$, based on repeated closure operations, which can 
be seen as the toroidal counterpart of the bijection given in the planar case~\cite{FuPoScL}.

\begin{figure}[!ht]
\center
\includegraphics[scale=0.8]{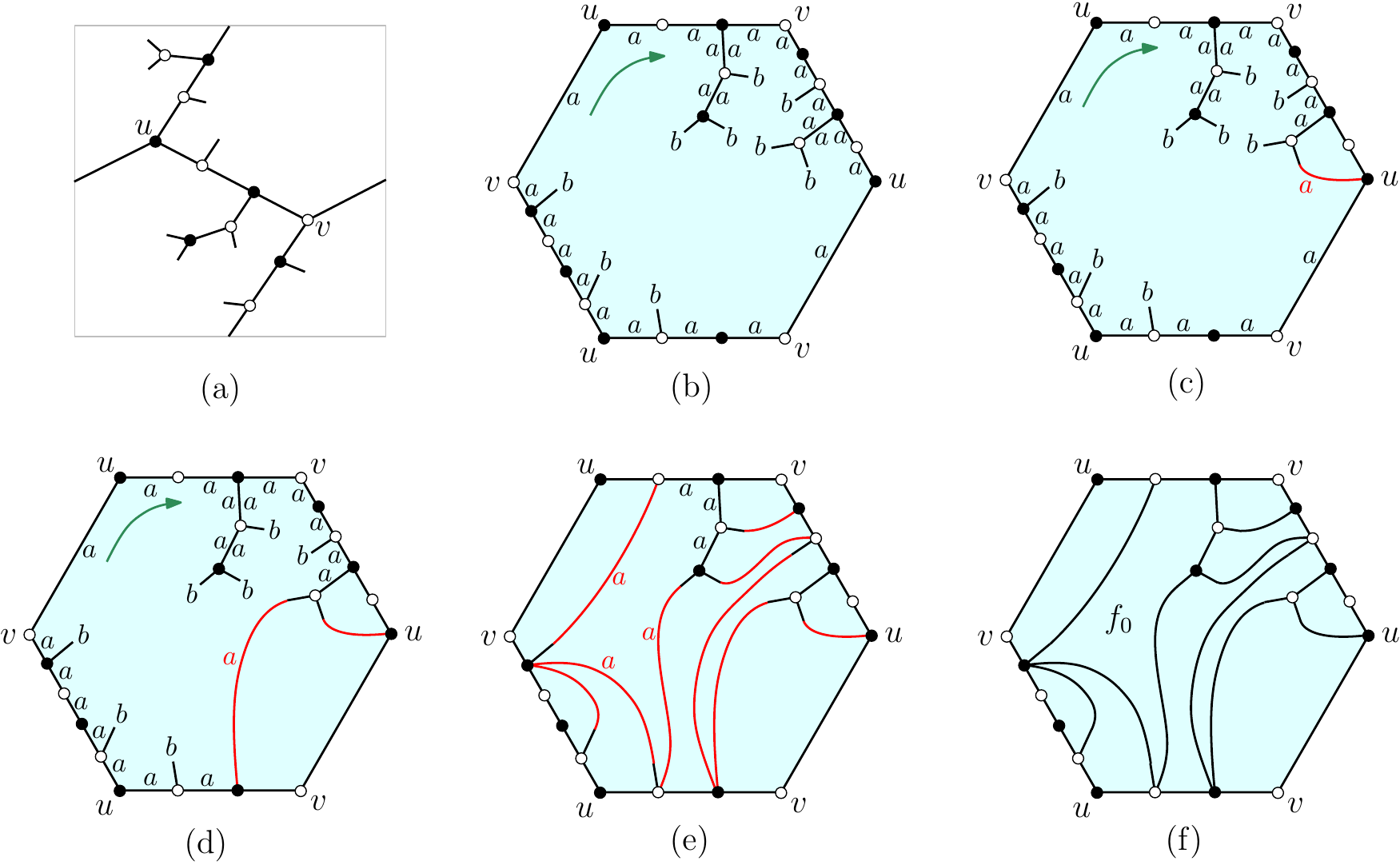}
\caption{(a) A unicellular map $U\in\cUbal$. (b) The same unicellular map in the representation where 
the unique face is unfolded into a hexagon (whose opposite sides are identified), where we write $b$ at each leaf
and write $a$ along every side of plain edge. (c) A local closure. (d) A second local closure. (e) The output after 
 greedily performing all local closures (the result does not depend on which order the local closures are done). (f)
The resulting map $\psi(U)=H\in\cH$.}
\label{fig:closure}
\end{figure}

Let $U\in\cUbal$, and let $m$ be the number of leaves of $U$. Then an application of the Euler formula together with the hand-shaking lemma implies
that $U$ has $m+3$ plain edges, hence $U$ has $2m+6$ sides of plain edges. Initially the \emph{root-face} 
of $U$ is defined as its unique face. A clockwise walk around the root-face of $U$ (i.e., with the 
root-face on our right) gives a cyclic word
on the alphabet $\{a,b\}$,  
writing $a$ when we pass along a side of plain edge, and writing
 $b$ when we pass along a leaf. Each time we see the pattern $baaa$ we can perform a so-called 
\emph{local closure} operation, which consists in merging the leaf at the end of the pending edge for $b$
to the end of the edge-side corresponding to the third $a$, forming a new quadrangular face, 
see Figure~\ref{fig:closure}(c). The resulting edge is now 
considered as a plain edge in the new figure, and accordingly the pattern $baaa$ is replaced by 
 the letter $a$ in the cyclic word coding the situation around the root-face. We can then repeat 
local closure operations (see Figure~\ref{fig:closure}(d)), 
each time decreasing by $1$ the numbers of $b$'s and by $2$ the number of $a$'s,
until there are no $b$'s. Note that the invariant $|a|=2|b|+6$ is maintained all along (which also ensures
that there is at least one local closure possible at each step). Hence, at the end, the map $H$ we obtain
is a bipartite toroidal 6-quadrangular map. We let $\psi$ be the mapping that associates $H$ to $U$  (see
Figures~\ref{fig:closure}(e) and~\ref{fig:closure}(f)).  

\begin{theorem}\label{theo:bij_psi}
The mapping $\psi$ is a bijection from $\cUbal$ to $\cH$. The number of black (resp. white) nodes in $\cUbal$ is mapped to the number of 
black (resp. white) vertices in $\cH$.
\end{theorem}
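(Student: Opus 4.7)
The plan is to construct an explicit inverse $\phi:\cH\to\cUbal$ by iterated local \emph{opening} moves and show that a single closure and a single opening are inverses of one another. Mutual inverseness $\phi\circ\psi=\mathrm{id}$ and $\psi\circ\phi=\mathrm{id}$ then follows by concatenating the local moves, and since neither operation creates or destroys a vertex, the correspondence between black/white nodes in $\cUbal$ and black/white vertices in $\cH$ is automatic from the bijection.

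I would first confirm that $\psi$ is well-defined. The invariant $|a|=2|b|+6$ is preserved by every closure, which strictly decreases $|b|$ by one, so after exactly $m$ closures we reach $|b|=0$ and a root-face of degree $6$. Every newly created face is quadrangular by construction, and bipartiteness is preserved because the leaf identified with the endpoint of the third $a$-side after a $b$ has the correct color by alternation along the boundary walk. Thus $\psi(U)$ is a bipartite $6$-quadrangular toroidal map. The substantial step is to show $\psi(U)\in\cH$: essential irreducibility together with the property that the root-hexagon is the only contractible closed walk of length $6$ enclosing the root-face. I would argue this by contrapositive, assuming $\psi(U)$ admits a forbidden contractible walk $W$ and pulling $W$ back through the sequence of closures; each time $W$ uses a newly-created edge that edge is replaced by the three original $a$-sides plus the corresponding leaf excursion, which modifies the length in a controlled way, and at the end one reaches a short closed walk in $U$ that cannot exist because $U$ is a balanced precubic unicellular map. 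This pullback argument, especially for walks that wind non-trivially around the torus, is the main obstacle.

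For $\phi$, I would iterate the opposite local move on $H\in\cH$: find a quadrangular face sharing three consecutive edges with the current root-face, delete the fourth edge to turn it into a pending edge whose leaf lies in the enlarged root-face, and iterate. Existence of such a face at every step (until only the unicellular output remains) is forced precisely by essential irreducibility and the maximality of the root-hexagon: otherwise one could exhibit a contractible closed walk of length $4$ not bounding a face, or a second contractible closed walk of length $6$ around the current root-face, both of which would contradict the hypotheses defining $\cH$ (and which are preserved along the openings). The resulting $\phi(H)$ is bipartite, precubic, and unicellular by construction; the balanced condition on non-contractible cycles can be verified either via the canonical $3$-biorientation of $H$ alluded to in the introduction (outdegree $3$ at every vertex produces a homologically trivial flow, which translates to the required edge balance on every non-contractible cycle) or directly by tracking that each opening preserves the edge counts on each side of every non-contractible cycle. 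Mutual inverseness then follows because the closure of a pattern $baaa$ is literally undone by the opening of the quadrangular face it creates.
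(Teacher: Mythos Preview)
Your outline has two genuine gaps, both concentrated in the definition and analysis of $\phi$.

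First, the opening move is not well-defined as stated, and its existence at every step is exactly the hard part. You assert that essential irreducibility plus maximality of the root-hexagon force the current root-face to have a quadrangular neighbour sharing three consecutive sides, but you give no argument; this is not a routine consequence of the girth/irreducibility conditions. In the paper the corresponding existence is obtained only via a substantial external input: the existence of a \emph{balanced Schnyder orientation} on the derived map $\hM$ (Theorem~\ref{th:existence}, from~\cite{LevHDR}), transferred to a canonical S-quad $3$-biorientation of $H$, after which Lemma~\ref{lem:rightmost} shows that every rightmost walk loops on the hexagonal contour. That is what guarantees the faces can be peeled off one by one; without the orientation you have no handle on which face is ``last closed'', and even if some openable face exists you have not argued confluence, so $\phi$ may depend on choices.

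Second, and relatedly, you have no mechanism ensuring $\phi(H)\in\cUbal$. Your two suggested routes are both incomplete: the appeal to a ``canonical $3$-biorientation'' is circular, since your construction of $\phi$ never produces one; and the claim that each opening preserves the left/right edge counts along every non-contractible cycle is false in general (an opening deletes an ingoing half-edge on one specific side of any cycle through the opened vertex). In the paper, balancedness of $U=\phi(H)$ is a consequence of the balancedness of the Schnyder orientation used to define $\phi$ (Lemma~\ref{lem:bal_pseudo_bal}), and conversely the uniqueness of that orientation is what pins $\phi$ down as the inverse of $\psi$. Your pull-back argument for $\psi(U)\in\cH$ has a similar issue: replacing a closure edge by the original $baaa$ pattern \emph{increases} the length of a walk, so you cannot bound lengths this way; the paper instead proves $\psi(U)\in\cH$ by a counting argument on the mobile restricted to the enclosed region (Lemma~\ref{lem:necess}), which crucially uses the $3$-biorientation that the closure induces.
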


The inverse bijection $\phi$ will be described in Section~\ref{sec:inverse}. Then we will prove in Section~\ref{sec:proof_bij} that $\phi$ is a bijection from  $\cH$ to $\cUbal$, and prove in Section~\ref{sec:proof_inverse} 
that $\psi$ is its inverse. 

\subsection{Link between $\cH$ and  essentially $3$-connected toroidal maps}\label{sec:link}
We let $\cQ'$ be the family of maps from $\cQ$ with a marked edge, let $\cH'$ be the family of maps from $\cH$ with a marked corner incident to a white vertex in the hexagonal face,  and let $\cT'$
be the family of rooted maps from $\cT$ (i.e., with a marked corner). Note that each corner in a map corresponds to an edge in the angular map. 
Hence, Claim~\ref{claim:bijMQ} yields
\begin{equation}\label{eq:TQ}
\cT'\simeq\cQ',
\end{equation}
with the number of vertices (resp. faces) in the left-hand side corresponding to the number of white vertices
(resp. black vertices) in the right-hand side. 

We now explain how to decompose maps in $\cQ'$ into two parts, one planar and one in~$\cH'$. 

 For $Q\in\cQ'$, with $e$ its marked edge, 
an \emph{enclosing hexagon} of $Q$ is a closed walk of length $6$ bounding a contractible region that 
strictly contains $e$. An enclosing hexagon is called \emph{maximal} if its bounded  
region is not included in the bounded region of another enclosing hexagon.

\begin{lemma}
  \label{lem:maxdisjoint}
  Each $Q\in\cQ'$, has a unique maximal enclosing hexagon.
\end{lemma}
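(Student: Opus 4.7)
I would work in the universal cover $Q^\infty$ of $Q$, which is an infinite planar, bipartite, essentially irreducible quadrangulation. Lift the marked edge $e$ to an edge $\tilde e$, so that enclosing hexagons of $e$ in $Q$ correspond to length-$6$ closed walks in $Q^\infty$ bounding a topological disk whose open interior strictly contains $\tilde e$. Existence of at least one enclosing hexagon is immediate: since essential irreducibility forces $Q^\infty$ to have girth $4$, the two quadrangular faces incident to $e$ share only the edge $e$, and gluing them along $e$ yields a region bounded by a length-$6$ walk strictly containing $e$. Since $Q$ is finite, there are only finitely many enclosing hexagons, so at least one is maximal for the inclusion order on bounded disks; the substance of the lemma is therefore to prove uniqueness.

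For uniqueness I would suppose $H_1$ and $H_2$ are two maximal enclosing hexagons with bounded disks $D_1$ and $D_2$, choose a lift $\tilde e$ of $e$ together with the lifts $\tilde D_1,\tilde D_2$ of $D_1,D_2$ that contain $\tilde e$, and show that $\tilde D_1=\tilde D_2$ (which forces $H_1=H_2$ since the boundary walk of a disk is uniquely determined as a cyclic walk). Let $D^+$ be the smallest simply connected region of the plane containing $\tilde D_1\cup\tilde D_2$ (the ``filled union''), and let $D^-$ be the connected component of $\tilde D_1\cap\tilde D_2$ that contains $\tilde e$. Both $D^+$ and $D^-$ are simply connected and strictly contain $\tilde e$. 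The crux of the proof is to establish the estimate
\[
  |\partial D^+|\,+\,|\partial D^-|\ \le\ |\partial\tilde D_1|\,+\,|\partial\tilde D_2|\ =\ 12.
\]
This would follow from the obvious inequalities $|\partial D^+|\le|\partial(\tilde D_1\cup\tilde D_2)|$ and $|\partial D^-|\le|\partial(\tilde D_1\cap\tilde D_2)|$ together with a careful classification of the edges of $\tilde H_1\cup\tilde H_2$: each edge of $\tilde H_i$ either lies outside $\tilde D_{3-i}$, in the open interior of $\tilde D_{3-i}$, or is shared with $\tilde H_{3-i}$, and for each shared edge one records whether $\tilde D_1$ and $\tilde D_2$ sit on the same or on opposite sides. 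Tracking which type of edge contributes to $\partial(\tilde D_1\cup\tilde D_2)$ and to $\partial(\tilde D_1\cap\tilde D_2)$ yields $|\partial(\tilde D_1\cup\tilde D_2)|+|\partial(\tilde D_1\cap\tilde D_2)|\le 12$.

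To conclude, essential irreducibility of $Q$ implies that any contractible length-$4$ closed walk in $Q^\infty$ bounds a face; since an edge cannot lie in the open interior of a face and bipartiteness forces a contractible closed walk to have even length, both $|\partial D^+|$ and $|\partial D^-|$ are at least $6$. Combined with the displayed estimate this forces $|\partial D^+|=|\partial D^-|=6$, so $\partial D^+$ is itself an enclosing hexagon of $e$ whose bounded disk contains $\tilde D_1$; maximality of $H_1$ then yields $D^+=\tilde D_1$, hence $\tilde D_2\subseteq\tilde D_1$, and by the same argument with the roles swapped one gets $\tilde D_1=\tilde D_2$, as desired. The main obstacle is the edge-by-edge boundary bookkeeping behind the key estimate, especially when $H_1$ and $H_2$ share arcs or when $\tilde D_1\cap\tilde D_2$ is disconnected (so that $\partial D^-$ is only a proper part of $\partial(\tilde D_1\cap\tilde D_2)$); the case split outlined above is designed precisely so that each edge contributes to $|\partial D^+|+|\partial D^-|$ no more than its total contribution to $|\partial\tilde D_1|+|\partial\tilde D_2|$.
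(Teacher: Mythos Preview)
Your argument follows the same core idea as the paper's: establish a (sub)modularity bound on boundary lengths, $|\partial(\text{union})|+|\partial(\text{intersection})|\le 12$, and combine it with the lower bound $\ge 6$ coming from essential irreducibility to force the union to be an enclosing hexagon, contradicting maximality. The paper's execution is more streamlined, however. Rather than lifting to $Q^\infty$, passing to the filled union $D^+$ and a component $D^-$, and doing an edge-by-edge classification, it works directly on the torus with a purely combinatorial notion of \emph{region} $R=V'\cup E'\cup F'$ (closed under taking edges incident to vertices in $V'$ and faces incident to edges in $E'$) and defines the boundary-length $\ell(R)$ as the number of face/edge incidences with the face in $R$ and the edge not in $R$. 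With these definitions the exact equality $\ell(R_1)+\ell(R_2)=\ell(R_1\cup R_2)+\ell(R_1\cap R_2)$ holds by a one-line check (each incidence contributes identically to both sides), so the bookkeeping you describe, as well as the separate handling of holes in the union and extra components in the intersection, becomes unnecessary. One point worth tightening in your write-up: in the last step you invoke maximality of $H_1$ against $D^+$, which presupposes that a boundary-length-$6$ disk in $Q^\infty$ containing $\tilde e$ always comes from an enclosing hexagon of $Q$; this needs the projection to be injective on $D^+$, which you have not argued (the paper's proof has the analogous implicit step that $R_1\cup R_2$ is a disk-region).
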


\begin{proof}
  Note that the union of the two faces of $Q$ incident to the marked edge $e_0$  
   forms an enclosing hexagon, so the set of enclosing hexagons is non-empty, and thus there
is at least one maximal enclosing hexagon. 

  We first reformulate the definition of a hexagon. Let us define a
  \emph{region} of $Q$ as given by $R=V'\cup E'\cup F'$ where
  $V',E',F'$ are subsets of the vertex-set, edge-set and face-set of
  $Q$, such that for $v\in V'$ all the edges incident to $v$ are in
  $E'$, and for $e\in E'$ the faces incident to $e$ are in
  $F'$. Note that the union (resp. intersection) of two regions is
  also a region.  We define a \emph{boundary-edge-side} of $R$ as an incidence
  face/edge of $Q$ such that the face is in $F'$ and the edge is not
  in $E'$. The \emph{boundary-length} of $R$, denoted by $\ell(R)$, is
  the number of boundary-edge-sides of $R$. Since $Q$ is bipartite,
  the value of $\ell(R)$ is even.
  A \emph{disk-region} is a
  region $R$ homeomorphic to an open disk. An enclosing hexagon thus corresponds
  to the (cyclic sequence of) boundary-edge-sides of a disk-region $R$
  such that $\ell(R)=6$ and $e_0\in E'$; and it is \emph{maximal} if there is no other
  disk-region ${R'}$ of boundary-length $6$ such that $R\subset {R'}$.
  It is easy to see that for any two regions $R_1,R_2$ we have
  $\ell(R_1)+\ell(R_2)=\ell(R_1\cup R_2)+\ell(R_1\cap R_2)$ (indeed any
  incidence face/edge of $Q$ has the same contribution to
  $\ell(R_1)+\ell(R_2)$ as to $\ell(R_1\cup R_2)+\ell(R_1\cap R_2)$).
  
For the sake of contradiction, suppose that there are two distinct maximal enclosing hexagons,
and let $R_1,R_2$ be the respective enclosed disk-regions. 
Since  $Q$ is essentially irreductible and both $R_1\cup R_2$ and
  $R_1\cap R_2$ contain $e_0$, we have
  $\ell(R_1\cup R_2) \geq 6$ and $\ell(R_1\cap R_2) \geq 6$. Moreover,
    $12=\ell(R_1)+\ell(R_2)=\ell(R_1\cup R_2)+\ell(R_1\cup R_2)$, so
  $\ell(R_1\cup R_2) = \ell(R_1\cap R_2) = 6$. Thus $R_1\cup R_2$ is
  an enclosing hexagon, which contradicts the
  maximality of $R_1,R_2$.
\end{proof}

Let $\cD$ be the family of planar bipartite irreducible 6-quadrangular maps with at least one edge not
incident to the hexagonal face,  
and let $\cD'$ be the set of maps from $\cD$ with a marked edge not incident to the hexagonal face. 

\begin{claim}\label{claim:decomp_HQ}
We have the following isomorphism:
\begin{equation}\label{eq:QH}
\cQ'\simeq \cH'\times \cD'.
\end{equation}
The number of black (resp. white) vertices in the left-hand side corresponds to three plus the total number of black (resp. white) vertices in the right-hand side
\end{claim}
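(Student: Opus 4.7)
The plan is to build the bijection by decomposing $Q$ along the unique maximal enclosing hexagon provided by Lemma~\ref{lem:maxdisjoint}. Let $(Q,e_0)\in\cQ'$, and let $\cX$ be its unique maximal enclosing hexagon, bounding a disk-region $R$ that strictly contains $e_0$. Cutting $Q$ along $\cX$ yields two pieces: a planar bipartite $6$-quadrangular map $D$ (namely $R$ together with its hexagonal outer face, which is $\cX$) and a toroidal bipartite $6$-quadrangular map $H$ (the complement, with $\cX$ declared as the root-face). The two pieces share exactly the $6$ vertices and $6$ edges of $\cX$.

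Next I would verify $D\in\cD'$ and $H\in\cH$. The marked edge $e_0$ lies in the interior of $R$, so it becomes an edge of $D$ not incident to the hexagonal face, giving $D\in\cD'$. For the irreducibility of $D$: a contractible closed walk in $D$ of length less than $4$, or of length $4$ and not a face-contour, would lift to one in $\Minf$, contradicting essential irreducibility of $Q$; and a closed walk of length $6$ in $D$ other than $\cX$ enclosing a region of $D$ would give, in $Q$, an enclosing hexagon of $e_0$ strictly finer than $\cX$, contradicting maximality. Essential irreducibility of $H$ is inherited from $Q$'s; and if $H$ admitted a closed walk of length $6$ other than the root-face contour enclosing a contractible region containing the root-face, then gluing $D$ back would produce an enclosing hexagon of $e_0$ in $Q$ strictly larger than $\cX$, again contradicting maximality; hence $H\in\cH$. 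I would then canonically choose a marked white corner in the hexagonal face of $H$ (using the cyclic structure of $\cX$ in the decomposition) to promote $H$ to an element of $\cH'$.

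The inverse map $\cH'\times\cD'\to\cQ'$ is the gluing operation: given $((H,c),(D,e_0))$, identify the root-face contour of $H$ with the outer hexagon of $D$, using $c$ to fix the three-fold rotational ambiguity of the identification, and keep $e_0$ as the marked edge. The result is a toroidal bipartite quadrangulation, whose essential irreducibility is checked by decomposing any hypothetical short contractible closed walk into pieces that are either entirely inside $H$ (ruled out by $H$'s essential irreducibility), entirely inside $D$ (ruled out by $D$'s irreducibility), or crossing $\cX$ (ruled out by combining the hexagon-maximality condition of $\cH$ with the irreducibility of $D$). Bijectivity then follows by checking that the two operations are mutually inverse, noting that applying the forward map after the inverse recovers the hexagon used in the gluing as the unique maximal enclosing hexagon of $e_0$. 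For the vertex count, $\cX$ contributes $3$ black and $3$ white vertices that are shared between $D$ and $H$ in the decomposition, explaining the stated $+3$ discrepancy.

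The step I expect to be hardest is the verification of essential irreducibility of the glued map: a short contractible walk that crosses $\cX$ several times needs careful handling, splitting into arcs that alternate between the two sides and re-routing along $\cX$ to exhibit forbidden short walks on each side. A related subtle point is the bookkeeping of the marked white corner in $\cH'$, which must encode exactly the three-fold rotational ambiguity of the hexagon identification---no more, no less---for the bijection to be well-defined in both directions.
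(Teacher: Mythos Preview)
Your approach is essentially the paper's: decompose along the unique maximal enclosing hexagon and reassemble by patching $D$ into the root-face of $H$, with the marked white corner of $\cH'$ resolving the three-fold ambiguity. Two small corrections are worth making. First, the definition of $\cD$ (irreducible $6$-quadrangular) only forbids non-facial $4$-cycles, so your clause about $6$-walks in $D$ is unnecessary, and in any case wrongly argued (a $6$-walk in $D$ need not enclose $e_0$, so it is not automatically an enclosing hexagon). Second, for the corner bookkeeping the paper's concrete device is to fix once and for all a distinguished white boundary-vertex $v(D)$ for each $D\in\cD'$ (e.g., the first one reached by a left-to-right DFS from the root edge of $D$), glue by placing $v(D)$ at the marked corner of $H$, and conversely, when cutting, mark in $H$ the white corner at which $v(D)$ sat; this supplies exactly the anchor on the $D$-side that your gluing description leaves implicit.
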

\begin{proof}
For each $D\in\cD'$ we arbitrarily choose a vertex, denoted $v(D)$, among the $3$ white vertices incident to the hexagonal face\footnote{For instance the first white vertex incident to the hexagonal face encountered during a left-to-right depth-first traversal of the map $D$ starting from its root edge.} (considered as the outer face of $D$).  
To each pair $(H,D)\in\cH'\times\cD'$ we associate the bipartite quadrangulation $Q$ obtained by patching $D$ within the 
root-face of $H$, with $v(D)$ merged at the root-corner of $H$. Clearly the contour of the root-face of $H$ becomes
the maximal enclosing hexagon of $Q$. We also have to check that $Q$ is essentially irreducible, which amounts to check that there is no closed walk of length at most $4$ enclosing a contractible region that covers both faces in $H$ and in $D$. An easy case-analysis ensures that it is not possible without creating a closed walk of length at most $6$ in $H$ that does not bound the hexagonal face but encloses a contractible region containing the hexagonal face, a contradiction. 

Conversely, from $Q\in\cQ'$, we let $C_0$
be the maximal enclosing hexagon of $Q$. Let $D$ be the map in $\cD'$ formed by $C_0$ (unfolded into a simple 6-cycle) and the faces within the contractible region enclosed by $C_0$. And let $H\in\cH'$ be the map obtained by emptying
the area enclosed by $C_0$; the emptied area is now a hexagonal face where we mark the white corner where $v(D)$ was. 

The mappings from $(H,D)$ to $Q$ and from $Q$ to $(H,D)$ being clearly inverse of each other, we have a bijection between 
$\cQ'$ and $\cH'\times\cD'$. 
\end{proof}

\subsection{Counting rooted essentially $3$-connected toroidal maps}\label{sec:counting}

\subsubsection{Expression for the bivariate generating function}

In this section we show that the bijection of Theorem~\ref{theo:bij_psi}, combined with the 
decomposition of Section~\ref{sec:link} and the planar case bijection~\cite{FuPoScL}, allows us to derive 
an explicit  algebraic expression for the (bivariate) generating function of rooted  essentially
$3$-connected toroidal maps:

\begin{theorem}
\label{thm:counting}
Let $T\equiv T(\zb,\zw)$ be the generating function of $\cT'$, with $\zb$ and $\zw$ conjugate to the numbers of 
 faces and vertices respectively. Then 
\begin{equation}\label{eq:countij}
T(\zb,\zw) = \frac{\rb\rw(\rb^2 + \rw^2 + \rb\rw+2\rb +2\rw + 1)}{(\rb+\rw+1)(1+\rb+\rw-3\rb\rw)^2}=\frac{p}{(s-3p)^2}(s-p/s),
\end{equation}
where $\rb \equiv \rb(\zb,\zw)$ and $\rw \equiv \rw(\zb,\zw)$ are the algebraic series specified by the system 
$\{\rb=\zb(1+\rw)^2,\ \rw=\zw(1+\rb)^2\}$, and where $s=1+\rb+\rw$ and $p=\rb\rw$.
\end{theorem}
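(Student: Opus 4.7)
The plan is to assemble the structural results already in the paper into a generating-function identity, compute the two non-trivial factors, and simplify. Combining $\cT'\simeq\cQ'$ from~\eqref{eq:TQ} with $\cQ'\simeq\cH'\times\cD'$ from Claim~\ref{claim:decomp_HQ} gives immediately
$$T(\zb,\zw) \;=\; \zb^3\zw^3\,H'(\zb,\zw)\,D'(\zb,\zw),$$
where $H',D'$ are the GFs of $\cH',\cD'$ in the convention that the three black and three white vertices of the shared maximal hexagon are excluded from both sides (whence the prefactor $\zb^3\zw^3$). Each element of $\cH$ contributes three elements to $\cH'$, one per white corner of its hexagonal face, so $H'=3H$. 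The bijection $\psi$ of Theorem~\ref{theo:bij_psi} matches black (resp. white) nodes of $\cUbal$ with all black (resp. white) vertices of $\cH$ (including the three hexagonal vertices of each color), so if $U(\zb,\zw)$ denotes the GF of $\cUbal$ by nodes, then $H=(\zb\zw)^{-3}U$. Substituting gives $T=3\,U\,D'$, reducing the problem to computing $U$ and $D'$ separately.

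The series $D'$ is supplied by the planar case: the bijection of~\cite{FuPoScL} encodes planar irreducible bipartite $6$-quadrangulations (with an additional marked non-boundary edge) by a family of bipartite binary trees whose natural GFs are precisely $\rb,\rw$, defined by $\rb=\zb(1+\rw)^2$ and $\rw=\zw(1+\rb)^2$; this produces an explicit rational expression for $D'$ in $\rb,\rw$. The series $U$ is obtained via the Chapuy--Marcus--Schaeffer \emph{scheme decomposition}: every precubic bipartite unicellular toroidal map decomposes uniquely as a cubic bipartite unicellular toroidal \emph{scheme} (having one black node, one white node and three connecting edges, with only a small explicit list of combinatorial types) on which each scheme edge is subdivided into a path of plain edges and decorated at each subdivision point by a bipartite binary tree (whose GFs again reduce to $\rb,\rw$). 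The balance condition of $\cUbal$ selects those decorations for which, along each non-contractible cycle of the scheme, the plain half-edges on the two sides are equal in number; these linear constraints are enforced by a cycle-lemma / coefficient-extraction argument, yielding a closed rational form for $U(\zb,\zw)$ in $\rb,\rw$.

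Finally, substituting into $T=3UD'$, expressing $\zb,\zw$ through $\rb=\zb(1+\rw)^2$ and $\rw=\zw(1+\rb)^2$, and collecting everything with the abbreviations $s=1+\rb+\rw$ and $p=\rb\rw$, yields the announced formula. The main obstacle is the enumeration of $\cUbal$, and specifically the handling of the balance condition: without balance, a sum over scheme shapes and free tree insertions would produce only a simple pole at $s=3p$ (the natural resonance between tree GFs and plain-edge insertions along a scheme edge); the two independent non-contractible directions of the torus give rise to two such resonances coupled through the balance constraint, and it is this coupling that upgrades the denominator of $T$ to $(s-3p)^2$. All remaining steps --- the planar bijection of~\cite{FuPoScL}, the vertex-count bookkeeping, and the algebraic simplification --- are routine.
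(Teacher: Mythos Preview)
Your outline follows the same route as the paper: reduce $T$ to the enumeration of $\cUbal$ via~\eqref{eq:TQ}, Claim~\ref{claim:decomp_HQ} and Theorem~\ref{theo:bij_psi}, take $D'=\Rb\Rw$ from~\cite{FuPoScL}, and compute the $\cUbal$ series by a kernel/scheme decomposition. But several steps, as written, are either wrong or left undone.

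First, the bookkeeping is off. The hexagonal face of $H\in\cH$ need not have six distinct vertices (the smallest $H$ has one black and one white vertex, three edges, one face), so ``excluding the three black and three white hexagon vertices'' and the factor $(\zb\zw)^{-3}$ are ill-defined; and ``each element of $\cH$ contributes three elements to $\cH'$'' fails whenever an automorphism permutes the white hexagonal corners. The paper sidesteps this by staying with rooted objects on both sides: it counts $\cH'$ by all vertices, compares it with \emph{kernel-rooted} $\cUbal$ via the double count $6H=3N$, and gets $T=\Rb\Rw\,N/2$.

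Second, your scheme is wrong: the two kernel vertices inherit their colours from $U$, and the three cases $\bullet\bullet$, $\bullet\circ$, $\circ\circ$ all occur (the paper accordingly splits $N=\Nbb+2\Nbw+\Nww$). Keeping only ``one black node, one white node'' would produce the wrong numerator.

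Third, and most importantly, the balance condition is not dispatched by a generic cycle-lemma. The paper's key step is that a kernel-rooted skeleton is an ordered triple of bi-rooted \emph{caterpillars}, and balancedness is exactly the condition that the three caterpillars share the same $\gamma$-score $i$. One then needs the caterpillar series at fixed $i$ (a lattice-walk series $P^{(i)}(t)$), must sum the cubes over $i\in\mathbb{Z}$, and simplify using $U+U^{-1}=1/t-2$ (with $U$ the Dyck series) to obtain a rational function of $\tb\tw$. This explicit computation is what produces the factor $(1-4\tb\tw)^2$, hence $(s-3p)^2$ after the substitution $\tb=\zb\Rw$, $\tw=\zw\Rb$; your heuristic about ``two independent non-contractible directions'' is suggestive but is not a proof and does not replace this calculation.
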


The initial terms of the series are 
\begin{align*}
T=&\zb\zw+(\zb^2\zw+\zb\zw^2)+11\zb^2\zw^2+(20\zb^3\zw^2+20\zb^2\zw^3)+(10\zb^4\zw^2+146\zb^3\zw^3+10\zb^2\zw^4)\\
&+(329\zb^4\zw^3+329\zb^3\zw^4)+(300\zb^5\zw^3+2047\zb^4\zw^4+300\zb^3\zw^5)+\cdots,
\end{align*}
see Figure~\ref{fig:first_examples} below for an illustration (maps in $\cT'$ with at most $4$ edges).

\begin{figure}[!h]
\center
\includegraphics[scale=0.6]{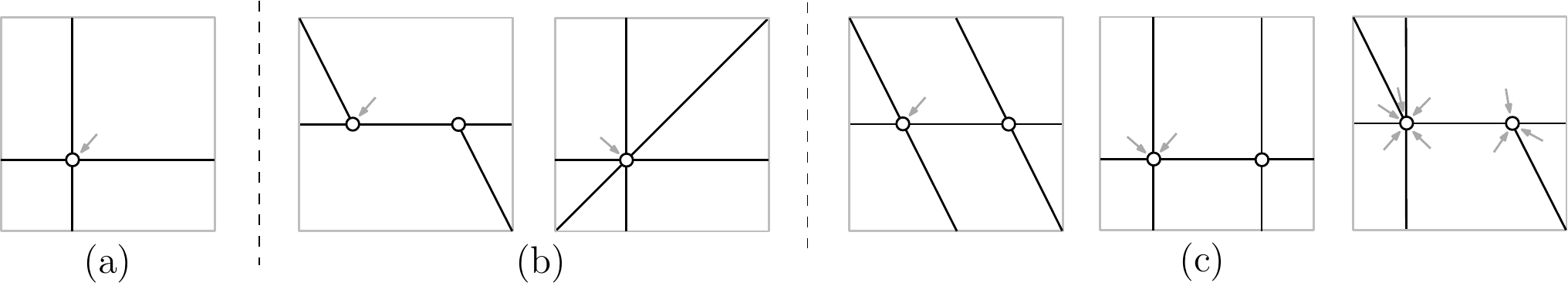}
\caption{(a) The unique map in $\cT'$ with $2$ edges. (b) The two maps in $\cT'$ with $3$ edges (the left one 
has one face and two vertices, the right one has two faces and one vertex). (c) The $1+2+8=11$ maps in $\cT'$ with $4$ edges (all have two vertices and two faces).}
\label{fig:first_examples}
\end{figure}

A rational expression in terms of $\rb,\rw$ actually exists in any genus.    
Precisely, in genus~$g$ we call a map  \emph{essentially $3$-connected} 
 if it is $3$-connected in the periodic representation 
(in the Poincar\'e disk when $g>1$), 
and let $T^{(g)}\equiv T^{(g)}(\zb,\zw)$ be the associated bivariate generating function with respect to the numbers of faces and vertices. In genus~$g$ we call a quadrangulation \emph{essentially irreducible} if 
it has no closed walk of length $\leq 4$ enclosing a contractible region that is not a face. 
The bijection of Claim~\ref{claim:bijMQ} holds in any genus~\cite{RoVi90},  
so that $T^{(g)}$ is also the series of edge-rooted bipartite essentially irreducible quadrangulations of genus $g$, 
counted by black vertices and white vertices. It can be shown that $T^{(g)}(\zb,\zw)$ has an explicit rational expression
 in terms of $\rb$ and $\rw$. 
This can be done by a substitution approach (see e.g.~\cite{chapuy2011asymptotic}) 
that relates $T^{(g)}$ to the bivariate series of edge-rooted bipartite quadrangulations of genus $g$, for which an explicit  algebraic expression is known~\cite{bender1993asymptotic,arques1999enumeration}.  Our 
derivation (detailed in Section~\ref{sec:deriv}) is the first bijective one in genus $1$ 
(a bijective derivation in genus $0$ is given in~\cite{FuPoScL}). 

\subsubsection{Univariate specializations}
We mention here some univariate specializations of Theorem~\ref{thm:counting}. First, note that $T_e(z):=T(z,z)$
is the generating function of $\cT'$ by edges, the expression in Theorem~\ref{thm:counting} becomes
\begin{equation}
T_e(z) = \frac{r^2(1+r)}{(1+2r)(1-r)^2(1+3r)},\ \ \ \mathrm{with}\ r\equiv r(z)\ \mathrm{given\ by\ }r=z(1+r)^2.
\end{equation}The initial terms are 
$T_e(z)=z^2+2z^3+11z^4+40z^5+166z^6+658z^7+2647z^8+10592z^9+\cdots$. This is sequence A308524 in the OEIS. 

On the other hand, $T_v(z):=T(1,z)$ is the generating function of $\cT'$ by vertices, and
 the expression in Theorem~\ref{thm:counting} becomes
\begin{equation}
T_v(z) = \frac{(r+1)(r^2+3r+4)r}{(3r^2+2r-2)^2(r+2)},\ \ \ \mathrm{with}\ r\equiv r(z)\ \mathrm{given\ by\ }r=z(2+2r+r^2)^2.
\end{equation}
 The initial terms are 
$T_v(z)=2z+42z^2+892z^3+18888z^4+399280z^5+8431776z^6+\cdots$. This is sequence A308526 in the OEIS.

We now consider the specialization to triangulations. Similarly as in genus $0$, a toroidal triangulation 
is essentially 3-connected iff it is essentially simple. 
Note that in a toroidal essentially 3-connected map, all faces have degree at least $3$, hence the numbers $i,j$ of vertices and faces satisfy $2i\geq j$, with equality iff the map is a triangulation.  
Hence if we let $T_t(z)$ be the generating function of rooted  essentially simple toroidal triangulations counted by vertices, then we have 
\[T_t(z)=\lim_{a\to 0}T(1/a,za^2).\]
 We have
$\{\rb=a^{-1}(1+\rw)^2,\ \rw=za^2(1+\rb)^2\}$, hence $\rw=z(a+(1+\rw)^2)^2$. If we let $r:=\lim_{a\to 0}\rw$,
then we have $r=z(1+r)^4$. Since $\rb$ dominates $\rw$ as $a\to 0$ ($\rb$ being of order $1/a$), we find, as $a\to 0$, 
\[
p\sim \rb r,\ \ s-p/s\sim \rb,\ \  (s-3p)^2\sim \rb^2(1-3r)^2. 
\]
Hence, from the expression in Theorem~\ref{thm:counting} we obtain
\begin{equation}\label{eq:serT}
T_t=\frac{r}{(1-3r)^2},\ \ \ \mathrm{with}\ r\equiv r(z)\ \mathrm{given\ by\ }r=z(1+r)^4.
\end{equation}
The initial terms are $T_t=z\!+\!10z^2\!+\!97z^3\!+\!932z^4\!+\!8916z^5\!+\!85090z^6\!+\!810846z^7+\cdots$. This is sequence A308523 in the OEIS.  
This expression of $T_t$ has also recently been derived bijectively in~\cite[Proposition 26]{FL18}, and actually the bijection  for triangulations given there can be seen as a specialization of the bijection we develop here, as we will see in Section~\ref{sec:bij_tr}.

\subsubsection{Proof of Theorem~\ref{thm:counting}}\label{sec:deriv}
The combinatorial proof of Theorem~\ref{thm:counting} that we present in this section can be seen as a bivariate adaptation of the proof of~\eqref{eq:serT} given in~\cite[Sect.5.2]{FL18} (itself an adaptation of the calculations in~\cite{CMS09} for
bipartite quadrangulations of genus $g$).  
A first remark is the combinatorial interpretation of the two series $\rb,\rw$. 
A \emph{bipartite binary tree} is a planar bipartite precubic map. A black-rooted (resp. white-rooted) binary tree
is a bipartite binary tree with a marked pending edge incident to a black (resp. white) node. 
Then clearly $\rb$ and $\rw$ are the generating functions of black-rooted and white-rooted binary trees,
counted with respect to the numbers of black nodes and white nodes. We also let $\Rb=1+\rb$ and $\Rw=1+\rw$. 

Let $D\equiv D(\zb,\zw)$ be the generating function of $\cD'$ with $\zb$ (resp. $\zw$) conjugate to the number of 
black (resp. white) vertices that are not incident to the hexagonal face. Then it follows from~\cite[Sect.5.1]{FuPoScL} 
that $\cD'$ is in bijection with bipartite binary trees with a marked edge (pending or plain), giving
\[
D=\Rb\Rw.
\]
We now let $Q\equiv Q(\zb,\zw)$ (resp. $H\equiv H(\zb,\zw)$) be the generating function of $\cQ'$
(resp. of $\cH'$) with $\zb,\zw$ conjugate to the numbers of black and white vertices, respectively. 
Note that $T=Q$ according to~\eqref{eq:TQ}. Moreover, it follows from~\eqref{eq:QH} that $Q=H\ \!\!D$, hence we have
\begin{equation}\label{eq:T}
T=\Rb\Rw H.
\end{equation}
It thus remains to compute $H$. For a toroidal unicellular map $U$,
the \emph{core} $C$ of $U$ is obtained from $U$
by successively deleting leaves, until there is 
no leaf (so $C$ has all its vertices of degree at
least $2$; the deleted edges form trees attached
at vertices of $C$). In $C$ we call \emph{maximal chain}
a path $P$ whose extremities have degree larger than $2$
and all non-extremal vertices of $P$ have degree $2$. 
Then the \emph{kernel} $K$ of $U$ is obtained
from $C$ by replacing every maximal chain by an edge.
The kernel of a toroidal unicellular
map is either made of one vertex with two loops (double loop) 
or is made of $2$ vertices and $3$ edges joining them
(triple edge). Note that in the first case, there is a vertex of
degree at least four. So this case never occurs for elements of
$\mathcal U$. Thus elements of $\mathcal U$ have six half-edge in
their associated kernel. A map in $\mathcal{U}$ is called \emph{kernel-rooted} if 
one of the six half-edges is marked. 

Let $N(\zb,\zw)$ be the generating function of kernel-rooted maps from $\cUbal$ with $\zb,\zw$ conjugate 
to the numbers of black and white vertices, respectively. The bijection of Theorem~\ref{theo:bij_psi} (on unrooted objects) and a classical 
double-counting argument (considering the number of potential rootings, choice among $6$ half-edges for kernel-rooted maps, and among $3$ white corners 
for $6$-quadrangular maps) ensure that $6H=3N$, so that 
\begin{equation}\label{eq:H}
H=N/2.
\end{equation}
Hence it remains to express $N$ in terms of $\rb,\rw$.  
Note that the generating function $N$ splits as
\begin{equation}\label{eq:N}
N=\Nbb+\Nwb+\Nbw+\Nww=\Nbb+2\Nbw+\Nww,
\end{equation}
depending on the colors of the two vertices $v_1,v_2$ of the 
kernel (with $v_1$ the one incident to the marked half-edge), and 
where the second equality follows from $\Nbw(z)=\Nwb(z)$, 
since $v_1$ and $v_2$ play symmetric roles.

A \emph{skeleton} is a toroidal unicellular precubic map such that every node belongs to the core. 
Observe that a map $U\in\cU$ is balanced if and only if its skeleton is balanced. We let $\Sbb,\Sbw,\Sww$
be the generating functions gathering the respective contributions from $\Nbb/(\zb^2),\Nbw/(\zb\zw),\Nww/(\zw^2)$ that are skeletons. 
We clearly have

\begin{equation}\label{eq:NrelS}
\Nww = \zw^2 \Sww(\zb\Rw,\zw\Rb),\ \Nbb = \zb^2 \Sbb(\zb\Rw,\zw\Rb),\ \Nbw = \zb\zw \Sbw(\zb\Rw,\zw\Rb).
\end{equation}

A \emph{bi-rooted caterpillar} is a bipartite binary tree with two marked leaves $v_1,v_2$, called \emph{primary root}  and \emph{secondary root}, such that every node is on the path $\pi$ from $v_1$ to $v_2$. The \emph{$\gamma$-score} of a bi-rooted caterpillar is the number of non-root leaves on the right-side of $\pi$ minus the number of non-root leaves
on the left-side of $\pi$. 

Clearly a skeleton $S$ (with a marked half-edge in the kernel)  
 decomposes into an ordered triple of  bi-rooted caterpillars,   
and $S$ is balanced if and only if the $3$ bi-rooted caterpillars have the 
same $\gamma$-score.  
Hence, if for $i\in\mathbb{Z}$ we 
let $\Cbbi(\tb,\tw)$, $\Cbwi(\tb,\tw)$, $\Cwwi(\tb,\tw)$ be the generating functions 
of bi-rooted caterpillars of $\gamma$-score $i$ 
where $v_1,v_2$ are black/black (resp.
black/white, white/white), and with $\tb$ (resp. $\tw$) conjugate to the number of black (resp. white) nodes, then we find

$$\Sbb(\tb,\tw)\!=\!\sum_{i\in\mathbb{Z}}\Cbbi(\tb,\tw)^3,\ \ \Sbw(\tb,\tw)\!=\!\sum_{i\in\mathbb{Z}}\Cbwi(\tb,\tw)^3,\ \ \Sww(\tb,\tw)\!=\!\sum_{i\in\mathbb{Z}}\Cwwi(\tb,\tw)^3.$$


For $i\in\zZ$, let $p_{n,i}$ be the number
 of walks of length $n$ with steps in $\{-1,1\}$,
starting at $0$ and ending at $i$ (note that 
$p_{n,i}=0$ if $i\neq n\mathrm{\ mod\ }2$). 
We also define the generating
function of walks ending at $i$ as 
$$
\Pi(t)=\sum_{n\geq 0}p_{n,i}t^{\lfloor n/2\rfloor}.
$$ 
We clearly have for $i\in\zZ$, 

$\begin{array}{ll}
\Cwwi(\tb,\tw)=0\ \mathrm{for}\ i\ \mathrm{even},& 
\Cwwi(\tb,\tw)=\tb\cdot\Pi(\tb\tw)\ \ \mathrm{for}\ i\ \mathrm{odd}.\\
\Cbbi(\tb,\tw)=0\ \mathrm{for}\ i\ \mathrm{even},&
\Cbbi(\tb,\tw)=\tw\cdot\Pi(\tb\tw)\ \ \mathrm{for}\ i\ \mathrm{odd}.\\
\Cbwi(\tb,\tw)=0\ \mathrm{for}\ i\ \mathrm{odd},&
\Cbwi(\tb,\tw)=\Pi(\tb\tw)\ \ \mathrm{for}\ i\ \mathrm{even}.
\end{array}$

We have $\Pi(t)=P^{(-i)}(t)$ for $i<0$, 
and for $i>0$ we classically have (see~\cite{PoSc04} for more details on the decomposition at the last visits to $0,1,\dots,i-1$) 
$$
\Pi(t)=B\cdot(1+U)^i\cdot t^{\lfloor i/2\rfloor},
$$
where $U=t(1+U)^2$ is the series of non-empty Dyck paths, and $B=\frac1{1-2t\cdot(1+U)}=\frac{1+U}{1-U}$
is the series of bridges ($B$ is $P^{(0)}$), with $t$ conjugate to the half-length.

Hence we have
\[
\Sww=\tb^3
\sum_{\substack{i\in\zZ\\i\ \mathrm{odd}}}\Pi(t)^3\Big|_{t=\tb\tw}=2\tb^3\frac{B^3\cdot(1+U)^3}{1-t^3(1+U)^6}\Big|_{t=\tb\tw}=2\tb^3\frac{B^3\cdot(1+U)^3}{1-U^3}\Big|_{t=\tb\tw}
\]
and replacing $B$ by $(1+U)/(1-U)$ we obtain
\[
\Sww=\frac{2\tb^3(U + 2 + U^{-1})^3}{(U+1+U^{-1})(U-2+U^{-1})^2}.
\]

Similarly as in~\cite[Sect.4.5]{CMS09}, we observe that $\Sww/\tb^3$ is rational
in $U+U^{-1}$,  so it is also rational in $\tb\tw$ since
$U+U^{-1}=1/t-2$ and $t=\tb\tw.$ Overall we obtain

$$\Sww = \frac{2\tb^3}{(1-\tb\tw)(1-4\tb\tw)^2}.$$



Using~\eqref{eq:NrelS} we deduce then

$$\Nww = \frac{2\zw^2\zb^3\Rw^3}{(1-\Rb\Rw\zb\zw)(1-4\Rb\Rw\zb\zw)^2}=\frac{2\rb^3\rw^2}{(1+\rb)(1+\rb+\rw)(1+\rb+\rw-3\rb\rw)^2},$$
where we have used $\Rb=\rb+1$, $\Rw=\rw+1$, $\zb=\rb/(1+\rw)^2$, $\zw=\rw/(1+\rb)^2$.



Very similarly we get rational expressions for $\Nbb$ and $\Nbw$ in terms of $\rb,\rw$.
Adding them up we obtain
\[
N=\frac{2\rb\rw(1+2\rb+2\rw+\rb\rw+\rb^2+\rw^2)}{(1+\rb)(1+\rw)(1+\rb+\rw)(1+\rb+\rw-3\rb\rw)^2},
\]
which together with~\eqref{eq:T} and~\eqref{eq:H} completes the proof of Theorem~\ref{thm:counting}.






\section{The inverse bijection $\phi$ from $\cH$ to $\cUbal$}\label{sec:inverse}

In this section we present the inverse bijection $\phi$ of $\psi$. The different steps to define this bijection are illustrated in~Figure~\ref{fig:inverse}.

In the first step, we add a black dummy vertex in the hexagonal face of $H$ and connect it to its 3 white vertices (see Figure~\ref{fig:inverse}~(a)). Then, considering the angular map of the resulting map, we compute the unique minimal balanced Schnyder orientation (see Figure~\ref{fig:inverse}~(b) and Section~\ref{ssec:orientations}). Thanks to some local rules (see Figure~\ref{fig:local_rule_biorient}) we compute a particular biorientation of $H$ that is called \emph{canonical} (see Figure~\ref{fig:inverse}~(c) and Section~\ref{ssec:biorientations}). The last step of the bijection consists in removing the ingoing half-edges of the biorientation to obtain an element of $\cUbal$.  Finally in Section~\ref{sec:bij_tr} we show that $\phi$ can be specialized so as to recover the bijection for toroidal triangulations given in~\cite[Theo.19]{FL18}.

\subsection{Orientations}\label{ssec:orientations}
Let $M$ be a toroidal map endowed with an orientation  $X$. 
For $C$ a non-contractible cycle of $M$ given with a
traversal direction of $C$, we denote by $\gamma_R(C)$
(resp. $\gamma_{L}(C)$) the total number of edges going out of a
vertex on $C$ on the right (resp. left) side of $C$, and we define the
\emph{$\gamma$-score} of $C$ as $\gamma(C)=\gamma_R(C)-\gamma_L(C)$. 

For $G=(V,E)$ a graph and $\alpha:V\to \mb{N}$, an \emph{$\alpha$-orientation} of $G$ (see~\cite{Fe03}) is an orientation of $G$ such that every vertex $v\in V$ has outdegree $\alpha(v)$.  
For $M$ a toroidal map, two $\alpha$-orientations $X,X'$ of $M$ are
 called \emph{$\gamma$-equivalent} if every non-contractible cycle of
 $M$ has the same $\gamma$-score in $X$ as in $X'$. 

Assume $M$ is a face-rooted toroidal map, with $f_0$ its root face.  An orientation
of $M$ is called \emph{non-minimal} if there exists a non-empty set $S$ of faces
such that $f_0\notin S$ and every edge on the boundary of $S$ has a
face in $S$ on its right (and a face not in $S$ on its left).  It is
called \emph{minimal} otherwise. 

The following result is Corollary~3 in~\cite{FL18} (as explained in~\cite[Sect.2.2]{FL18}, it follows from a closely related result  proved in~\cite{Pro93} and stated as Theorem~2 in~\cite{FL18}):

\begin{theorem}[\cite{FL18}]
  \label{theo:gamma}
Let $M$ be a face-rooted toroidal map that admits an $\alpha$-orientation $X$.  
Then $M$ has a unique $\alpha$-orientation
 $X_0$ that is minimal and $\gamma$-equivalent to $X$.

 Moreover, for two $\alpha$-orientations $X,X'$ of $M$ to be $\gamma$-equivalent, it 
is enough that $X$ and $X'$ have the same $\gamma$-score on two non-contractible
non-homotopic\footnote{Two closed curves on a surface are called \emph{homotopic} if one can be continuously deformed into the
	other.} cycles of $M$.
\end{theorem}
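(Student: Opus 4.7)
The plan is to reduce both statements to a homological analysis of the symmetric difference between two $\alpha$-orientations, then apply a Propp-style minimality argument to the contractible part of that difference.

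First I would set up the cycle-difference machinery. Given two $\alpha$-orientations $X,X'$ of $M$, let $\Delta(X,X')$ be the spanning subgraph formed by the edges on which the two orientations disagree, each edge carrying the direction it has in $X'$. Since every vertex has the same outdegree in $X$ as in $X'$, and the same indegree (because the total degree is fixed), $\Delta(X,X')$ is Eulerian as a directed subgraph, hence decomposes (non-uniquely) into a finite disjoint union of directed simple cycles $C_1,\dots,C_k$. On the torus each $C_i$ is either contractible or non-contractible, and the class $[\Delta(X,X')]:=\sum_i [C_i]\in H_1(\mathbb{T}^2;\zZ)\cong\zZ^2$ is well defined and independent of the decomposition.

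Next I would argue that $X$ and $X'$ are $\gamma$-equivalent if and only if $[\Delta(X,X')]=0$. The point is that for any non-contractible cycle $C$ of $M$ traversed in a fixed direction, the difference $\gamma_{X'}(C)-\gamma_X(C)$ is an algebraic intersection number between $\Delta(X,X')$ and $C$: reorienting an edge from left-outgoing to right-outgoing across $C$ contributes $+2$, and contractible pieces of $\Delta$ contribute $0$ since a contractible cycle has zero algebraic intersection with any closed curve. Thus the $\gamma$-shift on $C$ factors through the intersection pairing $H_1(\mathbb{T}^2;\zZ)\times H_1(\mathbb{T}^2;\zZ)\to\zZ$, which is unimodular. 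So $X\sim_\gamma X'$ forces $[\Delta(X,X')]$ to intersect every homology class trivially, hence to vanish. The \emph{moreover} part is then immediate: since the intersection form is non-degenerate and $H_1(\mathbb{T}^2;\zZ)$ has rank $2$, two non-contractible non-homotopic cycles span a finite-index subgroup, so coinciding $\gamma$-scores on them force $[\Delta(X,X')]=0$ (hence coincidence on all non-contractible cycles).

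With $\gamma$-equivalent reduced to ``$\Delta$ is a $\zZ$-combination of contractible cycles,'' the first statement becomes essentially planar. I would define a partial order on $\alpha$-orientations inside a $\gamma$-equivalence class by $X\preceq X'$ iff $X'$ is obtained from $X$ by a sequence of clockwise flips along contractible directed cycles enclosing a face-set $S$ with $f_0\notin S$. Non-minimality of $X$ in the sense of the statement is precisely the existence of such an $S$ whose boundary is a clockwise directed contractible cycle, so minimality coincides with being a minimum of $\preceq$ inside the class. Existence of a minimum follows by iteratively flipping non-minimal orientations and controlling termination via a potential function (for instance, $\sum_{f\ne f_0}\mathrm{depth}(f)$, the sum over faces of their distance to $f_0$ in the dual, measured on the flipping face-set). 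Uniqueness follows because if two distinct minimal orientations $X_0,X_0'$ existed, $\Delta(X_0,X_0')$ would be a non-empty union of contractible directed cycles; taking an innermost one bounding a face-set $S$ not containing $f_0$ would exhibit non-minimality of $X_0$ or $X_0'$, a contradiction. This is the standard Propp/Felsner argument, adapted by using $f_0$ as the distinguished ``outer face.''

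The main obstacle I expect is making the intersection-number bookkeeping completely rigorous, in particular checking that contractible flips really contribute $0$ to every $\gamma$-score and that the sign conventions match across the various cycles. Once that is clean, the existence/uniqueness of a minimum and the rank-$2$ homology argument for the \emph{moreover} part are straightforward, and we inherit the full strength of Theorem~2 of~\cite{FL18} (equivalently Propp's theorem in~\cite{Pro93}).
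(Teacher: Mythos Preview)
The paper does not prove this theorem at all: it is quoted verbatim as Corollary~3 of~\cite{FL18}, which in turn derives it from Propp's result~\cite{Pro93}. So there is no ``paper's own proof'' to compare against; what you have written is essentially a reconstruction of the argument one finds in those references, and the overall strategy (identify $\gamma$-equivalence with vanishing of $[\Delta(X,X')]$ in $H_1(\mathbb{T}^2;\zZ)$ via the intersection pairing, then run the Propp/Felsner lattice argument on the contractible part) is the right one.

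Two remarks on the details. First, your \emph{moreover} argument implicitly uses that the two given cycles have $\mathbb{Q}$-independent homology classes. This is true but deserves a sentence: a cycle of $M$ (no repeated vertices) is a simple closed curve on the torus, hence its homology class is primitive; two primitive classes that are not equal or opposite are automatically linearly independent over~$\mathbb{Q}$, and non-homotopic cycles cannot have equal or opposite classes.

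Second, your uniqueness argument has a genuine gap as written. From $[\Delta(X_0,X_0')]=0$ you conclude that $\Delta$ ``would be a non-empty union of contractible directed cycles'' and then pick an innermost one. But an Eulerian directed subgraph with trivial homology class need not decompose into \emph{contractible} simple directed cycles: for instance, two disjoint non-contractible directed cycles with opposite homology classes have trivial total class yet admit no such decomposition. The correct route (and the one in~\cite{Pro93,FL18}) is to use the height/potential function on faces: since $[\Delta]=0$, the $2$-chain it bounds gives an integer function $h$ on faces with $h(f_0)=0$; the face-set where $h$ attains its maximum is then a nonempty $S\not\ni f_0$ whose boundary is coherently oriented in one of $X_0,X_0'$, contradicting minimality. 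Your closing reference to ``inherit[ing] the full strength of Theorem~2 of~\cite{FL18}'' suggests you have this in mind, but the intermediate sentence about innermost contractible cycles should be replaced by the potential-function argument.
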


For $M$ an essentially $3$-connected toroidal map, with $\hM$ its derived map, 
a \emph{Schnyder orientation} of $\hM$ is an orientation of $\hM$ such that every 
primal or dual vertex has outdegree $3$, and every edge-vertex has outdegree $1$. 
A Schnyder orientation is called \emph{balanced} if for every non-contractible cycle
$C$ not passing by dual vertices, one has $\gamma(C)=0$. 

The following result is shown in~\cite[Theorem 17, Lemma
8 and Proposition 11]{LevHDR} (the algorithm to compute the orientation is based on repeated 
contraction operations done according to a careful case analysis):

 \begin{theorem}[\cite{LevHDR}]
\label{th:existence}
   Let $M$ be an   essentially 3-connected  toroidal map. Then $\hM$ 
 admits a balanced Schnyder orientation. 
\end{theorem}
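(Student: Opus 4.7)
My plan is to prove the theorem in two independent stages: first produce \emph{any} Schnyder orientation of $\hM$, then modify it by cycle reversals until it becomes balanced. The global feasibility is automatic: $\hM$ has $4|E(M)|$ edges and the total prescribed outdegree is $3|V(M)|+3|F(M)|+|E(M)|$, which equals $4|E(M)|$ by Euler's formula on the torus.

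\textbf{Stage~1 (existence of some Schnyder orientation).}
I would rely on the standard Hall-type (equivalently, max-flow/min-cut) criterion for the existence of an $\alpha$-orientation: it suffices that for every non-empty subset $S$ of vertices of $\hM$ the number of edges of $\hM$ with both endpoints in $S$ is at most $\sum_{v\in S}\alpha(v)$. Any violation would produce a region of $\hM$ whose boundary is too short relative to its content; translating back through Claim~\ref{claim:bijMQ}, this yields a short closed walk of the angular map of $M$ bounding a contractible non-facial region, contradicting essential irreducibility. Hence some Schnyder orientation $X_0$ exists.

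\textbf{Stage~2 (adjusting to balanced).}
Let $\hM'$ denote the subgraph of $\hM$ induced by the primal and edge-vertices; topologically $\hM'$ is obtained by subdividing each edge of $M$ with its midpoint, so as a graph drawn on the torus it realizes the full homology $H_1(\mathbb{T}^2)\cong\zZ^2$. Pick two cycles $C_1,C_2$ of $\hM'$ whose homology classes form a $\zZ$-basis of $H_1(\mathbb{T}^2)$. Over all Schnyder orientations of $\hM$ (all reachable from $X_0$ by directed cycle reversals), let $X^\star$ minimize the integer functional $f(X):=|\gamma(C_1)|+|\gamma(C_2)|$. The main claim is $f(X^\star)=0$. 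Otherwise, a flow-decomposition argument inside $\hM'$ (using the rigid outdegree-$1$ flow through edge-vertices) produces a non-contractible directed cycle $\Gamma$ of $X^\star$ entirely contained in $\hM'$; by choosing $\Gamma$ with an appropriate homology class (parallel to whichever of $C_1,C_2$ one wants to leave invariant), its reversal gives another Schnyder orientation in which $f$ has strictly decreased, contradicting the minimality of $f(X^\star)$.

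\textbf{Conclusion and main obstacle.}
Once $\gamma(C_1)=\gamma(C_2)=0$ in $X^\star$, additivity of $\gamma$ along concatenations of cycles (which underlies the second part of Theorem~\ref{theo:gamma} and follows because the difference of two homologous cycles is a signed sum of face contours, each contributing zero in an $\alpha$-orientation) shows that every non-contractible cycle $C$ of $\hM$ avoiding dual vertices, having homology class $aC_1+bC_2$, satisfies $\gamma(C)=a\gamma(C_1)+b\gamma(C_2)=0$, so $X^\star$ is balanced. The hardest step is producing the reversible cycle $\Gamma$ in Stage~2 with prescribed homology class: the outdegree-$1$ constraint at edge-vertices makes the local flow structure very rigid, so actually exhibiting $\Gamma$ in the dual-free subgraph $\hM'$ will likely require a careful case analysis, or, as in~\cite{LevHDR}, an inductive construction via controlled edge-contractions on $M$ with a matching expansion rule for the Schnyder orientation.
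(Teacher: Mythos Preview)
The paper does not prove Theorem~\ref{th:existence}; it is quoted from~\cite{LevHDR}, where (as the paper notes) the argument proceeds by repeated edge-contractions with a careful case analysis. Your two-stage strategy (Hall criterion for existence, then cycle reversals to balance) is a genuinely different route.

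Both stages, however, have real gaps. In Stage~1 the Hall condition must hold for \emph{every} vertex subset $S$ of $\hM$, and on the torus a tight subset need not fill a contractible disk: it may fill an annulus or the complement of a disk, and then its boundary is not a single contractible closed walk. Your reduction to a short contractible walk contradicting essential irreducibility therefore does not cover all cases. In Stage~2 the crux is exactly what you postpone: assuming $f(X^\star)>0$, you must exhibit a directed cycle $\Gamma$ whose homology class has the correct intersection numbers with $C_1,C_2$ \emph{and} the correct winding sense, so that reversing it strictly decreases $f$. A generic flow decomposition does not give this; all non-contractible directed cycles of $X^\star$ could a priori lie in a single homology class, or be oriented the wrong way. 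This is precisely the place where~\cite{LevHDR} resorts to the inductive contraction scheme you mention at the end, so as written your proposal effectively defers back to the cited proof. (Incidentally, restricting $\Gamma$ to $\hM'$ only hurts you and is unnecessary: reversing any directed cycle of $\hM$, even one through dual vertices, changes $\gamma(C_i)$ via the shared edge-vertices.)

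A smaller but concrete error: in your Conclusion the claim that face contours have $\gamma$-score zero in an $\alpha$-orientation is false; for a quadrangular face of $\hM$ (vertex outdegrees $3,1,3,1$) one computes $\gamma=-4$ with the face on the right. What you actually need, namely that for cycles avoiding dual vertices the $\gamma$-score depends only on the homology class, is true but requires a different argument. In the paper this is obtained from Claim~\ref{claim:triangul} (imported from~\cite{GL14,FL18}) applied to the induced $3$-orientation on $T(\hM)$, as in the proof of Lemma~\ref{lem:bal_pseudo_bal}.
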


Then a direct consequence of
Theorem~\ref{theo:gamma} is the following:

 \begin{theorem}
   \label{th:unicity}
 Let $M$ be a face-rooted essentially 3-connected toroidal map. Then $\hM$ 
 admits a unique minimal balanced Schnyder orientation.
\end{theorem}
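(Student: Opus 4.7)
The plan is to combine the existence result of Theorem~\ref{th:existence} with the uniqueness statement of Theorem~\ref{theo:gamma}, viewing a balanced Schnyder orientation of $\hM$ as an $\alpha$-orientation for the function $\alpha$ that assigns $3$ to each primal and dual vertex and $1$ to each edge-vertex. Since $\hM$ is a face-rooted toroidal map (inheriting the root face from $M$), Theorem~\ref{theo:gamma} applies to every $\alpha$-orientation of $\hM$.

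For existence, I would start from any balanced Schnyder orientation $X$ of $\hM$, provided by Theorem~\ref{th:existence}, and feed it to Theorem~\ref{theo:gamma}. This produces a minimal $\alpha$-orientation $X_0$ that is $\gamma$-equivalent to $X$. Because $\gamma$-equivalence is defined as equality of $\gamma$-scores on \emph{every} non-contractible cycle, $X_0$ inherits from $X$ the property $\gamma(C)=0$ for every non-contractible cycle $C$ of $\hM$ that avoids dual vertices; hence $X_0$ is a minimal balanced Schnyder orientation.

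For uniqueness, let $X_0$ and $X_0'$ be two minimal balanced Schnyder orientations. On the torus there are two independent homotopy classes of non-contractible simple closed curves, and since $M$ is a toroidal map (connected underlying graph, disk faces), each class contains a non-contractible cycle of $M$. Viewed inside $\hM$, such cycles pass only through primal vertices and edge-vertices, hence avoid dual vertices. Picking two such cycles $C_1, C_2$ in distinct homotopy classes, the balanced condition forces $\gamma(C_1)=\gamma(C_2)=0$ in both $X_0$ and $X_0'$. The second part of Theorem~\ref{theo:gamma} then implies that $X_0$ and $X_0'$ are $\gamma$-equivalent, and the uniqueness clause of its first part forces $X_0=X_0'$ by minimality.

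The main obstacle I anticipate is the last topological point: the balanced condition only constrains cycles avoiding dual vertices, so one really does need to realize both homotopy generators by cycles of $M$ inside $\hM$. This should be routine—the primal subgraph of $\hM$ is a copy of $M$, which as a torus map carries cycles in each of the two independent homotopy classes—but it is the one place where the specific geometry of $\hM$ (rather than the formal machinery of $\alpha$-orientations) enters the argument.
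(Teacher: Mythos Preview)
Your proposal is correct and follows exactly the route the paper intends: the paper simply states that Theorem~\ref{th:unicity} is ``a direct consequence of Theorem~\ref{theo:gamma}'' (together with the existence from Theorem~\ref{th:existence}), and you have spelled out precisely that deduction. The only small point worth tightening is the face-rooting of $\hM$: a root face of $M$ does not canonically single out a face of $\hM$, so you should say that the argument works for any chosen root face of $\hM$ (which is how the paper actually uses the result in Section~\ref{sec:desc_phi}).
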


An example is shown in Figure~\ref{fig:inverse}(b).


\subsection{Biorientations}\label{ssec:biorientations}
A \emph{biorientation} of a map is an orientation of its half-edges (each half-edge is either directed outward or inward the incident vertex) such that for each edge, at least one of its half-edges is outgoing. An edge is \emph{bidirected} if both of its half-edges are outgoing. An edge with an ingoing half-edge is \emph{simply directed}.
The \emph{outdegree} of a vertex $v$ is the number of outgoing half-edges incident to $v$. 
The \emph{ccw-degree} of a face $f$ is the number of simply directed edges that have $f$ on their
left. 
A \emph{3-biorientation} is a biorientation where every vertex has outdegree $3$. A 3-biorientation of a toroidal 
quadrangulation, or of a toroidal 6-quadrangular map, is called \emph{S-quad} if every quadrangular face
has ccw-degree $1$. 
For a 6-quadrangular toroidal map, an easy argument based on the Euler
relation ensures that the ccw-degree of the root-face has to be $0$ in any S-quad 3-biorientation.

Let $M\in \cT$ and let $Q\in\cQ$ be the angular map of $M$ (we recall that 
$\hM$ is the angular map of $Q$). For $X$ an S-quad 3-biorientation of $Q$,
we let $Y=\sigma(X)$ be the orientation of $\hM$ obtained by applying
the local rules shown in Figure~\ref{fig:transfer} (where $Q$ is
represented with red edges and $\hM$ with black edges). 

\begin{figure}[!h]
\center
\includegraphics[scale=0.98]{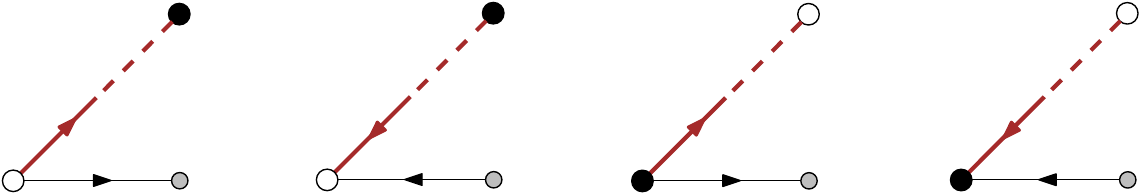}
\caption{The local rules in the mapping $\sigma$. }
\label{fig:transfer}
\end{figure}

 The fact that all vertices have outdegree $3$ in $X$ implies that the primal and dual vertices have
outdegree $3$ in $Y$. And the fact that every face of $X$ has ccw-degree $1$ implies that every 
edge-vertex in $Y$ has outdegree $1$. Finally the fact that the edges of $X$ have at least one outgoing 
half-edge implies that no face in $Y$ is clockwise. Conversely, starting from a Schnyder orientation of $\hM$ 
with no clockwise face and applying the local rules one obtains an S-quad 3-biorientation of $Q$. To summarize:

\begin{claim}
The mapping $\sigma$ is a bijection between the S-quad 3-biorientations of $Q$ and the Schnyder orientations
of $\hM$ with no clockwise face. 
\end{claim}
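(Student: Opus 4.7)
The plan is to prove both directions of the bijection by a local analysis of the rules in Figure~\ref{fig:transfer}. The key geometric observation is that each face of $\hM$ is a quadrangle corresponding to a corner of $M$, which in turn corresponds to a unique edge of $Q$. Thus the rules specify, for each possible biorientation state of an edge $e$ of $Q$ (namely: bidirected, simply directed one way, or simply directed the other way), the orientations of the four edges of $\hM$ bounding the face corresponding to $e$; to show $\sigma$ is well-defined one must verify that the prescriptions coming from the two faces of $\hM$ adjacent to any given edge of $\hM$ are compatible, which is built into the rules and should be checked by inspection.

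For the forward direction, I would make precise the sketch already given by the author, verifying the three characteristic properties of $\sigma(X)$. The outdegree-$3$ condition at primal and dual vertices of $\hM$ follows because these vertices coincide with vertices of $Q$ and the local rules are designed to preserve the count of outgoing half-edges at each such vertex. The outdegree-$1$ condition at each edge-vertex is a direct reformulation of the S-quad condition (ccw-degree $1$ at each quadrangular face of $Q$), since exactly one of the four edges of $\hM$ bounding the face is directed outward from the edge-vertex. The absence of clockwise faces in $\sigma(X)$ is ensured by the axiom that every edge of $Q$ has at least one outgoing half-edge: a clockwise face of $\hM$ would precisely require the corresponding edge of $Q$ to have both half-edges ingoing.

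For the inverse direction, I would describe a map $\tau$ that takes a Schnyder orientation $Y$ of $\hM$ without any clockwise face, and reconstructs a biorientation of $Q$ by inverting the local rules face by face of $\hM$. To show $\tau$ is well-defined, one must check that every local pattern of orientations arising in $Y$ around a face of $\hM$ (subject to the Schnyder outdegree conditions) matches exactly one of the three patterns produced by $\sigma$'s local rules. The crucial use of the no-clockwise-face hypothesis is precisely here: it rules out the only remaining pattern, which would correspond to an ``illegal'' biorientation state of the associated edge of $Q$ with no outgoing half-edge. I would then verify that $\tau(Y)$ satisfies the S-quad 3-biorientation conditions by reversing the three-point verification of the forward direction, and conclude that $\sigma$ and $\tau$ are mutually inverse because they are defined via the same local correspondence read in opposite directions.

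The main obstacle will be the case analysis for the inverse rules: one must systematically enumerate the possible local configurations in $Y$ around each face of $\hM$ (constrained by outdegree $1$ at the edge-vertex and non-clockwiseness) and check that each one is matched by exactly one biorientation state on the corresponding edge of $Q$. This verification is routine but somewhat tedious, and it is precisely at this point that the hypothesis of no clockwise face plays its pivotal role by making the case analysis exhaustive on both sides of the bijection.
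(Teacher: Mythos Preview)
Your proposal is correct and follows essentially the same approach as the paper, which gives only the brief sketch in the paragraph preceding the claim (the three forward implications and the one-sentence converse); you are simply fleshing out that sketch with the well-definedness check and the explicit inverse $\tau$. One small imprecision: when you write ``the four edges of $\hM$ bounding the face,'' you mean the four edges of $\hM$ incident to the edge-vertex sitting inside the quadrangular face of $Q$ (they do not bound a single face of $\hM$); with that correction the argument goes through exactly as you outline.
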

An example is shown in Figure~\ref{fig:inverse}(c). 

An S-quad 3-biorientation $X$ of $Q$ is called \emph{balanced} if the corresponding Schnyder
orientation $\sigma(X)$ is balanced. 

\begin{figure}
\center
\includegraphics[scale=0.77]{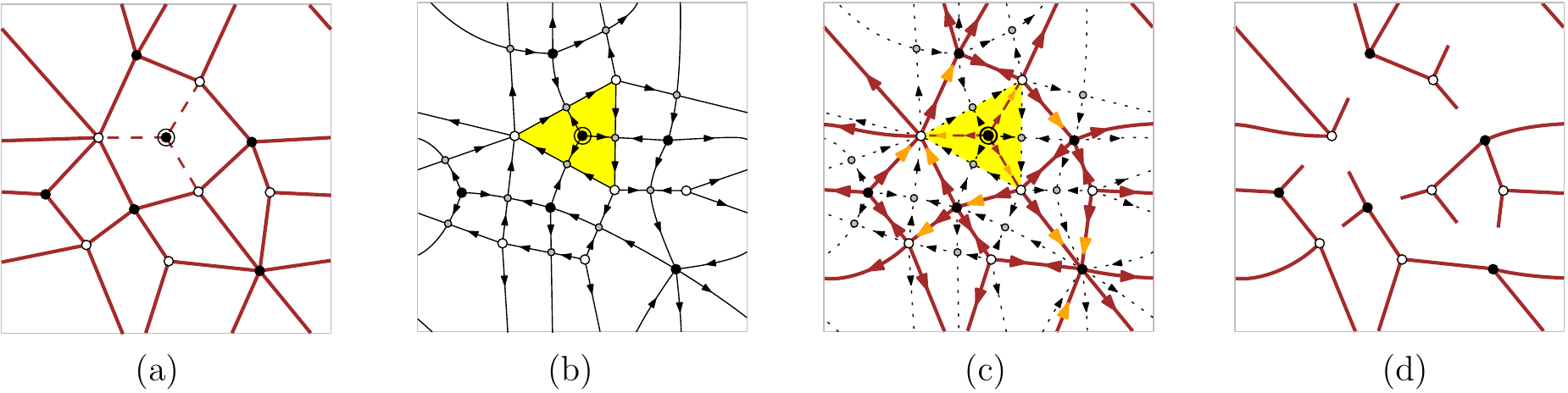}
\caption{(a) A toroidal map $H\in\cH$; adding the surrounded black vertex and the 3 dashed edges one obtains 
a bipartite quadrangulation $Q\in\cQ$. (b) The derived map $\hM$ (with $M$ the map whose angular map is $Q$), endowed with its minimal
balanced Schnyder orientation (minimal with respect to any of the 3 faces incident to the 
surrounded black vertex). (c) The canonical 3-biorientation of $Q$ (and then of $H$, deleting
the surrounded vertex and $3$ incident edges), obtained by applying the local rules of Figure~\ref{fig:transfer}; ingoing
half-edges are colored orange. (d) The unicellular map $\phi(H)=U\in\cUbal$ is obtained from $H$ 
by deleting the ingoing half-edges.}
\label{fig:inverse}
\end{figure} 

\subsection{Description of the bijection $\phi$}\label{sec:desc_phi}
Let $H\in\cH$, and let $Q$ be the bipartite quadrangulation 
obtained by adding a black vertex $v_0$ of degree $3$ inside the root-face
$f_0$ of $H$, and connecting $v_0$ to the $3$ corners $c_1,c_2,c_3$ at
white vertices around $f_0$, see Figure~\ref{fig:inverse}(a). 
If we let $B$ be the map in $\cD$ with a unique internal vertex that is black, then $Q$ is obtained
by patching $B$ within the root-face of $H$, hence $Q$ is in $\cQ$ according to Claim~\ref{claim:decomp_HQ}. 

Let $M$ be the map whose angular map is $Q$. Since $\hM$ is the angular map of $Q$, each face of $\hM$
corresponds to an edge of $Q$. We choose (arbitrarily) the root-face $f_0$ among the $3$ faces of $\hM$ 
incident to $v_0$. We let $Y$ be the minimal balanced
Schnyder orientation of $\hM$, see Figure~\ref{fig:inverse}(b).
 Since $v_0$ is a source in $Y$, the root-face contour is not a clockwise cycle. In addition 
the contours of the other faces are neither clockwise cycles by minimality. Hence $Y$ has no clockwise face.  
The fact that $v_0$ is a source also implies that $Y$ is minimal for any of the $3$ root-face choices, hence $Y$ does not depend on which of the $3$ faces incident to $v_0$ is chosen.
 Let $X'=\sigma^{-1}(Y)$ be the associated S-quad 3-biorientation of $Q$ (obtained using the rules of Figure~\ref{fig:transfer}).  
We let $X$ be the biorientation of $H$, called the \emph{canonical biorientation} of $H$, which is obtained from $X'$ by deleting $v_0$ and its 3 incident edges. We will see in Section~\ref{sec:suffi} (Lemma~\ref{lem:inO6}) that in $X'$, the 3 edges at $v_0$ are simply directed (out of $v_0$), hence $X$ is an S-quad 3-biorientation. 


We can now describe the mapping $\phi$. For $H$ endowed with its canonical biorientation $X$, we simply
let $\phi(H)$ be the map obtained by deleting all the ingoing half-edges (thus any simply directed edge is turned
into a pending edge), see Figure~\ref{fig:inverse}(c)-(d). 

\begin{theorem}\label{theo:bij_phi}
The mapping $\phi$ is a bijection from $\cH$ to $\cUbal$. The number of black (resp. white) vertices in $\cH$ is mapped
to the number of black (resp. white) nodes in $\cUbal$.
\end{theorem}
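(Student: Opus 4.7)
Proof proposal. I will break the argument into four steps.

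Step~1 (Well-definedness). The first task is to verify that $\phi$ is a well-defined mapping into $\cU$. Adding the dummy black vertex $v_0$ inside the hexagonal face of $H$ and attaching it to the three white corners produces a map $Q$, which by Claim~\ref{claim:decomp_HQ} lies in $\cQ$. Theorems~\ref{th:existence} and~\ref{th:unicity} then supply the unique minimal balanced Schnyder orientation $Y$ of $\hM$, and the mapping $\sigma$ transports $Y$ to an S-quad 3-biorientation $X' = \sigma^{-1}(Y)$ of $Q$. Assuming the forthcoming Lemma~\ref{lem:inO6}, the three edges at $v_0$ in $X'$ are simply directed outward, so erasing $v_0$ and its three edges yields a genuine S-quad 3-biorientation $X$ of $H$.

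Step~2 ($\phi(H)\in\cU$). The output $\phi(H)$ is bipartite and toroidal by construction; every original vertex of $H$ keeps its three outgoing half-edges and becomes a node of degree~$3$, while each simply directed edge of $H$ loses its ingoing half-edge and becomes a pending edge capped by a new leaf of degree~$1$, so $\phi(H)$ is precubic. For unicellularity, summing ccw-degrees over faces of $H$ shows that the number $s$ of simply directed edges equals $n-2$, where $n=|V(H)|$, since each of the $n-2$ quadrangular faces contributes~$1$ and the hexagonal face contributes~$0$; then $2b+s=3n$ yields $b=n+1$ bidirected edges. Thus $\phi(H)$ has $n+s=2n-2$ vertices and $b+s=2n-1$ edges, and Euler's relation on the torus forces exactly one face.

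Step~3 (Balance). A non-contractible cycle $C$ of $\phi(H)$ consists only of plain edges, i.e.\ of bidirected edges of $H$, and the ``off-$C$'' edge at each of its vertices corresponds to the third outgoing half-edge of $X$ at that vertex; the side of $C$ it lies on is controlled by whether the associated outgoing half-edge points to the right or to the left of $C$. Hence the balance condition for $\phi(H)$ amounts to the vanishing of a $\gamma$-score of $C$ computed in $X$. I would then transfer this count to $\hM$ via the local rules of Figure~\ref{fig:transfer}: the cycle $C$ lifts to a non-contractible cycle $\widetilde C$ of $\hM$ avoiding dual vertices, and a case analysis on bidirected versus simply directed edges of $X$ shows that $\gamma(\widetilde C)$ in $Y$ equals $\gamma(C)$ in $X$, up to a symmetric contribution from edge-vertices. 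Since $Y$ is balanced we have $\gamma(\widetilde C)=0$, which forces the balance of $\phi(H)$.

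Step~4 (Inverse and statistics). Bijectivity is established by proving $\psi\circ\phi=\mathrm{id}_\cH$, as will be done in Section~\ref{sec:proof_inverse}. The key point is that each local closure $baaa\mapsto a$ precisely reconstructs a quadrangular face of $H$: the pending edge labelled $b$ is the unique simply directed ``ccw'' edge of that face in $X$, and the three $a$-sides complete the quadrangle. Greedily iterating these closures, whose outcome is independent of the order as noted after Theorem~\ref{theo:bij_psi}, rebuilds $H$ face by face. Statistic preservation is immediate, since the nodes of $\phi(H)$ are exactly the vertices of $H$ with unchanged colors. The main obstacle will be Step~3: transferring the balance condition through the local rules of $\sigma$ requires a careful case analysis, and one must check that the notion of ``side of a cycle'' is preserved between $H$ and $\hM$; Step~4 is also delicate because one must verify that the canonical biorientation produced in Step~1 matches, edge by edge, the data recorded by the $a$/$b$ labelling used by~$\psi$.
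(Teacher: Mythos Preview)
Your strategy diverges from the paper's in a significant way. The paper does not verify unicellularity or bijectivity by direct Euler counting and closure arguments; instead it invokes a ``meta-bijection'' $\Phi_+$ (a result from~\cite{FL18}, recalled in Section~\ref{sec:biorientations}) between right biorientations in $\cO_6$ and toroidal bimobiles of excess~$6$. Proposition~\ref{prop:Phi+3biori} then specializes this to a bijection $\cX\simeq\cU$, where $\cX$ is the set of S-quad 3-biorientations in $\cO_6$ on bipartite 6-quadrangular toroidal maps. Lemmas~\ref{lem:necess} and~\ref{lem:suffi} identify $\cH$ with the subfamily $\cXbal\subset\cX$ that corresponds to $\cUbal$, and the composite gives $\phi$. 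Unicellularity, connectedness, and bijectivity thus come for free from the already-established properties of $\Phi_+$.

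Your direct approach has a genuine gap in Step~4. The identity $\psi\circ\phi=\mathrm{id}_\cH$ only shows that $\phi$ is \emph{injective}; it says nothing about surjectivity onto $\cUbal$. To conclude that $\phi$ is a bijection you must also prove that every $U\in\cUbal$ arises as $\phi(H)$ for some $H\in\cH$, and nothing in your outline addresses this. (Note also that Section~\ref{sec:proof_inverse} proves the \emph{other} composition, $\phi\circ\psi=\mathrm{id}_{\cUbal}$, and does so only \emph{after} Theorem~\ref{theo:bij_phi} has been established independently; you cannot cite it for what you claim.) In the paper's argument surjectivity is automatic because $\Phi_+$ is already a bijection and balance transfers in both directions. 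In your framework you would need, for a given $U\in\cUbal$, to produce an $H\in\cH$ and show that its \emph{canonical} biorientation (the one coming from the minimal balanced Schnyder orientation) is exactly the one whose outgoing half-edges give $U$ --- this is precisely the hard content of Lemma~\ref{lem:suffi} together with Section~\ref{sec:proof_inverse}, and your sketch of ``local closures rebuild $H$ face by face'' does not supply it.

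There is also a smaller gap in Step~2: your Euler computation $V-E+F=0$ presupposes that deleting the ingoing half-edges yields a \emph{connected} graph that is \emph{cellularly embedded} on the torus; without this, the relation does not hold and you cannot deduce $F=1$. You have not argued connectedness (equivalently, that the bidirected edges span $H$). Again, the paper gets this for free from $\Phi_+$, whose image is known to consist of toroidal unicellular maps.
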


We prove this result in Section~\ref{sec:proof_bij}. 

\subsection{Specialization to triangulations}\label{sec:bij_tr}
As already mentioned, a toroidal triangulation $M$ is essentially 3-connected iff it is 
 \emph{essentially simple}, i.e., $\Minf$ is simple. Let $\cTri$
be the family of face-rooted essentially simple toroidal triangulations such that, apart from 
the root-face contour, there is no other closed walk of length $3$ enclosing a contractible 
region that contains the root-face. 
Let $\cUbaltri$ (resp. $\cHtri$) be the subfamily of $\cUbal$ (resp. $\cH$)
 where the respective numbers $i,j$ of white vertices
and black vertices satisfy $j=2i-1$. For $\cHtri$ this amounts to having all black vertices of degree $3$, 
and for $\cUbaltri$ this amounts to having no pending edge incident to a white node. 
The bijection of Theorem~\ref{theo:bij_phi} specializes into a bijection between  $\cHtri$ and $\cUbaltri$. 
For $M\in\cTri$, we let $Q$ be its angular map,  with  
$v_0$ the black vertex corresponding to the root-face of $M$, and let $H=\iota(M)$ be the map obtained from $Q$ 
by deleting $v_0$ and its 3 incident edges. Then it can be checked that $H\in\cHtri$ and that $\iota$ gives
a bijection between $\cTri$ and $\cHtri$. Thus the specialization of $\phi$ yields a bijection
between $\cTri$ and $\cUbaltri$. We actually recover here the bijection between $\cTri$ and $\cUbaltri$
recently given in~\cite{FL18}.




\section{Proof of Theorem~\ref{theo:bij_phi}}\label{sec:proof_bij}
Our approach to show Theorem~\ref{theo:bij_phi} is as follows. First, in Section~\ref{sec:biorientations} we recall from~\cite{FL18} a bijection (working for any fixed $d\geq 1$) between a certain family $\cO_d$ of bioriented toroidal maps with root-face degree $d$, and a certain family of decorated unicellular toroidal maps, which are called (toroidal) bimobiles. Then we prove in Section~\ref{sec:spec_Squad} that, for $d=6$, this bijection specializes into a bijection between the family ---denoted by $\cX$--- of 6-quadrangular bipartite toroidal maps endowed with an S-quad 3-biorientation in $\cO_6$, and a family of toroidal bimobiles that naturally identifies to the family $\cU$ (bipartite precubic unicellular toroidal maps).  

We then establish a further specialization: the subfamily $\cXbal$ of $\cX$ that is mapped to $\cUbal$ identifies to the family $\cH$. This relies on the two following properties: for a biorientation in $\cXbal$ the underlying map is in $\cH$ (Lemma~\ref{lem:necess}); and any map in $\cH$ admits a unique biorientation in $\cXbal$ (Lemma~\ref{lem:suffi} proved in Section~\ref{sec:proof_lem_suffi}, where we also show that the unique biorientation in $\cXbal$ is actually the canonical 3-biorientation considered in the description of $\phi$). 
This further specialization thus yields a bijection between $\cH$ and $\cUbal$. Moreover, as explained in Section~\ref{sec:spec_Squad} (and illustrated in Figure~\ref{fig:two_versions_phi}), the fact that the unique biorientation in $\cXbal$ is the canonical 3-biorientation easily implies that this bijection coincides with the mapping $\phi$ described in Section~\ref{sec:desc_phi}.

\subsection{Bijection between right biorientations and bimobiles}
\label{sec:biorientations}

Let $M$ be a map endowed with a biorientation such that every vertex has at least one outgoing
half-edge.  For an outgoing half-edge $h$ of $M$, we define the
\emph{rightmost walk} from $h$ as the (necessarily unique and
eventually looping) sequence $h_0,h_1,h_2,\ldots$ of half-edges starting from $h$, 
 at each step taking the opposite half-edge and then the rightmost
 outgoing half-edge at the current vertex; in other words, 
 for each $i\geq 0$, $h_{2i+1}$ is opposite to $h_{2i}$, and $h_{2i+2}$ is the next outgoing
 half-edge after $h_{2i+1}$ in counterclockwise order around their incident vertex.

If $M$ is face-rooted, a biorientation of $M$ is called a \emph{right biorientation} 
if  every vertex has at least one outgoing half-edge, and for every outgoing half-edge $h$, the rightmost walk starting
  from $h$ eventually loops on the contour of the root-face $f_0$ with $f_0$ on
  its right side. 
For $d\geq 1$,
 $\mathcal O_d$ denotes the family of right biorientations
of toroidal face-rooted maps whose root-face has degree~$d$.

We call \emph{(toroidal) bimobile}  a toroidal unicellular
map with two kinds of vertices, round or square, with no square-square edge, and such that each
corner at a square vertex might carry additional dangling half-edges
called \emph{buds}.  The \emph{excess} of a bimobile is the number of
round-square edges plus twice the number of round-round edges, minus the
number of buds.  We let $\cB_d$ be the family of bimobiles of excess $d$.

For $O\in\mathcal O_d$ (whose vertices are considered round) we denote by $\Phi_+(O)$ the embedded graph
obtained by inserting a square vertex in each face of $O$, then applying
the local rules of Figure~\ref{fig:local_rule_biorient} to every edge
of $O$ (thereby creating some edges and buds), and finally erasing the
isolated square vertex in the root-face of $O$ (since the orientation is
right, this square vertex is incident to $d$ buds and no edge), see Figure~\ref{fig:ex_Phi+} for an example. 

\begin{figure}[!h]
\center
\includegraphics[scale=0.98]{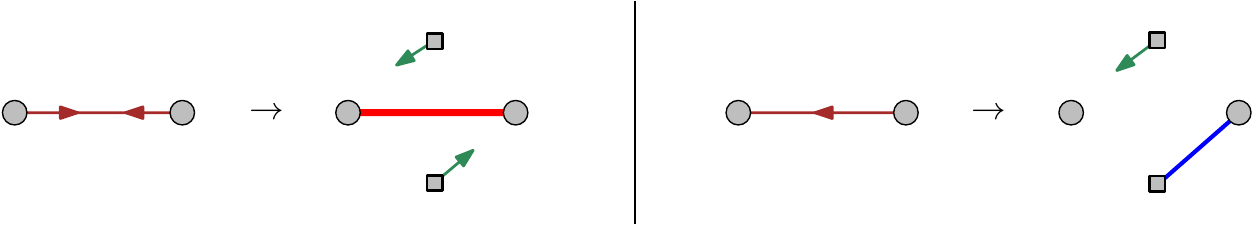}
\caption{The local rules applied to each edge by $\Phi_+$.}
\label{fig:local_rule_biorient}
\end{figure}

\begin{figure}[!h]
\center
\includegraphics[scale=0.98]{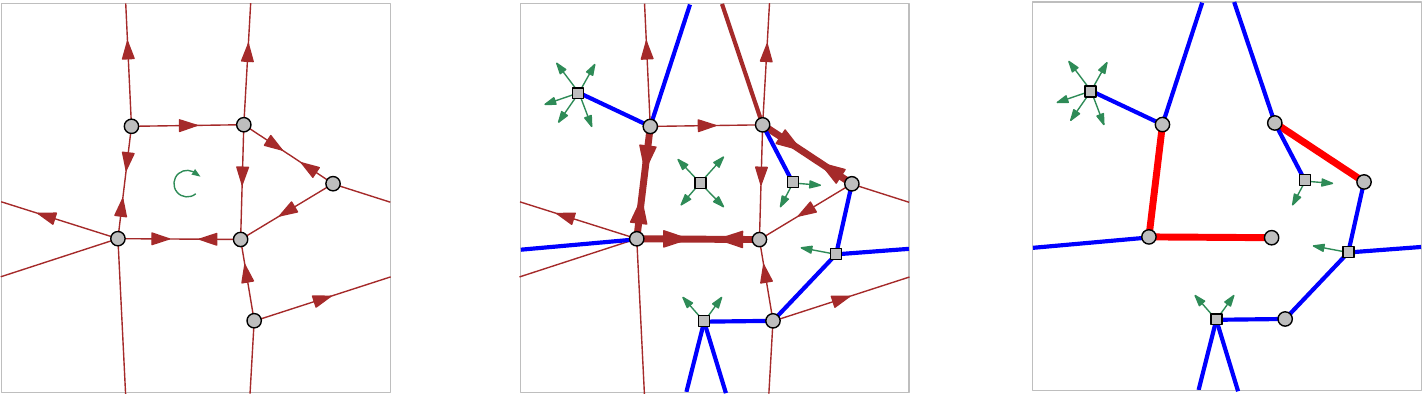}
\caption{Left: a face-rooted toroidal bioriented map $X$ in $\cO_4$. Right: the corresponding
 bimobile $\Phi_+(X)$, obtained by applying the local rules of Figure~\ref{fig:local_rule_biorient}.}
\label{fig:ex_Phi+}
\end{figure}

The following result has been obtained in~\cite{FL18}, derived from the bijection for covered maps given in~\cite{BC11}:

\begin{theorem}[{\cite[Theorem 14]{FL18}}]
For $d\geq 1$, the mapping $\Phi_+$ is a bijection
between the family $\mathcal O_d$ and the family $\cB_d$. Each vertex of outdegree $r$ in $\mathcal{O}_d$
becomes a round  vertex of degree $r$ in $\cB_d$, and each non-root face of degree $r$ and ccw-degree $s$
in $\mathcal{O}_d$ becomes a black vertex of degree $r$ with $s$ neighbours (and $r-s$ buds) in $\cB_d$. 
\end{theorem}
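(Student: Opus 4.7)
The plan is to follow the Bernardi--Chapuy meta-bijection strategy: verify that $\Phi_+$ is well-defined on $\mathcal O_d$ (producing a bimobile of excess $d$ with the stated degree correspondences), then construct an explicit inverse via a greedy bud-closure procedure around the unique face of the bimobile, and finally check that the two constructions compose to the identity.

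First I would check, by case inspection of the local rules in Figure~\ref{fig:local_rule_biorient}, that $\Phi_+(O)$ is an embedded graph with the correct vertex-degree statistics. Each outgoing half-edge of $O$ at a vertex $v$ persists as a half-edge incident to the round vertex replacing $v$, so round vertices have degree equal to the outdegree in $O$; dually, each non-root square vertex placed in a face $f$ of degree $r$ and ccw-degree $s$ acquires exactly $s$ incident edges (from simply directed edges with $f$ on their left) and $r-s$ buds (from the remaining edge-sides around $f$). The excess count $d$ then follows from a local double-counting: the $d$ buds initially attached to the square vertex inside $f_0$---which is erased at the end---account precisely for the $d$ units of excess in the output.

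The most delicate point is showing that $\Phi_+(O)$ is unicellular. Here I would exploit the defining property of a right biorientation: for every outgoing half-edge $h$ of $O$, the rightmost walk from $h$ eventually loops on the contour of the root-face $f_0$ with $f_0$ on its right. Translating this through Figure~\ref{fig:local_rule_biorient}, the face-boundary walks of $\Phi_+(O)$ can be read off from the rightmost walks of $O$, and the loop condition forces every edge-side and every bud of $\Phi_+(O)$ to be traversed exactly once in a single closed tour. Combined with an Euler-characteristic computation on the torus, this shows that $\Phi_+(O)$ has a single face, hence is a bimobile, and that it has genus $1$.

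For the inverse I would, given $B\in\cB_d$, traverse the unique face of $B$ in counterclockwise order and greedily match each bud with the next available opening that reverses a case of Figure~\ref{fig:local_rule_biorient}, finally joining the $d$ remaining openings to a newly inserted vertex that becomes the root-face of degree $d$. The excess condition is what makes the count of unmatched openings equal to $d$. The main obstacle is the global coherence of this closure on the torus: one must prove that the bud-matching never gets stuck (i.e., that the pattern enabling a match always appears in the cyclic boundary word, as in the planar case), and dually that for $\Phi_+$ no rightmost walk gets trapped in a non-root face; both amount to essentially the same topological statement, and both are where the toroidal setting requires strictly more care than the planar one treated in~\cite{BC11}.
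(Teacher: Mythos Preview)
The paper does not prove this theorem at all: it is quoted verbatim as \cite[Theorem~14]{FL18}, with the only comment being that it was ``derived from the bijection for covered maps given in~\cite{BC11}''. There is therefore no proof in the present paper to compare your proposal against.

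That said, one factual slip in your sketch is worth flagging. You write that the toroidal setting ``requires strictly more care than the planar one treated in~\cite{BC11}'', but \cite{BC11} is precisely the Bernardi--Chapuy bijection for covered maps in \emph{arbitrary} genus, and the proof strategy indicated by the paper (and carried out in~\cite{FL18}) is to specialize that general-genus framework, not to redo a planar-style closure argument by hand on the torus. So the global coherence issues you identify as the ``main obstacle'' are in fact absorbed by the covered-maps machinery rather than treated by a direct topological argument of the kind you outline. Your direct approach (verify the local degree/excess bookkeeping, prove unicellularity from the rightmost-walk condition, build the inverse by greedy bud-matching) is plausible in spirit, but the steps you label as delicate---unicellularity of $\Phi_+(O)$ and well-definedness of the inverse closure on the torus---are exactly the ones for which your sketch gives only a hand-wave, and they are the whole content of the theorem.
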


\subsection{Specialization to S-quad 3-biorientations}\label{sec:spec_Squad}
We let $\cX$ be the family of S-quad 3-biorientations where the underlying toroidal map is bipartite  
 6-quadrangular, and the biorientation is in $\cO_6$. On the other hand, we let $\cUh$ be the family of 
bimobiles of excess $6$ such that the round vertices are partitioned into black and white vertices without
 two white round vertices or two black round vertices adjacent, where all round vertices have degree $3$
and all square vertices have degree $4$ with $3$ incident buds. Note that $\cU$ and $\cUh$ are easily 
in bijection: indeed each $U\in\cU$ can be turned into a bimobile $\iota(U)\in\cUh$, upon replacing every leaf by a square vertex with $3$ buds
attached (Euler's formula and the fact that $U$ is precubic easily imply that $\iota(U)$ must have excess $6$). 

\begin{proposition}\label{prop:Phi+3biori}
The mapping $\Phi_+$ induces a bijection between $\cX$ and $\cUh$, hence a bijection between $\cX$ and $\cU$. 
The bijection from $\cX$ to $\cU$ amounts to the deletion of the ingoing half-edges.  
\end{proposition}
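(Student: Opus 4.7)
The plan is to specialize the general bijection $\Phi_+\colon \cO_d \to \cB_d$ of Theorem~14 of~\cite{FL18} to $d=6$ and verify that it restricts to a bijection $\cX \to \cUh$. The bijection $\cUh \to \cU$ is the obvious one (replace each square vertex, which has degree $4$ with three buds and one round neighbour, by a leaf attached to that neighbour), so this gives the first two claims. The last claim, that the composite bijection $\cX \to \cU$ is the deletion of ingoing half-edges, would then follow by inspecting the local rules of Figure~\ref{fig:local_rule_biorient}: each ingoing half-edge of $O$ becomes a bud in $\Phi_+(O)$ and each outgoing half-edge is preserved, so discarding the square-plus-three-buds decorations is precisely the deletion of ingoing half-edges.

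For the forward inclusion $\Phi_+(\cX) \subseteq \cUh$, take $O \in \cX$ and apply the vertex/face correspondence of Theorem~14. Every vertex of $O$ has outdegree $3$, so every round vertex of $\Phi_+(O)$ has degree $3$. Every non-root face of $O$ is quadrangular with ccw-degree $1$ (the S-quad condition), so every non-root square vertex of $\Phi_+(O)$ has degree $4$ with one round neighbour and three buds. The root-face has degree $6$ and, as recalled in the excerpt, ccw-degree $0$; consequently the erased root-face square has $6$ buds, giving $\Phi_+(O)$ the excess $6$. The bipartite $2$-coloring of the underlying map transfers to a $2$-coloring of the round vertices, and any round-round edge of $\Phi_+(O)$ arises via the local rule from an edge of $O$, hence is bichromatic.

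For the reverse inclusion $\Phi_+^{-1}(\cUh) \subseteq \cX$, take $U \in \cUh$ and set $O = \Phi_+^{-1}(U) \in \cO_6$. Reading the degree correspondence of Theorem~14 in the other direction, each vertex outdegree of $O$ is $3$; each non-root face has degree $4$ and ccw-degree $1$; and the reconstructed root-face square is isolated with six buds, so the root-face has degree $6$ and ccw-degree $0$. Hence the biorientation is S-quad and $3$-, and the underlying map is $6$-quadrangular. The remaining point, and the main obstacle, is to deduce bipartiteness of the underlying map from the $2$-coloring condition on round vertices of $U$. Contractible cycles cause no issue since all face degrees are even; the delicate point is non-contractible cycles, where one must use the fact that the $2$-coloring in the unicellular bimobile is globally consistent, together with the local rules of Figure~\ref{fig:local_rule_biorient}, to show that every edge of $O$ (including the bidirected edges, which do not appear as round-round edges in the bimobile) connects vertices of opposite colors. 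A natural way to do this is to track the $2$-coloring along the boundary walk of the unique face of $U$ and verify that the bud-to-edge matchings performed by $\Phi_+^{-1}$ respect the bipartition.
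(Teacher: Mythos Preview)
Your outline for the forward inclusion $\Phi_+(\cX)\subseteq\cUh$ and for the face/degree bookkeeping in the reverse direction is correct and matches the paper. However, there is a factual slip and a genuine gap in the bipartiteness step of the reverse inclusion.

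First, the slip: under the local rules of Figure~\ref{fig:local_rule_biorient}, it is the \emph{bidirected} edges of $O$ that become round-round edges of the bimobile, while simply directed edges become a round-square edge together with a bud. So the edges of $O$ whose bichromaticity is not immediately inherited from the round-vertex $2$-coloring of $U$ are the simply directed ones, not the bidirected ones as your parenthetical states.

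Second, and more importantly, you correctly identify that proving bipartiteness of $O$ from the round-vertex $2$-coloring of $U$ is the crux, but you stop at ``a natural way to do this is to track the $2$-coloring along the boundary walk\ldots''. This is not yet an argument, and the boundary-walk approach is not obviously easy to complete: the inverse closure matches buds to half-edges across long stretches of the boundary word, and you would need to control parity along each such matching. The paper instead proceeds as follows. Let $V$ be the submap of $U$ obtained by deleting all pending (round-square) edges; since $U$ is toroidal unicellular and precubic, $V$ is a toroidal map on the full set of round vertices, hence a connected spanning submap of $O$ consisting of the bidirected edges. Being toroidal, $V$ contains two non-homotopic non-contractible cycles; the $2$-coloring alternates along them, so they have even length. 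These two cycles also live in $O$, and since all faces of $O$ have even degree, $O$ is bipartite. Finally, because $V$ is connected and spanning, the (unique up to swap) bipartition of $O$ agrees with the $2$-coloring coming from $U$. This is the missing idea you need to complete the proof.
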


\begin{figure}[!h]
\center
\includegraphics[scale=0.98]{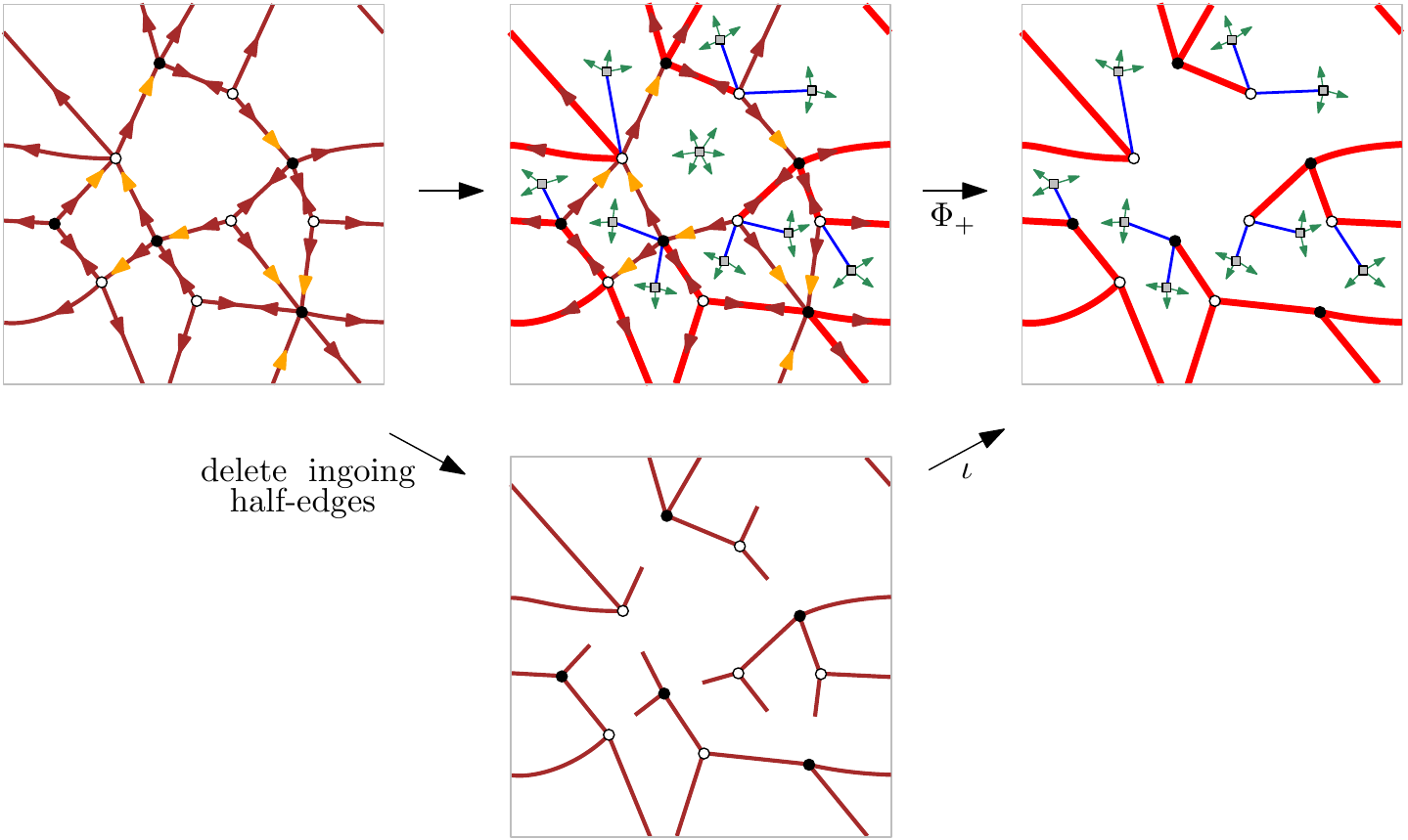}
\caption{The bijection from $\cX$ to $\cU$ amounts to deleting the ingoing half-edges.}
\label{fig:two_versions_phi}
\end{figure}

\begin{proof}
Let $\cX'$ be defined as $\cX$ except that no vertex-bipartition is required. Similarly, let
 $\cUh'$ be defined as $\cUh$ except that no bipartition of the round vertices is required. Clearly
 $\Phi_+$ induces a bijection between $\cX'$ and $\cUh'$. Moreover, if the vertices in $\cX'$ are bipartitioned
(the underlying map is bipartite) then it induces a bipartition of the round vertices in $\cUh$ so that
every round-round edge has a white extremity and a black extremity (indeed such an 
edge already exists in the bidirected map). It remains to show that if the round
vertices of $U\in\cUh'$ can be bipartitioned, then it induces a bipartition of the vertices of the associated
bidirected map $M$. The crucial point is that since $U$ is a toroidal map (with one face) and since the square vertices 
are leaves, the embedded graph $V$ obtained from $U$ by deleting all pending edges is a toroidal map. Hence 
there are two non-homotopic non-contractible cycles in $V$. Since the colors alternate along each cycle, these two cycles have even length. Note also that these two cycles are also present in the bidirected map $M$ (they are cycles of bidirected edges), hence $M$ has two non-homotopic non-contractible cycles of even length. Since the faces of $M$
have even degree, we conclude that $M$ has a valid vertex-bipartition (which is unique up to the choice of the color of a given vertex). Note also that $V$ is a connected spanning submap of $M$. Hence the vertex-bipartition of $V$ is a valid vertex-bipartition for $M$ (such that no two adjacent vertices have the same color), so that $M$ inherits bipartiteness from~$U$. 
Finally the fact that the bijection from $\cX$ to $\cU$ amounts to deleting the ingoing half-edges is 
shown in Figure~\ref{fig:two_versions_phi}.  
\end{proof}

We now give two statements ensuring that Theorem~\ref{theo:bij_phi} follows from Proposition~\ref{prop:Phi+3biori}.

\begin{lemma}\label{lem:necess}
Let $H$ be a toroidal bipartite 6-quadrangular map.
If $H$ can be endowed with a biorientation $X\in\cX$, then $H\in\cH$. 
\end{lemma}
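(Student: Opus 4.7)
The plan is to verify both properties defining $\cH$: essential irreducibility of $H$, and the absence of a contractible $6$-walk enclosing the root-face $f_0$ other than the $f_0$-contour itself.

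The first property I handle by a reduction to essential irreducibility of a related bipartite quadrangulation. Let $Q$ be the bipartite quadrangulation obtained from $H$ by inserting a black vertex $v_0$ in $f_0$ connected to its $3$ white corners. Orienting the $3$ new edges out of $v_0$ extends $X$ to an S-quad $3$-biorientation $X^*$ of $Q$: the outdegree condition at $v_0$ is satisfied automatically, and the $3$ new quadrangular faces each receive ccw-degree $1$ because the vanishing of the ccw-degree of $f_0$ in $X$ forces every old $f_0$-edge to contribute $0$ to any new quad. Via the bijection $\sigma$ of Figure~\ref{fig:transfer} between S-quad $3$-biorientations of $Q$ and Schnyder orientations of $\hM$ (where $M$ is the primal whose angular map is $Q$) with no clockwise face, we obtain a Schnyder orientation on $\hM$; by the standard characterization of essential $3$-connectivity via Schnyder orientations, $M$ is essentially $3$-connected, and Claim~\ref{claim:bijMQ} gives $Q\in\cQ$. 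Since any contractible closed walk in $H$ of length $\le 4$ is also contractible in $Q$, essential irreducibility of $Q$ forces it to bound a face of $Q$; as the walk uses only edges of $H$, it cannot bound a new quad incident to $v_0$, so it bounds an old quadrangular face of $H$.

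The second property is more subtle. Suppose for contradiction that $C$ is a contractible $6$-walk distinct from the $f_0$-contour, enclosing $f_0$ in a region $R$, and let $R' = R \setminus f_0$ be the annulus between $C$ and the $f_0$-contour. Euler on the annulus yields $V_{R'}^{\circ} = F_{R'} - 6$ and $E_{R'}^{\circ} = 2 F_{R'} - 6$ (where all faces in $R'$ are quadrangular), so in particular $F_{R'}\ge 6$. The constraint that the ccw-degree of $f_0$ equals $0$, combined with the $3$-biorientation property and the fact that $f_0$-contour vertices are strictly interior to $R$ with all their incident edges lying in $R$, pins down the local biorientation along the $f_0$-contour (forcing for instance all $f_0$-edges incident to a degree-$3$ vertex to be bidirected). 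I then trace the rightmost walks starting at outgoing half-edges in $R'$: by $X \in \cO_6$ these walks must loop on the $f_0$-contour with $f_0$ on the right, but the rigid structure forced at the $f_0$-contour is incompatible with this looping property for a nontrivial annulus $R'$, yielding the desired contradiction.

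The main obstacle is the second property, where counting alone is insufficient. The contradiction must come from the dynamical behavior of the rightmost walks through $R'$ and requires careful case analysis on the value of $F_{R'} \geq 6$ and on the possible local biorientation configurations at the $f_0$-contour and $C$-vertices.
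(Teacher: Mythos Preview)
Your proposal has two genuine gaps.

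For essential irreducibility, you invoke a ``standard characterization of essential $3$-connectivity via Schnyder orientations'', namely that the mere existence of a Schnyder orientation on $\hM$ forces $M\in\cT$. The paper only states and uses the forward direction (Theorem~\ref{th:existence}); the converse is neither proved nor cited here, and both $\sigma$ and the notion of Schnyder orientation on $\hM$ are set up in the paper under the standing hypothesis $M\in\cT$, so you are applying them outside their established scope. Even if this converse is true, as written the step is unjustified and the detour through $Q$, $M$, $\hM$ is doing no real work.

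For the enclosing-hexagon property, you explicitly concede the argument is incomplete and would need a case analysis of rightmost-walk behavior through the annulus $R'$. Your underlying premise that ``counting alone is insufficient'' is wrong: the paper's proof dispatches \emph{both} properties with a single short counting argument. For a contractible closed walk $W$ of length $2k$ whose enclosed region $R$ has $n\ge 1$ interior vertices and $e$ interior edges, Euler gives $e=2n+k-\epsilon$, with $\epsilon=2$ if $f_0\notin R$ and $\epsilon=3$ if $f_0\in R$. One then restricts the bimobile $\Phi_+(X)\in\cUh$ to the round vertices strictly inside $R$ together with the square vertices of the faces in $R$; this is a forest (a cycle would be contractible in a unicellular map) in which round vertices have degree $3$ and square vertices have degree $\le 1$, so its edge count is at least $2n+1$, while it is also at most $e$. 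Hence $k\ge\epsilon+1$: this yields $k\ge 3$ (essential irreducibility) and $k\ge 4$ when $f_0\in R$ (no non-trivial enclosing hexagon) in one stroke. No rightmost-walk dynamics, no annulus bookkeeping, and no separation of the two defining properties of $\cH$ are needed.
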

\begin{proof}
Let $W$ be a contractible closed walk of length $2k$ in $H$, such that there is at least one vertex
strictly inside (i.e., not on $W$) the enclosed region $R$. Let $n\geq 1$ and $e$ be respectively the numbers of vertices  and edges strictly inside $R$. The Euler relation ensures that $e=2n+k-\epsilon$ where $\epsilon=2$ if $R$ does not contain the root-face $f_0$, and $\epsilon=3$ if $R$ contains $f_0$. We let $U\in\cUh$ be the mobile associated to $X$,
and let $U_R$ be the part of $U$ restricted to the round vertices strictly inside $R$ and to the square vertices 
for the faces in $R$; in $U_R$ we do not retain the three buds at each square vertex. 
Note that $U_R$ is a forest (if $U_R$ had a cycle,
it would yield a contractible cycle in $U$, a contradiction) where the round vertices have degree $3$ and the square vertices have degree in $\{0,1\}$. It has $n$ nodes, 
 and from the local conditions of $\Phi_+$ one easily sees that the number of edges in $U_R$ is equal
to the number of bidirected edges with both ends strictly inside $R$ plus the number of 
simply directed edges starting from a vertex strictly inside $R$. In particular
$U_R$ has at most $e$ edges. On the other hand, the degree conditions imply that its number of edges is $2n+d$,
with $d\geq 1$ the number of connected components (trees) not reduced to a single square vertex. 
We conclude that $2n+1\leq 2n+d\leq e=2n+k-\epsilon$, hence $k\geq 3$ if $R$ does not contain
$f_0$ and $k\geq 4$ if $R$ contains $f_0$. This easily ensures that $H$ has no contractible 2-cycle (since 
there is no face of degree $2$, such a 2-cycle would have to strictly enclose at least one vertex) nor a non-facial contractible 4-cycle, nor a contractible 6-cycle enclosing $f_0$ and different from the contour of $f_0$.     
\end{proof}

\begin{lemma}\label{lem:suffi}
Every map $H\in\cH$ has a unique biorientation in $\cX$ such that the associated (by Proposition~\ref{prop:Phi+3biori}) 
unicellular map $U\in\cU$ is in $\cUbal$. This biorientation is actually the canonical biorientation of $H$.
\end{lemma}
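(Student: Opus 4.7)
The plan is to reduce the statement to Theorem~\ref{th:unicity} (uniqueness of the minimal balanced Schnyder orientation of $\hM$) by composing the bijection $\sigma$ of Section~\ref{ssec:biorientations} with an extension/restriction across $v_0$. Given $H\in\cH$, let $Q\in\cQ$ be the quadrangulation obtained by adding the dummy black vertex $v_0$ with its three edges in the hexagonal face of $H$. Any S-quad 3-biorientation $X$ of $H$ extends uniquely to an S-quad 3-biorientation $X'$ of $Q$ in which the three edges at $v_0$ are simply directed out of $v_0$ (checking that each of the three new quadrilateral faces acquires ccw-degree $1$ reduces to the fact that the hexagonal face of $H$ has ccw-degree $0$ in $X$), and conversely. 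Composing with $\sigma$, we obtain a bijection between S-quad 3-biorientations of $H$ and Schnyder orientations $Y$ of $\hM$ with no clockwise face in which $v_0$ is a source. It then suffices to show that under this bijection (i) $X\in\cX$ corresponds to $Y$ being minimal with respect to any one of the three faces of $\hM$ incident to $v_0$, and (ii) the unicellular map $U=\Phi_+(X)$ is balanced if and only if $Y$ is balanced.

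For (i), the right-biorientation condition defining $\cO_6$ forbids any rightmost walk in $X$ from looping off the hexagonal face of $H$; translating through $\sigma$ and using that $v_0$ is a source (so no such walk can pass through $v_0$), this becomes the condition that $\hM$ has no non-empty face-set $S$ avoiding the faces incident to $v_0$ such that every edge on $\partial S$ has a face of $S$ on its right---which is precisely the minimality of $Y$. For (ii), by Proposition~\ref{prop:Phi+3biori} the map $U$ is obtained from $X$ by deleting ingoing half-edges, so a non-contractible cycle $C$ of $U$ is a cycle of bidirected edges of $X$ in $H$. To $C$ one associates the natural non-contractible cycle $C^*$ in $\hM$ alternating through the primal vertices of $C$ and the edge-vertices of the bidirected edges of $C$. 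A direct case analysis using the local rules of Figures~\ref{fig:transfer} and~\ref{fig:local_rule_biorient}, performed at each primal endpoint along $C$, then shows that the number of incident edges of $U$ on a given side of $C$ matches the number of outgoing half-edges of $Y$ on the corresponding side of $C^*$. Hence $U$ is balanced along $C$ iff $\gamma(C^*)=0$; choosing two non-homotopic such cycles and invoking Theorem~\ref{theo:gamma} extends the equivalence to all non-contractible cycles.

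Combining (i) and (ii), elements of $\cXbal$ for $H$ correspond bijectively to Schnyder orientations of $\hM$ that are simultaneously minimal, balanced, and have $v_0$ as a source. Theorems~\ref{th:existence} and~\ref{th:unicity} yield existence and uniqueness of such an orientation (the source condition at $v_0$ being automatic, since $v_0$ has degree three in $Q$ and under minimality none of its incident edges can be bidirected), and by construction this orientation underlies precisely the canonical biorientation of $H$ from Section~\ref{sec:desc_phi}. Existence, uniqueness, and the identification with the canonical biorientation all follow. The main obstacle in this program is the side-count matching of step (ii): committing to a specific lifting $C^*$ in $\hM$ of the cycle $C$ in $U$, and verifying the equality of side-counts through a patient case analysis across the orientation patterns at consecutive edges of $C$---this is where the bulk of the combinatorial work lies, the rest being a tracking argument through the bijections $\sigma$ and $\Phi_+$.
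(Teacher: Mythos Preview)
Your overall strategy---reduce to the uniqueness of the minimal balanced Schnyder orientation of $\hM$ via the chain $X\leftrightarrow X'\leftrightarrow Y=\sigma(X')$---is exactly the paper's, but two of your key steps have genuine gaps.

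For step~(i) you assert that $X\in\cO_6$ is \emph{equivalent} to $Y$ being minimal, as a bare equivalence independent of balance. The forward implication is indeed what the paper establishes in its uniqueness argument (via the face-set translation of Figure~\ref{fig:nonminimal} and Lemma~16 of~\cite{FL18}). But the backward implication is not a simple ``translation through $\sigma$'': in the paper, showing that the canonical (minimal) $Y$ yields $X\in\cO_6$ first uses the \emph{balanced} hypothesis (Lemma~\ref{lem:rightmost}) to force every rightmost walk to loop on a contractible hexagon, and only then uses minimality to pin this hexagon down as $W_0$ (Lemma~\ref{lem:inO6}). Balance is precisely what rules out a rightmost walk looping on a non-contractible cycle; your sketch does not address this obstruction. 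Relatedly, your parenthetical that the three edges at $v_0$ are simply directed ``since $v_0$ has degree three and under minimality none can be bidirected'' is not immediate: degree~$3$ and outdegree~$3$ only force the half-edges \emph{at} $v_0$ to be outgoing, not the opposite half-edges to be ingoing. In the paper this is deduced from Lemma~\ref{lem:comptagecycle} once $W_0$ is known to be a rightmost walk.

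For step~(ii), your cycle $C^*$ in $\hM$ is not well-defined as described. A cycle $C$ of $U$ (hence of $Q$) alternates between white and black vertices of $Q$, which are the primal \emph{and dual} vertices of $\hM$; and an edge of $Q$ corresponds to a \emph{face} of $\hM$ (since $\hM$ is the angular map of $Q$), not to an edge-vertex. So there is no canonical ``edge-vertex of a bidirected edge of $C$'' to insert, and the lift to $\hM$ involves a choice at every step. The paper avoids this by passing to the triangulation $T(\hM)$, in which the $2$-subdivision of $Q$ embeds naturally, and comparing your side-count with the $\gamma$-score there via the auxiliary quantity $\bar\gamma$ on cycles of $Q$ (Lemma~\ref{lem:bal_pseudo_bal} together with Claim~\ref{claim:triangul}). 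That detour is where the ``bulk of the combinatorial work'' you anticipate actually takes place, and it does not reduce to a case analysis at primal vertices along a cycle of $\hM$.
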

In order to complete the proof of Theorem~\ref{theo:bij_phi} it just remains to prove Lemma~\ref{lem:suffi}, which we do in the next section.

\subsection{Proof of Lemma~\ref{lem:suffi}}\label{sec:proof_lem_suffi}

\subsubsection{Properties of 3-biorientations}
In this section we let $Q\in\cQ$, and let $M\in \cT$ be the map whose angular map is $Q$.
We let $X$ be an S-quad 3-biorientation of $Q$. For $W$ a closed walk on $Q$ that encloses
a region $R$ homeomorphic to an open disk on its right, we let $cw(W)$ (resp. $ccw(W)$)  
be the number of outgoing half-edges of $W$ that are encountered just after (resp. just before) 
a vertex while walking along $W$ around $R$, and let $o(W)$ be the
number of outgoing half-edges of $X$ that are in the interior of $R$
and incident to a vertex of $W$.

\begin{lemma}
  \label{lem:comptagecycle}
  If $W$ has length $2k$, then  $cw(W)+o(W)=3(k-1)$.
\end{lemma}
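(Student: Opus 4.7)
The plan is to combine Euler's formula on the disk-region $R$ with the local constraints of an S-quad 3-biorientation (outdegree $3$ at every vertex, ccw-degree $1$ at every quadrangular face), and then double-count outgoing half-edges in $R$ in two complementary ways. First I would classify the $2k$ edges of $W$ by their orientation relative to the walk direction: let $a$, $b$, $c$ be the number of $W$-edges that are simply directed against the walk, simply directed with the walk, or bidirected, respectively, so that $a+b+c=2k$. Inspecting the half-edge at each walk position immediately gives $cw(W)=b+c$ (and $ccw(W)=a+c$). Let $f$ be the number of interior faces of $R$ and let $|V_i|,|E_i|$ denote the numbers of strictly interior vertices and edges. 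The quadrangular-face identity $4f=2|E_i|+2k$ together with Euler's formula for the disk yield $|E_i|=2f-k$ and $|V_i|=f-k+1$.

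Next I would use the ccw-degree condition to tie $f$ to $a$. Summing ccw-degree over interior faces gives $f$, while a double count via simply directed edges whose left face lies in $R$ gives $s_i+a$, where $s_i$ is the number of strictly interior simply directed edges (only type-$a$ $W$-edges contribute to the boundary term, since the walk has $R$ on its right, so a $W$-edge has its left face inside $R$ iff it points against the walk). Thus $f=s_i+a$, which combined with $b_i+s_i=|E_i|$ determines $s_i$ and the number $b_i$ of strictly interior bidirected edges in terms of $f$, $k$, $a$.

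Finally I would count the total outdegree at vertices of $R$ in two ways: as $3|V_R|=3f+3k+3$, and as (outgoing half-edges on interior edges, $=2b_i+s_i$) $+$ (outgoing half-edges on $W$-edges, $=a+b+2c$) $+$ $o'(W)$, where $o'(W)$ denotes the outgoing half-edges at $V_W$ pointing out of $R$. After substituting and using $c=2k-a-b$, one finds $o'(W)=k+3+b$. Comparing with the total outdegree $6k$ at $V_W$ then yields $o(W)=k+a-3$, from which $cw(W)+o(W)=(b+c)+(k+a-3)=3(k-1)$ is immediate. The only delicate point is treating the case where $W$ visits some vertex or edge more than once: cutting along $W$ to form a planar sub-map with outer face of degree $2k$ automatically assigns each contribution its correct multiplicity, so the argument adapts without modification. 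Once the book-keeping is set up I do not expect any real obstacle; the one mild insight is that the asymmetric combination $cw(W)+o(W)$ collapses cleanly to $3(k-1)$, whereas the symmetric sum $cw(W)+ccw(W)+o(W)$ carries an extra exterior-boundary term $o'(W)$ that does not simplify on its own.
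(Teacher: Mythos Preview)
Your argument is correct and follows the same strategy as the paper: cut along $W$ to obtain a planar disk-map, combine Euler's relation with the quadrangular-face condition to get $|E_i|=2f-k$ and $|V_i|=f-k+1$, use the ccw-degree-$1$ condition to obtain $f=s_i+a$, and finish with the outdegree-$3$ condition. The paper's bookkeeping is leaner, though: rather than introducing $o'(W)$ and running two separate counts, it directly equates the number of outgoing half-edges on \emph{interior} edges, computed once as $3|V_i|+o(W)$ (interior vertices contribute all three of their outgoing half-edges, boundary vertices contribute exactly $o(W)$) and once as $2|E_i|-s_i$. This already yields $o(W)=k+a-3$, so your detour through $o'(W)$ is unnecessary.

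One small caution about that detour: your identities $3|V_R|=3f+3k+3$ and ``total outdegree $6k$ at $V_W$'' tacitly assume that the $2k$ boundary positions correspond to distinct vertices. When $W$ revisits a vertex $v$, the three outgoing half-edges of $v$ are shared among its copies and the exterior, so neither identity holds on its own; each acquires the same correction term, and the two errors cancel when you subtract. Your claim that ``cutting \ldots\ automatically assigns each contribution its correct multiplicity'' is therefore not quite right for $o'(W)$ individually, even though the final answer survives. The paper's direct count on interior edges sidesteps this entirely, since $|V_i|$, $|E_i|$, $s_i$ and $o(W)$ are insensitive to boundary repetitions.
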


\begin{proof}
  Consider the map $G$ obtained from $Q$ by keeping all the
  vertices and edges that lie in the region $R$, including $W$. The
  vertices that appear several times on  $W$ are repeated,
  so $G$ is consider as a planar map whose outer face boundary is $W$ turned into a simple cycle.
In $G$ we call \emph{inner} the vertices and edges not incident to the outer face, and call \emph{inner faces}
those different from the outer face.  
  Let $n,m,f$ be the numbers of inner vertices, inner edges and inner faces of $G$.  
By Euler's formula, $n-m+f=1$. All the inner faces of $G$ have degree $4$ and
  the outer face has degree $2k$, so $4f=2m+2k$. Combining the two
  equalities gives $f=n+k-1$ and $m=2n+k-2$. 

Since $X$ is a 3-biorientation, the total number of outgoing half-edges on inner edges of $G$ is $3n+o(W)$. 
On the other hand, there are $2k-cw(W)$ outer ingoing half-edges with an inner face on their left, and since $X$ is S-quad, there are exactly $f$ ingoing half-edges with an inner face on their left, so the total number of inner ingoing half-edge of $G$ is $f-2k+cw(W)$. Hence $3n+o(W)+f-2k+cw(W)=2m$, so that
$cw(W)+o(W)=2m+2k-f-3n=3k-3$.
  \end{proof}

For $C$ a non-contractible cycle of $Q$ given with a traversal direction, we denote by $o_R(C)$
(resp. $o_{L}(C)$) the total number of half-edges going out of a
vertex on $C$ on the right (resp. left) side of $C$, and let 
$o_{\uparrow}(C)$ (resp. $o_{\downarrow}(C)$) be the total number of
half-edges of $C$ that are outgoing and oriented forward
(resp. backward) along $C$.  Let
$\bar\gamma_R(C)=o_R(C)+o_{\uparrow}(C)$,
$\bar\gamma_L(C)=o_L(C)+o_{\downarrow}(C)$, and 
$\bar\gamma(C)=\bar\gamma_R(C)-\bar\gamma_L(C)$.  

Let $\hM$ be the derived map of $M$, where the primal vertices,
dual vertices, and edge-vertices are respectively white, black and gray. 
 We define the \emph{triangulation} of $\hM$ as the map, denoted $T(\hM)$, obtained
from $\hM$ by inserting a red vertex $v_f$ inside each face $f$, and connecting $v_f$ to the vertices at the $4$ corners around $f$ (see Figure~\ref{fig:triang}, note that $T(\hM)$ is $\hM$ superimposed with the angular map of $\hM$).

\begin{figure}[!h]
\center
\includegraphics[scale=0.8]{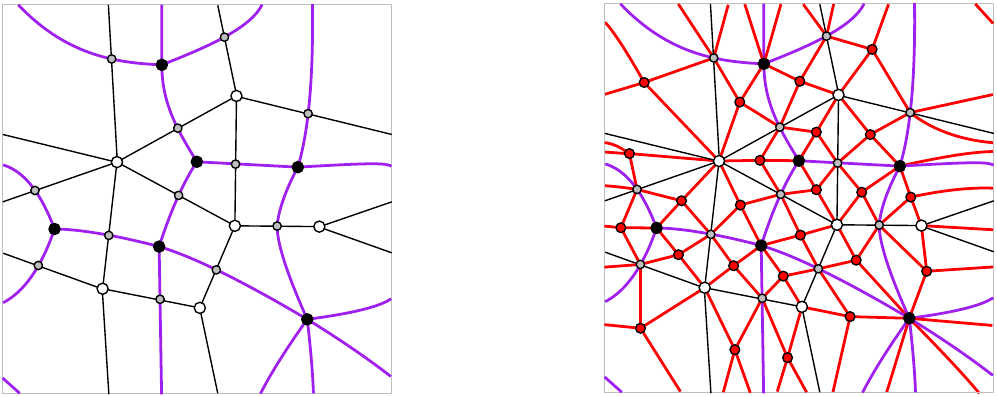}
\caption{Left: the derived map $\hM$ of Figure~\ref{fig:maps}. Right: the associated triangulation $T(\hM)$.}
\label{fig:triang}
\end{figure} 

We recall from~\cite{GL14} that for $T$ a toroidal triangulation, a \emph{3-orientation} of $T$ is 
an orientation where every vertex has outdegree $3$. 
For $M$ an essentially 3-connected toroidal map, let $Y$ be a Schnyder orientation of $\hM$.   
 Then the \emph{induced 3-orientation} of $T(\hM)$ is the orientation $Z$ of $T(\hM)$ obtained from $Y$
by orienting the edges of $T(\hM)\backslash\hM$ according to the rule shown in Figure~\ref{fig:augmented}. 
It is easy to see that it indeed gives a 3-orientation of $T(\hM)$ (red vertices have outdegree $3$,
primal and dual vertices have their outdegree that remains equal to $3$, and edge-vertices have their
outdegree that increases by $2$, from $1$ to $3$). This construction and the following claim will be useful
to prove Lemma~\ref{lem:bal_pseudo_bal} stated next.

\begin{claim}[\cite{GL14,FL18}]\label{claim:triangul}
Let $T$ be a toroidal triangulation endowed with a 3-orientation~$Z$. Then, for $Z$ to be balanced
it is enough that there are two non-homotopic non-contractible cycles that have zero $\gamma$-score. 
\end{claim}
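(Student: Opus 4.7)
The plan is to derive Claim~\ref{claim:triangul} as an immediate consequence of the second part of Theorem~\ref{theo:gamma}, once we know that at least one balanced 3-orientation of $T$ exists. Recall that Theorem~\ref{theo:gamma} states in particular that two $\alpha$-orientations of a face-rooted toroidal map are $\gamma$-equivalent as soon as they agree in $\gamma$-score on two non-homotopic non-contractible cycles. The strategy is to compare the given $Z$ with a distinguished balanced 3-orientation $Z_0$ and transport balance from $Z_0$ to $Z$ via this criterion.

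First I would invoke the existence result from~\cite{GL14}: since $T$ admits a 3-orientation (namely $Z$), it also admits a balanced 3-orientation $Z_0$, in which every non-contractible cycle has $\gamma$-score $0$. Both $Z$ and $Z_0$ are then $\alpha$-orientations of $T$ for the constant function $\alpha\equiv 3$, so the framework of Theorem~\ref{theo:gamma} applies to the pair $(Z,Z_0)$.

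Next, let $C_1$ and $C_2$ be the two non-homotopic non-contractible cycles on which $Z$ is assumed to have $\gamma$-score $0$. Since $Z_0$ is balanced, it also has $\gamma$-score $0$ on $C_1$ and $C_2$, so $Z$ and $Z_0$ agree in $\gamma$-score on these two cycles. By the second part of Theorem~\ref{theo:gamma}, this suffices to conclude that $Z$ and $Z_0$ are $\gamma$-equivalent, i.e., they share the same $\gamma$-score on every non-contractible cycle of $T$. As $Z_0$ is balanced, so is $Z$, which proves the claim.

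The argument is essentially bookkeeping once the right statements are in place, so I do not anticipate a real obstacle; the only mild subtlety is checking that the existence theorem of~\cite{GL14} indeed applies under the sole hypothesis that $T$ carries some 3-orientation (which is guaranteed by the presence of $Z$), and that the two non-homotopic cycles $C_1,C_2$ generate enough homological information to invoke the second part of Theorem~\ref{theo:gamma} verbatim.
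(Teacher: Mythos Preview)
The paper does not give its own proof of this claim; it is quoted from~\cite{GL14,FL18}. Your argument is logically sound provided you may really invoke, as a black box, the existence of a balanced 3-orientation $Z_0$ on an \emph{arbitrary} toroidal triangulation that happens to carry some 3-orientation. That existence statement is however the substantial content of~\cite{GL14}, and it is not clear it is proved there under this sole hypothesis (rather than, say, for essentially simple triangulations); you should check this carefully, since otherwise there is a genuine gap---and depending on how~\cite{GL14} is organized, using existence here could even be circular.

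The argument in the cited references is different and more self-contained: one shows directly, via an Euler-type count exploiting that every vertex has outdegree $3$ and every face is a triangle, that the $\gamma$-score of a non-contractible cycle depends only on its homology class and is additive under concatenation. Thus $\gamma$ induces a group homomorphism $H_1(\mathbb{T})\cong\mathbb{Z}^2\to\mathbb{Z}$, and vanishing on two non-homotopic non-contractible cycles (a homology basis) forces vanishing on every non-contractible cycle. This direct route avoids importing the heavy existence theorem that your approach needs; your use of Theorem~\ref{theo:gamma} is essentially borrowing the \emph{relative} version of this same homological fact and then upgrading it to the \emph{absolute} statement via the reference orientation $Z_0$.
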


\begin{figure}[!h]
\center
\includegraphics[scale=0.4]{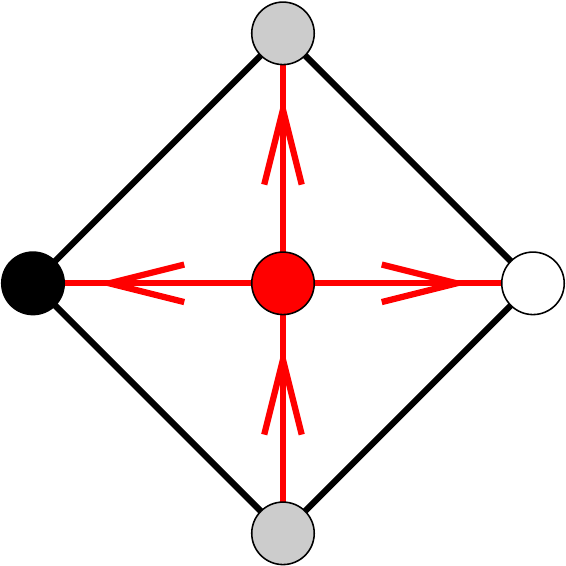}
\caption{The rule to orient the edges of $T(\hM)\backslash\hM$ to obtain the induced 3-orientation.}
\label{fig:augmented}
\end{figure}

\begin{lemma}
  \label{lem:bal_pseudo_bal}
  If $X$ is balanced, then $\bar\gamma(C)=0$ for any non-contractible cycle $C$ of~$Q$. 
Moreover, for $X$ to be balanced it is enough that $\bar\gamma(C_1)=\bar\gamma(C_2)=0$ for a pair $\{C_1,C_2\}$ 
of non-homotopic non-contractible cycles of $Q$. 
\end{lemma}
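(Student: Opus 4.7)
The plan is to transfer the statement from the S-quad 3-biorientation $X$ on $Q$ to the induced 3-orientation $Z$ on the triangulation $T(\hM)$, where Claim~\ref{claim:triangul} can be applied. The bridge is a cycle correspondence: to each non-contractible cycle $C=v_0\,e_1\,v_1\,e_2\cdots e_{2k}\,v_{2k}=v_0$ of $Q$ I associate a homotopic cycle $\tilde{C}$ of $T(\hM)$ obtained by replacing, for each $i$, the edge $e_i=\{v_{i-1},v_i\}$ (which corresponds to a face $f_i$ of $\hM$, hence to a red vertex $v_{f_i}$ of $T(\hM)$) by the length-$2$ path $v_{i-1}\to v_{f_i}\to v_i$. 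The correspondence $C\mapsto\tilde{C}$ preserves homotopy classes and, in particular, sends non-homotopic pairs to non-homotopic pairs.

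The core of the argument is a local identity
\[
\bar\gamma_X(C)\;=\;\gamma_Z(\tilde{C})\qquad\text{for every non-contractible cycle $C$ of $Q$,}
\]
which I would establish by matching, vertex-by-vertex and edge-by-edge along $C$, the signed contributions on both sides. At each primal/dual vertex $v_i$ of $C$ the outdegree is $3$ in $X$ and remains $3$ in $Z$, so the contribution from $v_i$ to $\gamma_Z(\tilde{C})$ equals the contribution from the three outgoing half-edges at $v_i$ in $X$ to $\bar\gamma_X(C)$, provided they are shown to sit on matching sides of $\tilde{C}$. For each edge $e_i$ of $C$ the bioriented state in $X$ (simply directed forward, simply directed backward, or bidirected) is translated by the local rule of Figure~\ref{fig:transfer} into an orientation of the four $\hM$-edges bounding $f_i$ in $Y=\sigma(X)$, which in turn produces, via Figure~\ref{fig:augmented}, the three outgoing half-edges at $v_{f_i}$ in $Z$. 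A finite case analysis then verifies that the combined contribution of $e_i$ and $v_{f_i}$ to $\gamma_Z(\tilde{C})$ equals the contribution of $e_i$ to $\bar\gamma_X(C)$.

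With this identity in hand, both directions of the lemma follow. Assume first that $X$ is balanced, i.e.\ $Y=\sigma(X)$ is balanced on $\hM$. Two non-homotopic non-contractible cycles $D_1,D_2$ of $\hM$ avoiding dual vertices (e.g.\ lifts of non-contractible cycles of $M$) have $\gamma_Y(D_i)=0$; a short check based on Figure~\ref{fig:augmented} shows that at each edge-vertex of such a cycle the two extra outgoing half-edges introduced when passing from $Y$ to $Z$ sit symmetrically on its two sides, so $\gamma_Z(D_i)=\gamma_Y(D_i)=0$. Claim~\ref{claim:triangul} then yields $Z$ balanced on $T(\hM)$, hence $\gamma_Z(\tilde{C})=0$ for every non-contractible $\tilde{C}$, and so $\bar\gamma_X(C)=0$ for every non-contractible $C$ of $Q$. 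Conversely, if $\bar\gamma_X(C_1)=\bar\gamma_X(C_2)=0$ for two non-homotopic non-contractible cycles of $Q$, then $\gamma_Z(\tilde{C}_1)=\gamma_Z(\tilde{C}_2)=0$ with $\tilde{C}_1,\tilde{C}_2$ non-homotopic in $T(\hM)$; Claim~\ref{claim:triangul} gives $Z$ balanced, and the same symmetry check (used in reverse) on cycles of $\hM$ avoiding dual vertices converts this back to $Y$ balanced, i.e.\ $X$ balanced. The main obstacle is the local identity $\bar\gamma_X(C)=\gamma_Z(\tilde{C})$: once the case analysis unwinding the two successive local rules of Figures~\ref{fig:transfer} and~\ref{fig:augmented} is carried out cleanly, the rest reduces to a direct appeal to Claim~\ref{claim:triangul}.
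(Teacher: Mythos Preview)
Your approach is essentially the same as the paper's. The cycle $\tilde{C}$ you build is exactly the cycle the paper denotes $C^{(2)}$ (it observes that the $2$-subdivision of $Q$ is a submap of $T(\hM)$ via the red vertices), and the key identity $\bar\gamma_X(C)=\gamma_Z(\tilde{C})$ together with $\gamma_Y(D)=\gamma_Z(D)$ for cycles of $\hM$ avoiding dual vertices, followed by an appeal to Claim~\ref{claim:triangul}, is precisely the paper's argument. The only cosmetic difference is that where you invoke a finite case analysis for the identity, the paper argues more globally: the $2k$ red vertices on $C^{(2)}$ contribute a total of $k$ to each of $\gamma_L^Z$ and $\gamma_R^Z$ (hence cancel in the difference), while the primal/dual vertices on $C^{(2)}$ contribute exactly $\bar\gamma_L^X(C)$ and $\bar\gamma_R^X(C)$ via the rules of Figure~\ref{fig:transfer}.
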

\begin{proof}
Let $C$ be a non-contractible cycle of $Q$ given with a traversal direction, and let $Z$ be the 3-orientation of $T(\hM)$ induced by $Y=\sigma(X)$. Note that the map obtained by 2-subdividing every edge of $Q$ is a submap of $T(\hM)$, hence every cycle $C$ of $Q$ identifies to a cycle of $T(\hM)$, which we denote by $\C2$. Let $2k$ be the length of $C$. The local rule (Figure~\ref{fig:augmented}) to obtain $Z$ out of $Y$ ensures that there is a total contribution of $k$ from the red vertices to
 $\gamma_L^Z(\C2)$ (resp. to $\gamma_R^Z(\C2)$). On the other hand, the local rules (Figure~\ref{fig:transfer}) to obtain $Y$ from $X$
ensure that  there is a contribution of $\bar\gamma_L^X(C)$ (resp. $\bar\gamma_R^X(C)$) from the black and white vertices
to $\gamma_L^Z(\C2)$ (resp. $\gamma_R^Z(\C2)$). Hence we have 
\[
\bar\gamma^X(C)=\gamma^Z(\C2).
\]
On the other hand, if $C$ is a non-contractible cycle of $\hM$, then  $C$  is also a non-contractible cycle of $T(\hM)$ (indeed $\hM$ is a submap of $T(\hM)$). Let $C$ be a non-contractible cycle of $\hM$ not passing by dual vertices. 
Let $2k$ be the length of $C$ (which alternates between primal vertices and edge-vertices). 
Then the local rule to obtain $Z$ from $Y$ 
ensures that $\gamma_L^Z(C)=\gamma_L^Y(C)+k$ and  $\gamma_R^Z(C)=\gamma_R^Y(C)+k$, hence
\[
\gamma^Z(C)=\gamma^Y(C).
\]
We can now easily conclude the proof. By definition $X$ is balanced iff $Y$ is balanced, iff $\gamma^Y(C)=0$
for every non-contractible cycle $C$ of $\hM$ not passing by dual vertices. We can take a pair $\{C_1,C_2\}$ 
of such cycles that are non-homotopic.   
We have $\gamma^Z(C_1)=\gamma^Z(C_2)=0$, which ensures that $Z$ is balanced according to Claim~\ref{claim:triangul}. 
Hence, for any non-contractible cycle $C$ of $Q$ we have $\bar\gamma^X(C)=\gamma^Z(\C2)=0$. 

Conversely, if there is a pair $\{C_1,C_2\}$ 
of non-homotopic non-contractible cycles of $Q$ such that $\bar\gamma^X(C_1)=\bar\gamma^X(C_2)=0$, 
then $\gamma^Z(C_1^{(2)})=\gamma^Z(C_2^{(2)})=0$, hence $Z$ is balanced. Hence for any non-contractible cycle $C$ 
of $\hM$ not passing by dual vertices, we have $\gamma^Y(C)=\gamma^Z(C)=0$. Hence
$Y$ is balanced, and so is $X$. 
\end{proof}

\begin{lemma}
  \label{lem:rightmost}
 Assume $X$ is balanced. Then any rightmost walk of $X$ eventually loops on a closed walk $W$ of length $6$
with a contractible region on its right.
\end{lemma}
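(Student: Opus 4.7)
The plan is to combine the structure of the rightmost walk with Lemma~\ref{lem:comptagecycle} and Lemma~\ref{lem:bal_pseudo_bal}, arguing in three steps. Since $Q$ has finitely many half-edges and the rightmost walk is deterministic (each half-edge has a unique rightmost successor), the walk must eventually revisit a half-edge and from that point on trace a closed walk $W$ cyclically; write its length as $2k$ (even since $Q$ is bipartite). The defining property of the rightmost rule immediately yields the following bookkeeping: at each corner of $W$, the leaving half-edge is the first outgoing half-edge encountered in ccw order after the arriving half-edge, so the sector opening strictly to the right of $W$ at that corner contains no outgoing half-edge. Hence every leaving half-edge along $W$ is outgoing, giving $cw(W)=2k$, and no outgoing half-edge at a vertex of $W$ points strictly into the region $R$ on its right, giving $o(W)=0$.

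The heart of the argument is to prove that $R$ is homeomorphic to an open disk, so that Lemma~\ref{lem:comptagecycle} applies. I would argue by contradiction: if $R$ were not a disk, then $W$ (or a portion of it) is homotopically nontrivial on the torus, and one can extract from $W$ a non-contractible \emph{simple} cycle $C$ of length $2k_C$ (taking $C=W$ if $W$ is already a simple cycle, and otherwise an innermost simple non-contractible sub-cycle of $W$, chosen so that at each of its vertices its right sector sits inside a single right sector of $W$). The previous bookkeeping then forces $o_R(C)=0$, and a straightforward outdegree count at the $2k_C$ vertices of $C$, using that every vertex of $Q$ has outdegree exactly $3$, yields $o_\uparrow(C)=2k_C$, $o_L(C)=4k_C-b_C$, $o_\downarrow(C)=b_C$, where $b_C$ is the number of bidirected edges on $C$. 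Substituting:
\[
\bar\gamma(C)=\bigl(o_R(C)+o_\uparrow(C)\bigr)-\bigl(o_L(C)+o_\downarrow(C)\bigr)=2k_C-4k_C=-2k_C<0.
\]
This contradicts Lemma~\ref{lem:bal_pseudo_bal}, which forces $\bar\gamma(C)=0$ for any non-contractible cycle since $X$ is balanced. Hence $R$ is an open disk.

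With $R$ known to be an open disk, Lemma~\ref{lem:comptagecycle} gives $cw(W)+o(W)=3(k-1)$, and plugging in the two bookkeeping facts $cw(W)=2k$ and $o(W)=0$ yields $2k=3(k-1)$, so $k=3$ and $|W|=6$, as claimed.

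The main obstacle I anticipate lies in the second step, namely the extraction of the simple non-contractible sub-cycle $C$ when $W$ is not itself a simple cycle. Care is required so that at each vertex of $C$ the right sector of $C$ is contained in a single right sector of $W$ (and is therefore empty of outgoing half-edges), rather than spanning several $W$-corners plus intervening between-visit sectors that might a priori carry outgoing half-edges. Peeling off an innermost non-contractible sub-loop of $W$ (equivalently, lifting to the universal cover and selecting a minimal translation segment) is how I would handle this bookkeeping, keeping $o_R(C)=0$ and allowing the outdegree-3 count to conclude.
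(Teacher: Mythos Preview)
Your overall strategy matches the paper's: the bookkeeping $cw(W)=2k$, $o(W)=0$ and the final application of Lemma~\ref{lem:comptagecycle} to get $k=3$ are exactly how the paper concludes, and your computation $\bar\gamma(C)=-2k_C\neq 0$ to rule out a non-contractible simple cycle is precisely the paper's argument in the case where $W$ itself is a simple non-contractible cycle.

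There is, however, a genuine gap in your second step. The implication ``$R$ is not a disk $\Rightarrow$ $W$ (or a portion of it) is homotopically nontrivial'' is false on the torus. If $W$ is a \emph{contractible} simple cycle, one complementary region is a disk and the other is a once-punctured torus; should the disk lie on the \emph{left} of $W$, then $R$ is not a disk, yet $W$ is null-homotopic and contains no non-contractible sub-cycle whatsoever. Your argument says nothing about this case. The paper rules it out (more generally, it rules out any subwalk of $W$ enclosing a disk on its left) by applying Lemma~\ref{lem:comptagecycle} to the reversed walk $W'$: since $W$ is rightmost, all three outgoing half-edges at each interior vertex of the subwalk lie in the closed left region, and comparing this to the count $cw(W')+ccw(W')+o(W')=5k'-3$ yields a contradiction.

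The second difficulty is the one you already flag: when $W$ is not simple, extracting a simple non-contractible $C$ with every right sector of $C$ contained in a single right sector of $W$ (so that $o_R(C)=0$ and $o_{\uparrow}(C)=2k_C$ hold) is delicate, because at a repeated vertex the right sector of $C$ may span several $W$-corners together with intervening sectors that were on the \emph{left} of $W$. The paper sidesteps this extraction entirely: having first excluded any disk on the left, it argues topologically (its Figure~\ref{fig:righttangent}) on how a rightmost walk can self-intersect tangentially on the left, and concludes directly that $W$ bounds a contractible region on its right. Your ``innermost sub-loop / universal-cover'' sketch may be salvageable, but as written it is not a proof, and in any event it cannot replace the missing ``disk on the left'' argument above.
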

\begin{proof}
We may assume that all the half-edges
  of $W$ are distinct, i.e., there is no strict subwalk of $W$ that is
  also a looping part of a rightmost walk.  Note that $W$ cannot cross
  itself otherwise it is not a rightmost walk. However, $W$ may have
  repeated vertices but in that case $W$ intersects itself
  tangentially on the left side. Let $2k$ be the length of $W$.

  Suppose by contradiction that there is an oriented subwalk $\bar{W}$ of
  $W$ (possibly $\bar{W}=W$) that encloses a region $R$ on its left side
  that is homeomorphic to an open disk. We let $W'$ be $\bar{W}$ taken in reverse direction, so that
it encloses $R$ on its right side.   Let $v$ be the vertex
  incident to the first half-edge of $W'$ (from an arbitrary starting point on the oriented walk $W'$). Let $2k'$ be the length of
  $W'$.  By Lemma~\ref{lem:comptagecycle}, we have
  $cw(W')+o(W')=3(k'-1)$. Note that $ccw(W')=2k'$. So the number of
  outgoing half-edges of the $2k'$ vertices of $W'$ in $R$ (including
  its border) is $5k'-3$. Since $W$ is a rightmost walk, all the $2k'-1$
  vertices of $W'$ distinct from $v$ have their $3$ incident outgoing
  half-edges in $R$. Note that $v$ has at least one outgoing half-edge
  in $R$ (the first half-edge of $W'$). So the number of outgoing
  half-edges incident to vertices of $W'$ that lie in $R$ is at least
  $3\times (2k'-1)+1$.  Thus $5k'-3\geq 3\times (2k'-1)+1$, a
  contradiction.

Let $k$ be the half-length of $W$. We consider two cases whether $W$ is a cycle
  (no repetition of vertices) or not.

\begin{figure}[!h]
 \center
 \begin{tabular}{cc}
 \includegraphics[scale=0.3]{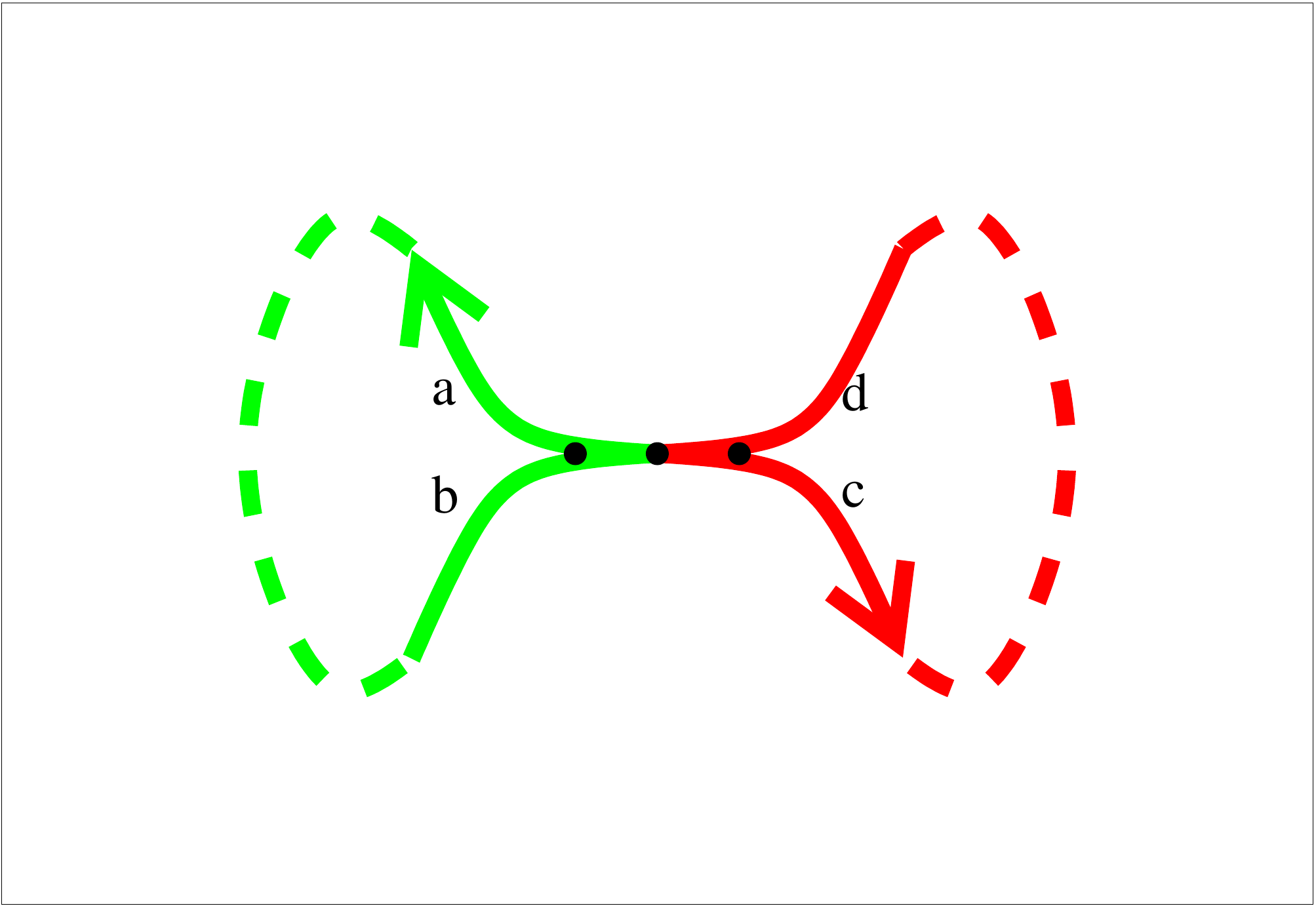} \ \ & \ \ 
 \includegraphics[scale=0.3]{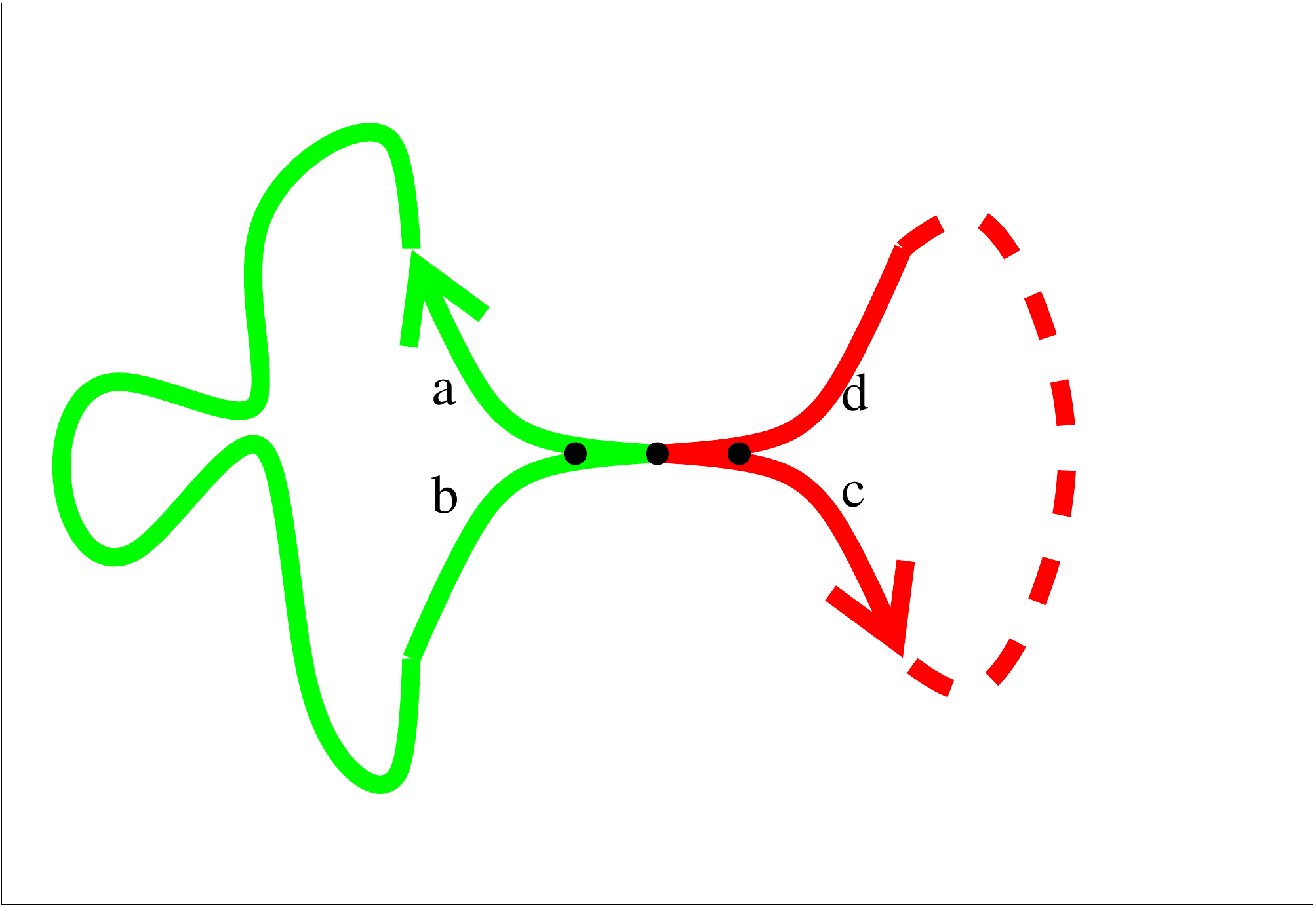}\\
(i) \ \ & \ \ (ii)\\
& \\
 \includegraphics[scale=0.3]{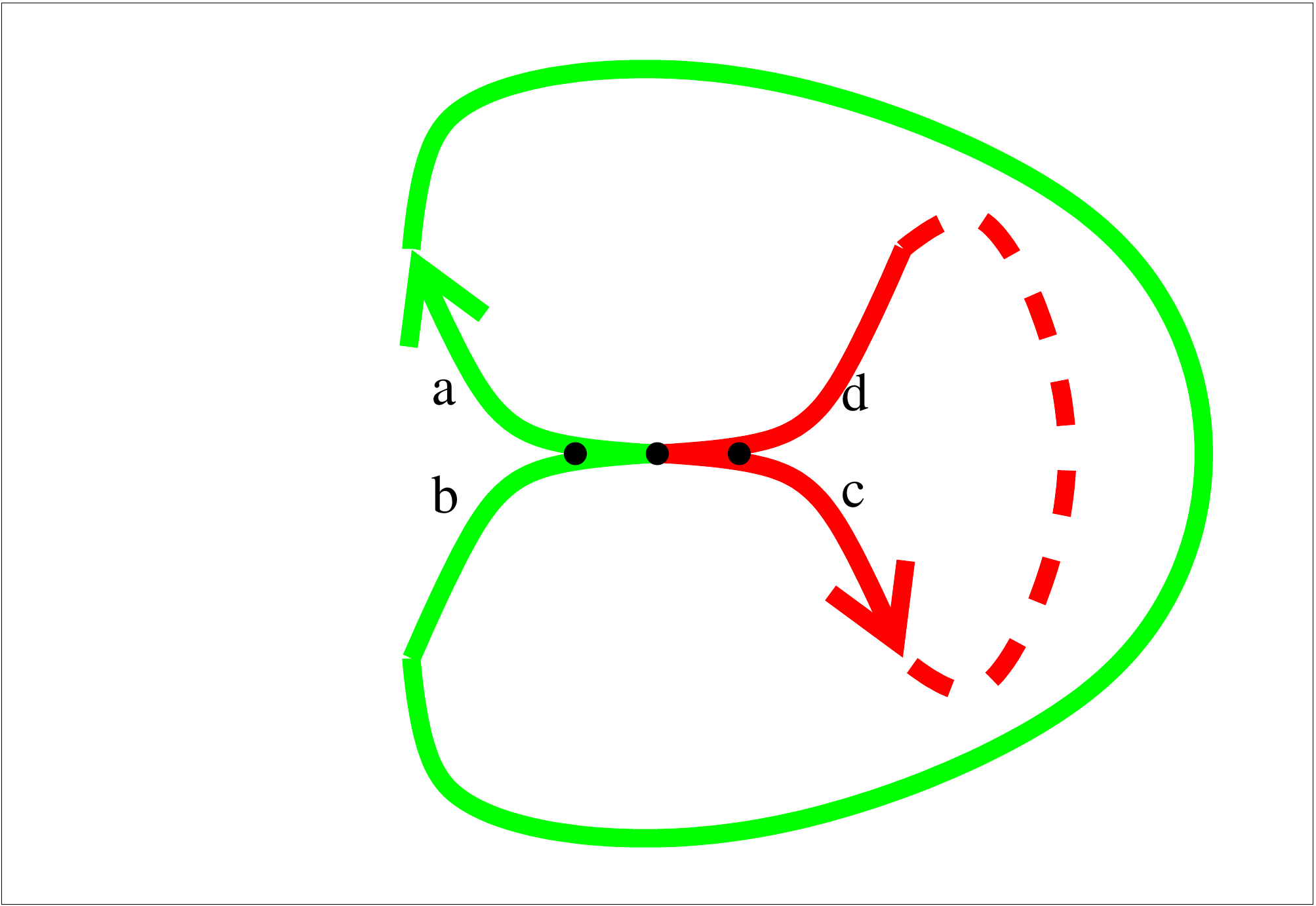}\ \ & \ \ 
 \includegraphics[scale=0.3]{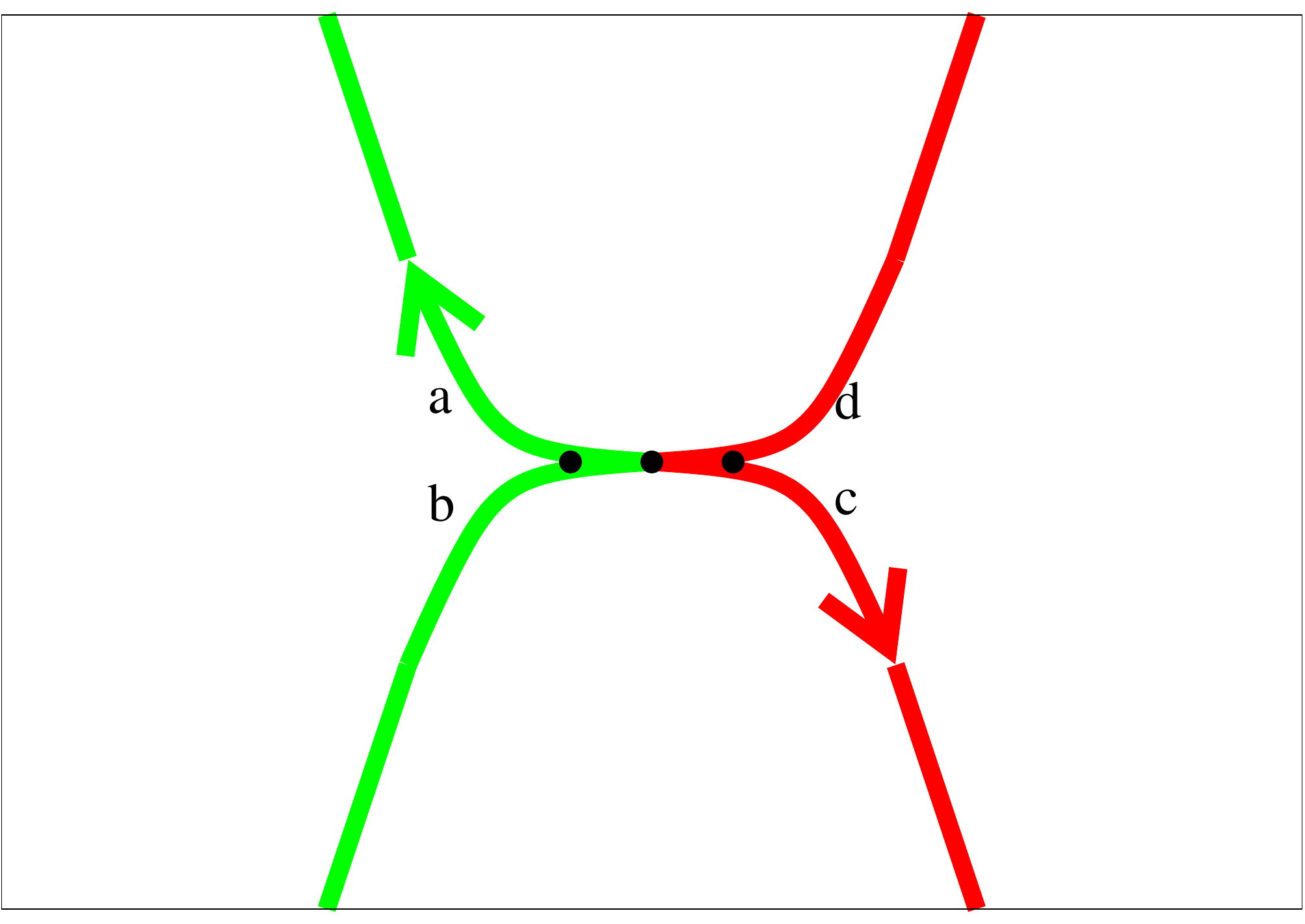}\\
(iii) \ \ & \ \ (iv)\\
 \end{tabular}
\caption{Case analysis for the proof.}
 \label{fig:righttangent}
 \end{figure}

\begin{itemize}
\item \emph{$W$ is a cycle} 

  Suppose by contradiction that $W$ is a non-contractible cycle $C$.
  Since $W$ is a rightmost walk, all the half-edges incident to the
  right side of $C$ are ingoing, so $w_R(C)=0$. We have
  $o_{\uparrow}(C)=2k$ since $W$ is taking an outgoing half-edge 
  after each encountered vertex. Hence $\bar\gamma_R(C)=2k$. Since we are considering a
  $3$-biorientation of $M$, the total number of outgoing half-edges of
  $X$ incident to vertices of $W$ is $3\times 2k$, hence $\bar\gamma_L(C)+\bar\gamma_R(C)=6k$, 
so $\bar\gamma_L(C)=4k$. We reach the contradiction that $\bar\gamma(C)=\bar\gamma_R(C)-\bar\gamma_L(C)=-2k\neq 0$. 
  Thus $W$ is a contractible cycle.

  By previous arguments, the contractible cycle $W$ does not enclose a
  region homeomorphic to an open disk on its left side. So $W$
  encloses a region homeomorphic to an open disk on its right side, as
  claimed.

\item \emph{$W$ is not a cycle} 

  As we have already seen, $W$ cannot cross itself nor intersect itself tangentially on
  the right side, hence it has to intersect tangentially on the left side.
  Such an intersection can be on a single vertex or on a path, as
  illustrated in Figure~\ref{fig:righttangent}(i). The half-edges of $W$
  incident to this intersection are noted as in Figure~\ref{fig:righttangent}(i)--(iv),
  where $W$ is going periodically through (in this order) $a,b,c,d$.  By
  what precedes, the subwalk of $W$ from $a$ to $b$ (shown in green) does
  not enclose regions homeomorphic to  open disks on its left
  side. Hence we are not in the case of 
  Figure~\ref{fig:righttangent}(ii). Moreover, if this (green) subwalk
  encloses a contractible region on its right side,
  then this region contains the (red) subwalk of $W$ from $c$ to $d$,
  see Figure~\ref{fig:righttangent}(iii). Since $W$ cannot cross
  itself, this (red) subwalk necessarily encloses contractible regions
   on its left side, a contradiction. So
  the (green) subwalk of $W$ starting from $a$ must form a
  non-contractible curve before reaching $b$. Similarly, the (red)
  subwalk starting from $c$  must form a
  non-contractible curve before reaching $d$. Since $W$ is a rightmost
  walk  and cannot cross itself, we are in the situation of
  Figure~\ref{fig:righttangent}(iv) (possibly with more tangent
  intersections on the left side). Hence, $W$
  encloses a contractible region on its right side.
\end{itemize}
We conclude that in both cases $W$ encloses a contractible region on its right side. 
Since $W$ is a rightmost walk we have $cw(W)=2k$ and $o(W)=0$. Hence by Lemma~\ref{lem:comptagecycle} we have
$2k=3k-3$, so that $k=3$.  
\end{proof}

\subsubsection{Proof of existence in Lemma~\ref{lem:suffi}}\label{sec:suffi} 
Let $H\in\cH$ and let $Q\in\cQ$ be obtained from $H$ by adding a black vertex $v_0$ inside the 
root-face $f_0$ of $H$, and adding 3 edges $e_1,e_2,e_3$ connecting $v_0$ to the corners  $c_1,c_2,c_3$
at white vertices around $f_0$. Let $M$ be the map whose angular map is $Q$, $\hM$ the derived map of $M$, 
 and let $f_1,f_2,f_3$ be the 
$3$ faces of $\hM$ corresponding to $e_1,e_2,e_3$. 
Let $Y$ be the minimal balanced Schnyder orientation of $\hM$ (minimal with respect to any of the 
$3$ faces $f_1,f_2,f_3$), let $X'=\sigma^{-1}(Y)$, and let $X$ be the induced biorientation of $H$ (the canonical biorientation of $H$).    
Let $W_0$ be the closed walk of length $6$ in $Q$ that is inherited from the contour of $f_0$,
and let $R_0$ be the contractible region of $Q$ enclosed by $W_0$ (i.e., $R_0$ is 
the $3$-face region that takes the place of $f_0$). 

\begin{lemma}\label{lem:inO6}
In $Q$, any rightmost walk for $X'$ eventually loops on $W_0$. This implies that $X$ is an S-quad
 3-biorientation in $\cO_6$. 
\end{lemma}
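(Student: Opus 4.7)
The statement has two conjuncts, which I would establish in order: first (i) every rightmost walk of $X'$ in $Q$ eventually loops on $W_0$, and then (ii) the restriction $X$ of $X'$ to $H$ is an S-quad $3$-biorientation belonging to $\cO_6$. Part (i) is the core of the argument; part (ii) will follow quickly.

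For (i), I would first note that $X'$ is balanced: $\sigma(X')=Y$ is balanced by construction, so $X'$ is balanced by the definition of balanced S-quad $3$-biorientations. Then Lemma~\ref{lem:rightmost} yields that any rightmost walk of $X'$ eventually loops on a closed walk $W$ of length $6$ enclosing a contractible region $R$ on its right, with $o(W)=0$. The remaining task is to show $W=W_0$, which I would attack by contradiction. The plan is to build from $R$ a non-empty set $S$ of faces of $\hM$ witnessing the non-minimality of $Y$ with respect to each of $f_1,f_2,f_3$, which contradicts Theorem~\ref{th:unicity} together with the remark in Section~\ref{sec:desc_phi} that the minimal balanced Schnyder orientation of $\hM$ does not depend on which of the three faces around $v_0$ is chosen as root. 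A natural candidate for $S$ is the set of faces of $\hM$ corresponding (under the bijection faces-of-$\hM$ $\leftrightarrow$ edges-of-$Q$) to the edges of $Q$ strictly inside $R$. Using the local rules of Figure~\ref{fig:transfer}, the condition $o(W)=0$ should translate into the statement that every boundary edge of $S$ in $\hM$ has a face of $S$ on its right in $Y$. Moreover $S$ is non-empty: if it were empty, $R$ would be a contractible region bounded by a length-$6$ walk with no interior edge, which together with the essential irreducibility of $Q\in\cQ$ (by a Euler-formula check in the spirit of Lemma~\ref{lem:necess}) and the properties of $\cH$ would already force $W=W_0$. The minimality of $Y$ then forces $\{f_1,f_2,f_3\}\subseteq S$, from which $v_0$ and its three incident edges $e_1,e_2,e_3$ lie strictly inside $R$. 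Reading $W$ as a closed walk in $H$ (after contracting $v_0$ and the $e_i$ back into $f_0$), $W$ encloses a contractible region of $H$ containing $f_0$ and is distinct from the hexagonal root-face contour, contradicting $H\in\cH$.

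Once $W=W_0$ is established, part (ii) follows quickly. The condition $o(W_0)=0$ forces each half-edge of $e_i$ at $c_i$ to be ingoing, hence (since every edge has at least one outgoing half-edge) each half-edge at $v_0$ is outgoing, making each $e_i$ simply directed from $v_0$ to $c_i$. Deleting $v_0$ and these three edges removes only three ingoing half-edges (one at each $c_i$), so every vertex of $H$ retains outdegree $3$. The quadrangular faces of $H$ unaffected by this deletion keep ccw-degree $1$, while the three quadrangles around $v_0$ in $Q$ merge into the hexagonal root face of $H$, whose ccw-degree must be $0$ by the Euler-relation argument noted earlier for S-quad $3$-biorientations of $6$-quadrangular maps. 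Finally, any outgoing half-edge at a vertex of $H$ is outgoing in $X'$; the rightmost walk in $X'$ starting from it cannot visit $v_0$ (the only accesses, via the $e_i$, are blocked by the ingoing half-edges at the $c_i$), so the walk stays in $H$ and loops on $W_0$, which is the contour of the hexagonal root face of $H$; hence $X\in\cO_6$.

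The main obstacle is step (i), specifically the translation of the combinatorial condition $o(W)=0$ on $W$ in $Q$ into the counterclockwise boundary condition on $S$ in $Y$. This requires a careful case analysis applying the local rules of Figure~\ref{fig:transfer} at each vertex of $W$, classifying the edges of $Q$ at that vertex according to whether they lie on $W$, strictly inside $R$ or strictly outside $R$, and tracking the resulting orientation of every edge of $\hM$ on the boundary of $S$. The bookkeeping is delicate but should be mechanical once the convention defining $\sigma$ is applied consistently.
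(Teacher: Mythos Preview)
Your plan is sound and lands on the same key mechanism as the paper: use Lemma~\ref{lem:rightmost} to obtain a length-$6$ loop $W$ with a contractible region $R$ on its right, then invoke the minimality of $Y$ to force $R$ to contain the root region, hence $W=W_0$. The executions differ in two places. First, where you build the witnessing set $S$ (faces of $\hM$ for edges of $Q$ strictly inside $R$) and propose a case analysis on the rules of Figure~\ref{fig:transfer} to show its boundary is clockwise in $Y$, the paper dispatches this step by a picture (Figure~\ref{fig:interior}) exhibiting directly a clockwise closed walk of $Y$ just inside $W$; your $S$ is precisely the set of faces enclosed by that walk, so the content is the same. Second, once minimality gives $e_1\in R$ (hence $v_0$ strictly inside $R$, since $v_0$ has all three half-edges outgoing while $o(W)=0$), the paper finishes via the enclosing-hexagon machinery of Section~\ref{sec:link}: $W$ is an enclosing hexagon for $e_1$, $W_0$ is the unique such hexagon (by Claim~\ref{claim:decomp_HQ} and the trivial structure of $R_0$), so $W=W_0$. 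You instead read $W$ as a closed walk in $H$ enclosing $f_0$ and invoke the defining property of $\cH$ directly. Your route is slightly longer (the local-rule case analysis must be written out, and the non-emptiness of $S$ handled, though in fact $S\neq\emptyset$ is automatic since a hexagonal region with only quadrangular faces must contain interior edges) but has the advantage of not relying on Lemma~\ref{lem:maxdisjoint}; the paper's route is terser but leans on the figure and on the structural results from Section~\ref{sec:link}. Your treatment of part~(ii) matches the paper's, with somewhat more detail on why rightmost walks in $X$ coincide with those in $X'$.
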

\begin{proof}
Let $P$ be a rightmost walk in $Q$ and let $W$ be the eventual looping part of~$P$. Lemma~\ref{lem:rightmost} ensures that $W$ has length $6$ and has a contractible region $R$ on its right. 
As shown in Figure~\ref{fig:interior}, within $R$ there has to be a clockwise closed walk of $Y$. Since $Y$ is minimal, the region enclosed by this walk 
has to contain $f_1$ (we choose $f_1$, the same would work with $f_2$ or $f_3$). Hence $R$ contains $e_1$,
so that $W_0$ is an enclosing hexagon for $Q$ where $e_1$ is taken as the root-edge.  
From Claim~\ref{claim:decomp_HQ} 
we know that $W_0$ is the maximal hexagon enclosing $e_1$, and it is directly checked that $W_0$ is 
the unique hexagon enclosing~$e_1$ (indeed within $W_0$ there is just $v_0$ and its $3$ incident edges). 
Hence $W=W_0$. 

\begin{figure}[!h]
\center
\includegraphics[scale=0.4]{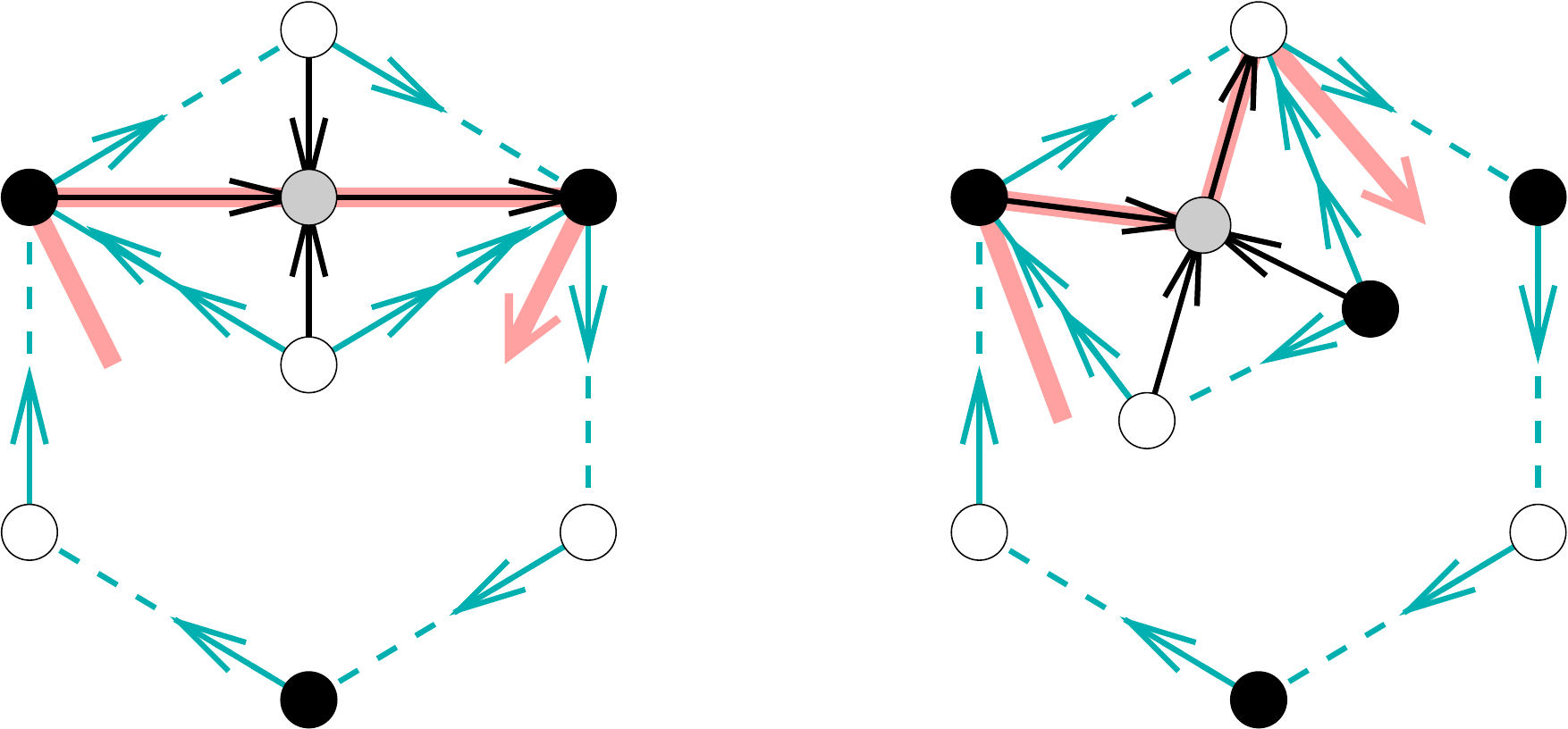}
\caption{Existence of a clockwise closed walk of $Y$ within the eventual looping part (forming a hexagon) of a rightmost walk of $X$, in the proof of
 Lemma~\ref{lem:inO6}.}
\label{fig:interior}
\end{figure}

This also implies that $W_0$ is a rightmost walk, so that $o(W_0)=0$
by Lemma~\ref{lem:comptagecycle}, hence the edges $e_1,e_2,e_3$ are 
simply directed out of $v_0$. Hence $X$ is an S-quad 3-biorientation, and it is in $\cO_6$. 
\end{proof}

\begin{lemma}
Let $U\in\cU$ be the unicellular map associated to $X$ (i.e., obtained from $X$ by deleting the ingoing half-edges).
 Then $U\in\cUbal$. 
\end{lemma}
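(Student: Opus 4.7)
The strategy is to translate the balance condition on $U$ (equal numbers of incident edges on both sides of any non-contractible cycle) into the $\bar\gamma$-balance condition of Lemma~\ref{lem:bal_pseudo_bal} applied to $X'=\sigma^{-1}(Y)$, which is balanced by construction since $Y$ is the minimal balanced Schnyder orientation of $\hM$. I would start by noting that any cycle $C$ of $U$ is automatically non-contractible: since $U$ is unicellular toroidal, a contractible cycle would enclose a region containing a face, contradicting uniqueness of the face. Moreover, since leaves cannot sit on a cycle, the edges of $C$ are plain edges of $U$, hence bidirected in $X$. It follows that $C$ lifts to a non-contractible cycle of $H$, and further to one of $Q$, since the extra vertex $v_0$ and its three incident edges lie strictly inside $f_0$ and are untouched by $C$.

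Orient $C$ arbitrarily and let $2k$ be its length. Since every edge of $C$ is bidirected, each contributes one outgoing half-edge oriented forward along $C$ and one oriented backward, so $o_\uparrow(C)=o_\downarrow(C)=2k$. At each vertex $v$ of $C$, two of the three outgoing half-edges of $v$ are absorbed by the edges of $C$ at $v$, so the third outgoing half-edge at $v$ is off $C$, either on the right or on the left side; hence $o_R(C)+o_L(C)=2k$ and $\bar\gamma(C)=o_R(C)-o_L(C)$. Applying Lemma~\ref{lem:bal_pseudo_bal} to the balanced S-quad $3$-biorientation $X'$ and to the non-contractible cycle $C$ of $Q$ (noting that $X$ and $X'$ agree at all vertices of $C$, so their $\bar\gamma$-scores coincide) gives $\bar\gamma(C)=0$, i.e.\ $o_R(C)=o_L(C)$.

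It then remains to identify $o_R(C)$ (resp.\ $o_L(C)$) with the number of edges of $U$ incident to $C$ on its right (resp.\ left) side. At each vertex $v$ of $C$, the unique outgoing half-edge at $v$ off $C$ corresponds, after deleting ingoing half-edges, to the unique edge of $U$ incident to $v$ and not in $C$: a plain edge if the underlying edge of $X$ is bidirected, or a pending edge if it is simply directed out of $v$. In both cases, this $U$-edge sits on the same side of $C$ as the corresponding outgoing half-edge, so the identification follows. The one step requiring care is precisely this bookkeeping: one must distinguish simply directed edges outgoing \emph{from} $v$ (which leave a pending edge at $v$ in $U$, contributing to the count at $C$) from simply directed edges outgoing \emph{into} $v$ (which leave a pending edge at the opposite endpoint, not at $v$, and are therefore invisible to the left/right count along $C$). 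Once this is in place, $o_R(C)=o_L(C)$ yields the balance of $C$ in $U$, and since this applies to every (non-contractible) cycle of $U$, we obtain $U\in\cUbal$.
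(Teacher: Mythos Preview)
Your proof is correct and follows essentially the same approach as the paper's: a non-contractible cycle $C$ of $U$ is a cycle of bidirected edges in $X$ (equivalently in $X'$ on $Q$), the balance of $X'$ gives $\bar\gamma(C)=0$ via Lemma~\ref{lem:bal_pseudo_bal}, and since the edges of $C$ are bidirected this reduces to $o_R(C)=o_L(C)$, which translates directly into equal numbers of $U$-edges on both sides of $C$. The paper's version is terser and writes ``$X$ is balanced'' where you more carefully invoke $X'$, but the argument is the same; your explicit bookkeeping of the three types of off-$C$ edges (bidirected, simply directed out of $v$, simply directed into $v$) just spells out what the paper compresses into one sentence.
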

\begin{proof}
Note that any non-contractible cycle $C$ of $U$
is a non-contractible cycle of bidirected edges in $X$. Since $X$ is balanced we have $\bar\gamma(C)=0$. 
Since all the edges on $C$ are bidirected, this implies that $C$ has as many incident outgoing half-edges on both sides (in $X$). Hence, in $U$, the cycle $C$ has as many incident half-edges on both sides. Hence $U\in\cUbal$.  
\end{proof}

\subsubsection{Proof of uniqueness in Lemma~\ref{lem:suffi}} 
Let $H\in \cH$ and let $X$ be an S-quad 3-biorientation of $H$ in $\cO_6$, and such that the 
unicellular map $U$ obtained from $X$ by deleting the ingoing half-edges is in $\cUbal$. 
We are going to show that $X$ has to be the canonical biorientation of $H$. 
Let $Q$ be obtained from $H$ by adding a black vertex $v_0$ within the hexangular face, and connecting it to the vertices at the 3 white corners. Let $M$ be the map whose angular map is $Q$, and let $\hM$ be the derived map of $M$. Let $X'$ be the S-quad 3-biorientation of $Q$ that is the same as $X$, with the $3$ additional 
edges simply directed out of $v_0$. Let $C_1,C_2$ be two non-homotopic non-contractible cycles of $U$. Since 
$U\in\cUbal$, for $C\in\{C_1,C_2\}$ the cycle $C$ has as many incident half-edges on both sides. Since $U$
is obtained from $X$ by deleting ingoing half-edges, the cycle $C$ is made of bidirected edges and 
has as many incident outgoing half-edges on both sides, hence $\bar\gamma(C)=0$.
 By Lemma~\ref{lem:bal_pseudo_bal} this ensures that $X'$ is balanced.

Let $Y=\sigma(X')$ be the associated Schnyder orientation of $\hM$. Note that $Y$ is balanced, since by definition $X'$ is balanced
iff $Y$ is balanced. Let $f_0$ be one of the 3 faces of $\hM$ incident to $v_0$, which we take as the root-face 
of $\hM$.  
In order to prove that $X$ is the canonical biorientation of $H$, it just remains to show that $Y$ is minimal
 with respect to~$f_0$. 

\begin{figure}
\center
\includegraphics[scale=0.4]{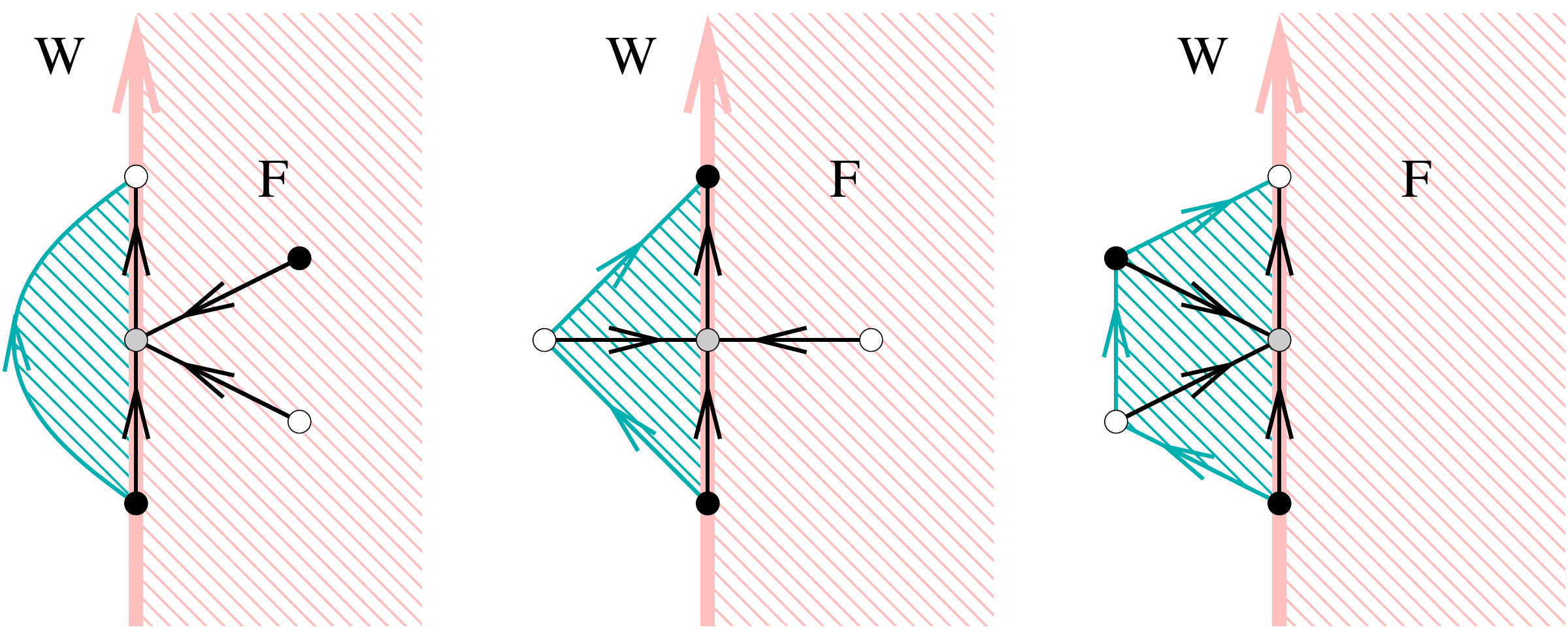}
\caption{Orientation of the edges of the expansion, on the left side of an oriented walk of the derived map.}
\label{fig:nonminimal}
\end{figure}

Suppose by contradiction that $Y$ is non-minimal, i.e., there
exists a non-empty set $F$ of faces such that $f_0\notin F$ and every
edge on the boundary $\partial F$ of $F$ has a face in $F$ on its right (and a
face not in $F$ on its left). 
Walking on $\partial F$, we have an alternation of 
 vertices that are not edge-vertices (i.e., primal or dual vertices) and edges-vertices. 
One can easily see that each pair of consecutive oriented edges on $\partial F$, located before and
after an edge-vertex, corresponds to an oriented path of length
1, 2, or 3 of $Q$ on the left side of $\partial F$, see
Figure~\ref{fig:nonminimal}, where the first vertex of the two
consecutive edges of $W$ is a dual vertex (the cases where this
vertex is a primal vertex are completely analogous). 
 Let $F'$ be the set of faces of $Q$ corresponding to edge-vertices of $Y$ that are incident
to a face of $F$.  
Note that $F'$ is the union of the 
region delimited by $F$, depicted in red on
Figure~\ref{fig:nonminimal}, plus the union of all the regions
depicted in green on
Figure~\ref{fig:nonminimal}. Hence every edge on the boundary of $F'$ (on the left boundary of a green region) 
is either bidirected or 
simply directed with $F'$ on its right. Since $f_0\notin F$ and $v_0$ is a source in $Y$, the vertex $v_0$ can not
be incident to any face in $F'$, hence $F'$ avoids the 3 faces of $Q$ incident to $v_0$, 
and thus $F'$ can be seen as a set of faces of $H$ that does not contain the hexangular face. Hence, if we let $\tilde{X}$ 
be the orientation obtained from $X$ by turning every bidirected edge into a double edge forming a clockwise cycle
(enclosing a face of degree $2$), 
then $\tilde{X}$ is not minimal (with respect to the hexagonal face).  
By Lemma~16 in~\cite{FL18} we conclude that $X\notin\cO_6$, a contradiction.  

\section{Proof that $\phi$ is the inverse of $\psi$}\label{sec:proof_inverse} 
Since we know that $\phi$ is a bijection from $\cH$ to $\cUbal$, showing that $\psi$ is its inverse
amounts to showing that for $U\in\cUbal$, we have that $H:=\psi(U)$ is in $\cH$ and $\phi(\psi(U))=U$. 

\begin{figure}
 \center
 \includegraphics[scale=0.8]{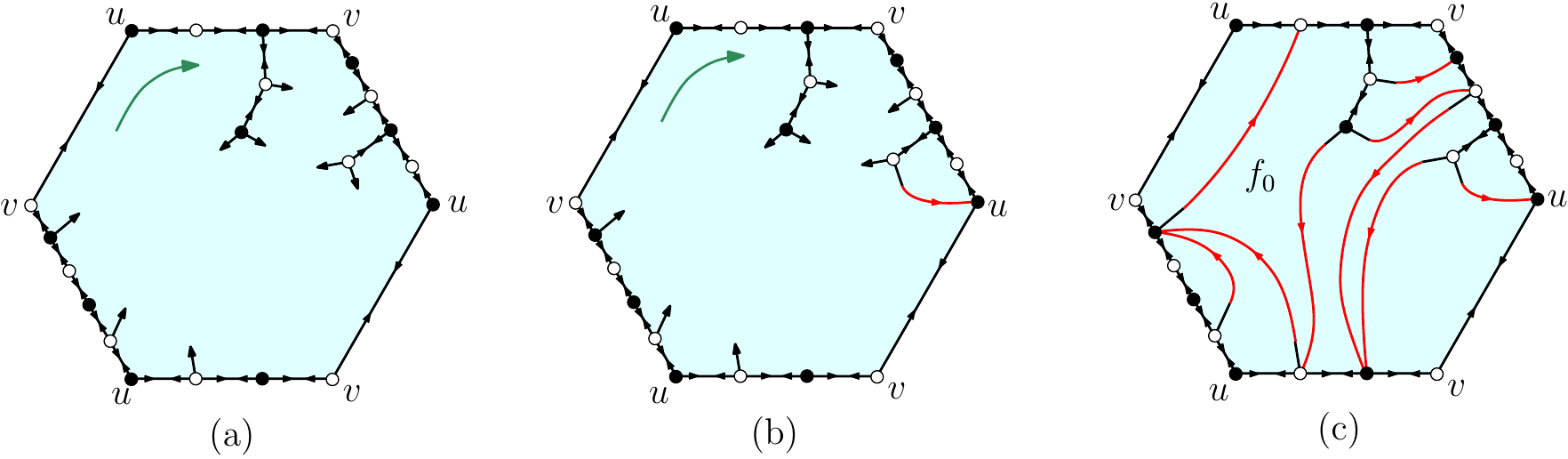}
 \caption{The closure-mapping and the induced biorientation.}
 \label{fig:closure2}
 \end{figure}

Similarly as in the planar case~\cite{FuPoScL}, we note that $H$ inherits from $U$ a  biorientation~$X$. 
Precisely, in $U$ we biorient every plain edge, and we simply orient every pending edge  
toward its incident leaf (which gets merged in a local closure operation, see Section~\ref{sec:bij_psi}), 
see Figure~\ref{fig:closure2}.

Let $m$ be the number of leaves of $U$. We perform the $m$ local closures (one for each leaf) 
in a given (arbitrary) order. For $t\in[0..m]$ we let 
$X_t$ be the bioriented map obtained from the figure after $t$ local closures, where the $(m-t)$ pending edges    
are deleted. It is easy to see that, for $t$ from $0$ to $m$, the root-face contour of $X_t$ remains a rightmost walk (its ccw-degree remains zero) and that $X_t\in\cO_{2(m-t)+6}$. Hence $X:=X_m$ is in $\cO_6$. It is also clear that the final outdegree of each vertex is the same as its degree in $U$, hence is $3$, and each closed quadrangular face gets ccw-degree $1$. Moreover, $H$ is bipartite and 6-quadrangular by construction. Hence
$X\in\cX$. This guarantees that $H\in\cH$, by Lemma~\ref{lem:necess}.    
Note also that $U$ is recovered from $X$ by deleting the ingoing half-edges (which undoes the closure-operations). 
Since $U\in\cUbal$, we conclude from Lemma~\ref{lem:suffi} that $X$ is the canonical biorientation 
of $H$ and that $\phi(H)=U$.

\bibliographystyle{plain}
\bibliography{bibli}

\end{document}